\DeclareMathAlphabet{\mathpzc}{OT1}{pzc}{m}{it}
\numberwithin{equation}{section}
\theoremstyle{plain}
\newtheorem{theorem}{Theorem}[section]
\newtheorem{corollary}[theorem]{Corollary}
\newtheorem{lemma}[theorem]{Lemma}
\theoremstyle{definition}
\newtheorem{definition}[theorem]{Definition}
\newtheorem{remark}[theorem]{Remark}
\theoremstyle{remark}
\newcommand{\R}{\mathbb{R}}
\newcommand{\Q}{\mathbb{Q}}
\newcommand{\Z}{\mathbb{Z}}
\newcommand{\N}{\mathbb{N}}
\newcommand{\C}{\mathbb{C}}
\renewcommand{\H}{\mathbb{H}}
\newcommand{\zxz}[4]{\begin{pmatrix} #1 & #2 \\ #3 & #4 \end{pmatrix}}
\newcommand{\leg}[2]{\left( \frac{#1}{#2} \right)}
\newcommand{\kzxz}[4]{\left(\begin{smallmatrix} #1 & #2 \\ #3 & #4\end{smallmatrix}\right) }
\newcommand{\im}{\operatorname{Im}}
\newcommand{\re}{\operatorname{Re}}
\newcommand{\calG}{\mathcal{G}}
\newcommand{\calH}{\mathcal{H}}
\newcommand{\calQ}{\mathcal{Q}}
\newcommand{\fraka}{\mathfrak a}
\newcommand{\frakb}{\mathfrak b}
\newcommand{\frake}{\mathfrak e}
\newcommand{\bs}{\backslash}
\newcommand{\tr}{\operatorname{tr}}
\newcommand{\SL}{\operatorname{SL}}
\newcommand{\GL}{\operatorname{GL}}
\newcommand{\Sp}{\operatorname{Sp}}
\newcommand{\Sym}{\operatorname{Sym}}
\newcommand{\diag}{\operatorname{diag}}
\newcommand{\sig}{\operatorname{sig}}
\newcommand{\id}{\operatorname{id}}
\begin{document}

\title[The standard zeta function attached to a vector valued modular form ]{On analytic properties of the standard zeta function attached to a vector valued modular form}
\author{Oliver Stein}
\address{Fakult\"at f\"ur Informatik und Mathematik\\ Ostbayerische Technische Hochschule Regensburg\\Galgenbergstrasse 32\\93053 Regensburg\\Germany}
\email{oliver.stein@oth-regensburg.de}
\thanks{On behalf of all authors, the corresponding author states that there is no conflict of interest.}
\subjclass[2020]{11F27 11F25 11F66 11M41}

\begin{abstract}
  We proof a Garrett-B\"ocherer decomposition of a vector valued Siegel Eisenstein series $E_{l,0}^2$ of genus 2 transforming with the Weil representation of $\Sp_2(\Z)$ on the group ring $\C[(L'/L)^2]$. We show that the standard zeta function associated to a vector valued common  eigenform $f$ for the Weil representation can be meromorphically continued to the whole $s$-plane and that it satisfies a functional equation. The proof is based on an integral representation of this zeta function in terms of $f$ and $E_{l,0}^2$. 
\end{abstract}

\maketitle

\section{Introduction}
Vector valued modular forms transforming with the Weil representation   play a prominent role in the theory of Borcherds products, see e. g. \cite{Bor} or \cite{Br1}. Since then, a lot of research  regarding this type of modular forms has been done. Among other things,  a theory of newforms  was developed (\cite{Br2}). In \cite{BS} the foundations of a theory of  Hecke operators was laid. The Fourier coefficients of Eisenstein series were calculated in \cite{BK} and the analytic properties of their non-holomorphic versions were studied in detail in \cite{St}. Also, several relations between these modular forms and  scalar valued modular forms have been established (see e. g. \cite{Sch1}).   One important part of the theory of modular forms that has not yet been addressed (to the best of knowledge) is their relation to $L$-functions. We can find in the literature several ways to associate an $L$-function to a modular form (see e. g.  \cite{Bo1}, \cite{BM}, \cite{PSR}). Such $L$-functions are a central object in number theory and were studied extensively in several aspects among them their analytic properties.  The present paper can be seen as a contribution to these investigations. Its main objective is  to examine the  analytic properties of a certain $L$-function  of a vector valued  Hecke eigenform. 

To describe the main results more closely, we introduce some notation. Let $L$ be non-degenerate even lattice of type $(b^+, b^-)$ and level $N$, equipped with a bilinear form  and associated quadratic form $q$. By $L'$ we denote the dual lattice of $L$. We assume further that the rank and the signature $\sig(L)=b^+-b^-$ of $L$ is even. The Weil representation $\rho_{L,1}$ associated to $L$ is a unitary representation of $\Gamma_1 = \SL_2(\Z)$ on the group ring $\C[L'/L]$,
\[
\rho_{L,1}: \Gamma_1\rightarrow \GL(\C[L'/L]).
\]
For reasons, which will become apparent later,  we introduce a more general  Weil representation $\rho_{L,n}$ of the symplectic group $\Gamma_n = \Sp(n, \Z)$ on the group ring $\C[(L'/L)^n]$ (cf. Definition \ref{def:weil_finite_repr} in Chapter \ref{sec:finite_weil_rep}).

Now let $l\in \Z$ be even. A holomorphic function $f: \H\rightarrow \C$ is called a vector valued modular form of weight $k$ and type $\rho_{L,1}$ for the group $\Gamma_1$ if
\[
f(\gamma\tau) = (c\tau+d)^k\rho_{L,1}(\gamma)f(\tau)
\]
for all $\gamma = \kzxz{a}{b}{c}{d}\in \Gamma$, and $f$ is holomorphic at the cusp $\infty$. The space of all such functions is denoted with $M_l(\rho_{L,1})$. For the subspace of cusp forms we write $S_l(\rho_{L,1})$.  For the details see Chapter \ref{sec:vec_val_mfs_eisenstein_series}.

As already mentioned above, in \cite{BS} a Hecke operator $T(M)$ for $M\in GL_2^+(\Q)$ was introduced by the action of a suitable Hecke algebra. Based on this definition, it is possible to recover the main results of the classical Hecke theory. In particular, it is proved that $S_l(\rho_{L,1})$ possesses a basis of common Hecke eigenforms of all  Hecke operators $T\kzxz{d^2}{0}{0}{1}$ for all $d\in \N$ coprime to the level $N$ of $L$. For such an eigenform we have 
\[
f\mid_{k,L}T\kzxz{d^2}{0}{0}{1} = \lambda_d(f)f
\]
for $(d,N)=1$.  The authors in \cite{BS} proposed to associate to $f$ an an $L$-function of the form 
\begin{equation*}
L^N(s, f) = \sum_{\substack{d\in \N\\(d,N)=1}}\lambda_d(f)d^{-s}.
\end{equation*}
$L$-functions of this type were considered in many places in the literature (cf. for example \cite{Bo1}, \cite{Ar} or \cite{BM}). It was suggested in \cite{BS} to establish the usual analytic properties, that is, the meromorphic continuation to the whole $s$-plane and a functional equation of $L^N(s,f)$ by means of a variant of the doubling method (\cite{Bo1}, \cite{Ga} or \cite{PSR}). This was the starting point of the present paper. Its  main content  is the proof of the meropmorphic continuation and a function equation of a modified version of $L^N(s,f)$.

More precisely, we assume that there is a common Hecke eigenform $f$ for {\it all} Hecke operators $T\kzxz{d^2}{0}{0}{1}$, $d\in \N$. This assumption is justified if a multiplicity one theorem (see e. g.  \cite{Mi}, $\S$4.6) is in place. The question under what conditions such a theorem for vector valued modular forms for the Weil representation holds was investigated in \cite{We}. It turns out that a multiplicity one theorem is valid if  the Weil representation decomposes  into irreducible subrepresentations, each subrepresentation occurring with multiplicity one. In Remark \ref{rem:multiplicity_one} we discuss more detailed under what circumstances $\rho_{L,1}$ allows such a decomposition. Under these assumptions we assign to $f$ the $L$-function
\begin{equation}\label{eq:mod_standard_L_function}
Z(s,f) = \sum_{d\in \N}\lambda_d(f)d^{-s},
\end{equation}
which we call according to \cite{Ar} a {\it standard zeta function}.

To study the analytic properties of \eqref{eq:mod_standard_L_function}, we develop a doubling method along the lines of \cite{Bo1} and \cite{Ga}, tailored to the vector valued setting. Note that there are papers addressing this topic for vector valued Siegel modular forms (\cite{Ko1}, \cite{Ko2}). However, our approach deals with $\rho_{L,1}$ as representation requiring  methods, specifically adapted to this representation, causing several technical difficulties along the way. 
We obtain the following pullback formula:
\begin{theorem}
  Let $d\in \Z$ be a positive integer, $D=\kzxz{d}{0}{0}{d^{-1}}$ and $E_{k,0}^{2*}$ be  as in Definition \ref{def:eisenstein_nonholom}.  Then for all $\tau, \zeta\in \H$
  \begin{align*}
      &  E_{l,0}^{2*}(\kzxz{\tau}{0}{0}{\zeta},s) = \\
&      E_{l,0}^{1*}(\tau,s)\otimes E_{l,0}^{1*}(\zeta,s) + \frac{e(\sig(L)/8)}{|L'/L|^{1/2}}\sum_{d\ge 1}\frac{g_d(L)}{g(L)}d^{-l-2s}\mathscr{P}^+_l(-\tau,\zeta,D,s).
    \end{align*}  
\end{theorem}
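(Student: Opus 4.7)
The plan is to imitate the classical Garrett--B\"ocherer pullback computation in the vector-valued setting: unfold $E_{l,0}^{2*}(\cdot,s)$ as a sum over $\Gamma_{2,\infty}\backslash\Gamma_2$ and partition the index set into double cosets modulo the block-diagonal embedding $\Gamma_1\times\Gamma_1\hookrightarrow\Gamma_2$. Each double coset should then contribute exactly one of the two summands on the right-hand side once one restricts to the point $\kzxz{\tau}{0}{0}{\zeta}$ on the Siegel upper half-space.

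First I would fix a system of representatives $\{\delta_d\}_{d\ge 0}$ for $\Gamma_{2,\infty}\backslash\Gamma_2/(\Gamma_1\times\Gamma_1)$. In the scalar Siegel setting this is classical, with $\delta_0 = 1_4$ labelling the ``identity'' double coset and, for each $d\ge 1$, a representative built from the Weyl element $J_2 = \kzxz{0}{-1_2}{1_2}{0}$ together with a block of the form $D = \kzxz{d}{0}{0}{d^{-1}}$. The $\delta_0$-coset, after partial unfolding along each $\SL_2$-factor, produces the contribution $E_{l,0}^{1*}(\tau,s)\otimes E_{l,0}^{1*}(\zeta,s)$, provided that the restriction of $\rho_{L,2}$ along $\Sp_1\times\Sp_1\hookrightarrow\Sp_2$ agrees with $\rho_{L,1}\otimes\rho_{L,1}$ under the canonical isomorphism $\C[(L'/L)^2]\cong\C[L'/L]\otimes\C[L'/L]$. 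This compatibility reduces, via the generators of the Siegel parabolic, to a direct verification using Definition~\ref{def:weil_finite_repr}. For $d\ge 1$, after unfolding the $\delta_d$-coset over the remaining cosets in $\Gamma_1\times\Gamma_1$, the $s$-twisted automorphy factor yields the weight $d^{-l-2s}$ and assembles the archimedean data into the Poincar\'e-type kernel $\mathscr{P}^+_l(-\tau,\zeta,D,s)$ by its very definition.

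The principal technical obstacle is the explicit evaluation of the finite Weil-representation cocycle $\rho_{L,2}(\delta_d)$. Writing $\delta_d$ as a product of a Siegel-parabolic element and $J_2$ and expanding through the Bruhat-cell formula of Definition~\ref{def:weil_finite_repr}, one encounters a Gauss sum over $(L'/L)^2$. The key step is to decouple this sum using the block form of the $D$-piece: the sum factors into a Gauss sum for the lattice $L$ scaled by $d$ divided by a Gauss sum for $L$ itself, giving precisely the ratio $g_d(L)/g(L)$. The additional constant $e(\sig(L)/8)/|L'/L|^{1/2}$ is the standard central factor produced each time the Weyl element $J_2$ acts through $\rho_{L,2}$; it appears only for $d\ge 1$ because the $d=0$ contribution comes entirely from Siegel-parabolic elements on which no such factor arises. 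Once these coset-by-coset evaluations are matched with the archimedean pieces, the two summands on the right-hand side of the claimed identity are obtained.
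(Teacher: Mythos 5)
Your overall strategy coincides with the paper's: Garrett's double coset decomposition of $\Gamma_{2,0}\backslash\Gamma_2/l_{1,1}(\Gamma_1\times\Gamma_1)$, partial unfolding, the identity coset giving $E_{l,0}^{1*}\otimes E_{l,0}^{1*}$ via the compatibility $\rho_{L,2}(\gamma^\uparrow)$, $\rho_{L,2}(\gamma^\downarrow)$ versus $\rho_{L,1}(\gamma)\otimes 1$, $1\otimes\rho_{L,1}(\gamma)$ (Lemma \ref{lem:embedding_weil}), and the remaining cosets assembling into the Poincar\'e series. However, the sketch has concrete gaps precisely at the step you yourself identify as the principal obstacle. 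First, you never explain how the argument $-\tau$ in $\mathscr{P}^+_l(-\tau,\zeta,D,s)$ arises: the unfolded inner sum over $\gamma\in\Gamma_1$ does not match the definition of $\mathscr{P}^+_l$ until one performs the substitution $\gamma\mapsto S^{-1}\widetilde{\gamma}$, which simultaneously converts $\gamma\tau$ into $\gamma(-\tau)$ and inserts one application of $\rho_{L,1}^{*}(S)$. That single genus-one $S$-action, combined with Milgram's formula, is what produces the constant $e(\sig(L)/8)/|L'/L|^{1/2}$; your attribution of this constant to ``the Weyl element $J_2$ acting through $\rho_{L,2}$'' is incorrect in detail, since the genus-two Weyl element would contribute $e(-\sig(L)/4)/|L'/L|$, and the representatives in $\mathscr{M}_1$ are the unipotent elements $U_2\kzxz{0}{d}{d}{0}$, not Weyl-type elements.

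Second, the mechanism you propose for the factor $g_d(L)/g(L)$ does not work as stated. The evaluation of $\rho_{L,2}^{-1}(U_2\kzxz{0}{d}{d}{0})$ on $\frake_0\otimes\frake_0$ (Lemma \ref{lem:weil_gm}) yields a character sum that collapses to an index shift $\frake_\varrho\mapsto\frake_{d\varrho}$ in the second tensor factor; no quotient of Gauss sums appears at that stage. The ratio $g_d(L)/g(L)$ enters only afterwards, when one rewrites $\rho_{L,1}^{*-1}(M)\frake_{d\varrho}$ as $\frac{g_d(L)}{g(L)}\rho_{L,1}^{*-1}(DM)\frake_{\varrho}$ using the extended action \eqref{eq:extended_weil_D} and \eqref{eq:extended_weil_D_prime} of the double coset $\Gamma_1 D\Gamma_1$ --- a normalization forced by the definition of $\mathscr{P}^+_l(\cdot,\cdot,D,s)$ as a sum over $\Gamma_1\backslash\Gamma_1 D\Gamma_1$. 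This step is also where the passage from left cosets $\Gamma_1\kzxz{0}{d^{-1}}{d}{0}\backslash\Gamma_1$ in Garrett's parametrization to $\Gamma_1\backslash\Gamma_1 D\Gamma_1$ must be justified. Without these two ingredients the right-hand side cannot be matched term by term, so the proposal as written does not yet constitute a proof.
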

Here $E_{l,0}^{2*}$ is the non-holomorphic vector valued Siegel Eisenstein series of type $\rho^*_{L,2}$,  $\mathscr{P}^+_L$ means a certain vector valued Poincar\'e series attached to the discriminant form $(L'/L)^2$ (analogous to the Poincar\'e series in \cite{Bo1}) and $E_{l,0}^{1*}$ is the non-holomorphic vector valued Eisenstein series of type $\rho^*_{L,1}$, where $\rho^*_{L,1}$ and $\rho^*_{L,2}$  denotes the dual representation of $\rho_{L,1}$ and $\rho_{L,2}$, respectively.

Based on this theorem, we then  provide an integral representation of $Z(s,f)$ in terms of the Siegel Eisenstein series $E_{l,0}^{2}$:
\begin{theorem}
  Let $l\in 2\Z$, $l\ge 3$, satisfy $2l+\sig(L)\equiv 0\bmod{4}$.  Let $f\in S_l(\rho_{L,1})$ a common eigenform of Hecke operators $T\kzxz{d^2}{0}{0}{1}$ and $E_{l,0}^{2}$ the Eisenstein series in Definition \ref{def:eisenstein_nonholom} below. If $l+2\re(s)>3$, then, for any $\zeta\in \H$,
  \begin{align*}
    & \sum_{\lambda\in L'/L}\left(\int_{\Gamma_1\bs \H}\langle f(\tau)\otimes \frake_\lambda,E_{l,0}^{2}(\kzxz{\tau}{0}{0}{-\overline{\zeta}},\overline{s})\rangle_{2} \im(\tau)^{l}d\mu(\tau)\right)\frake_\lambda \\
    & = K(l,s)\left(\sum_{d\in \N}\lambda_d(f)d^{-l-2s}\right)f(\zeta).
    \end{align*}
\end{theorem}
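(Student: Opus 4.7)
The plan is to substitute the Garrett--B\"ocherer decomposition of the previous theorem into the integral defining the left-hand side and then isolate the piece that encodes $Z(s,f)$. First I would pass from the statement of the previous theorem, which describes $E_{l,0}^{2*}$, to one for $E_{l,0}^{2}$ via the canonical identification of $\rho_{L,2}$ with the complex-conjugate dual of $\rho^{*}_{L,2}$; this translates the pullback identity so that it applies to $E_{l,0}^{2}(\kzxz{\tau}{0}{0}{-\overline\zeta},\overline s)$. Because the Hermitian pairing on $\C[(L'/L)^2]$ factors as
\[
\langle a\otimes b,\,c\otimes d\rangle_2 \;=\; \langle a,c\rangle_1\,\langle b,d\rangle_1,
\]
plugging the decomposition into the left-hand side splits the integral into an Eisenstein tensor-product contribution and a sum over $d\ge 1$ of Poincar\'e-series contributions, one for each $D=\diag(d,d^{-1})$.

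The Eisenstein tensor term vanishes. After separating the $\zeta$-factor from the $\tau$-factor, its $\tau$-part reduces to
\[
\int_{\Gamma_1\bs\H}\langle f(\tau),\,E_{l,0}^{1}(\tau,\overline s)\rangle_1\,\im(\tau)^{l}\,d\mu(\tau),
\]
and the standard unfolding of $E_{l,0}^{1}$ against the cusp form $f$ collapses this to an integral of the zeroth Fourier coefficient of $f$ over a strip $\Gamma_\infty\bs\H$. Since $f\in S_l(\rho_{L,1})$, the constant term in every $\frake_\mu$-component vanishes, so the integral is zero.

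The heart of the argument is the Poincar\'e series term. Writing $\mathscr{P}_l^{+}(\tau,-\overline\zeta,D,\overline s)$ as a sum of its seed over cosets of $\Gamma_\infty\bs \Gamma_1$, unfolding the $\tau$-integral opens up the fundamental domain to the strip; after the resulting change of variables and using the transformation law of $f$ under $\rho_{L,1}$, the expression becomes, for each fixed $d$, the $\zeta$-evaluation of $f\mid_{l,L}T\kzxz{d^2}{0}{0}{1}$ multiplied by an explicit factor that absorbs the seed integral and the prefactors $g_d(L)/g(L)$, $e(\sig(L)/8)$ and $|L'/L|^{-1/2}$ from the pullback formula. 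The eigenform hypothesis replaces $f\mid_{l,L}T\kzxz{d^2}{0}{0}{1}$ by $\lambda_d(f)\,f$, and combining with the $d^{-l-2\overline s}$ already present in the pullback formula and summing in $d$ produces $\sum_{d\ge 1}\lambda_d(f)d^{-l-2s}=Z(l+2s,f)$. The remaining $d$-independent normalisers are then bundled into the constant $K(l,s)$, and the conjugation in $\overline s$ is reabsorbed by the pairing so the final answer appears in $s$, yielding the right-hand side.

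The main obstacle is the identification, in the third step, of the unfolded Poincar\'e seed at $D=\diag(d,d^{-1})$ with the double coset $\Gamma_1 D \Gamma_1$ defining $T\kzxz{d^2}{0}{0}{1}$ in \cite{BS}, together with the check that the action of $\rho_{L,1}$ on both sides agrees on the relevant coset representatives; this is where the specifics of the vector-valued Hecke theory genuinely enter and will require careful bookkeeping. All interchanges of summation and integration are legitimised by the hypothesis $l+2\re(s)>3$, which is precisely the absolute convergence range of the genus-two Siegel Eisenstein series appearing in the pullback formula.
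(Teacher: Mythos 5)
Your proposal is correct and follows essentially the same route as the paper: substitute the pullback formula, kill the Eisenstein tensor term by unfolding against the cusp form, and use the reproducing property of $\mathscr{P}_l^+$ to produce $f\mid_{l,L}T\kzxz{d}{0}{0}{d^{-1}}$, then convert to $T\kzxz{d^2}{0}{0}{1}$ via the Gauss-sum relation \eqref{eq:rel_hecke_T_D}, whose factor $g(L)/g_d(L)$ cancels the $g_d(L)/g(L)$ in the pullback formula. The "careful bookkeeping" you flag is exactly what the paper delegates to Theorem \ref{thm:properties_poincare}(ii) and \eqref{eq:rel_hecke_T_D}, so the proof itself is indeed just an assembly of those ingredients.
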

Now the analytic properties of $E_{l,0}^2$ with respect to $s$ can be transferred to $Z(s,f)$. Note that  for the Siegel Eisenstein series $E_{l,0}^n$ of genus $n$ and type $\rho_{L,n}$ these properties were established in \cite{St}. The functional equation for $E_{l,0}^n$ is rather complicated. Corollary \ref{cor:functional_eq_E_2} specializes it to the case $n=2$, leading to a quite clean formula.

We finish this section with an overview of the subsequent chapters:
After some notations and preliminaries, we introduce in Chapter \ref{sec:finite_weil_rep} the Weil representation $\rho_{L,n}$ for the Siegel modular group $\Sp(n,\Z)$. We prove several relations between $\rho_{L,2}$ and $\rho_{L,1}$, which are vital for the pullback formula above.
The next chapter deals with vector valued Siegel modular forms transforming with $\rho_{L,n}$, the Eisenstein series $E_{l,0}^n$ and certain vector valued Poincar\'e series, the main ingredients of the above stated formulas. After some general remarks, we switch to the case $n=1$ and give a brief account to Hecke operators and explain how to extend their definition to double cosets $\Gamma_1\kzxz{d}{0}{0}{d^{-1}}\Gamma_1$ if $d$ is not coprime to the level $N$.
The second part of this chapter defines and studies the Eisenstein series $E_{l,0}^n$. In particular, we prove a functional equation for $n=2$ and explain how to assign to $E_{l,0}^n$ a value on the Siegel lower half space, which is a necessary technical detail to deduce the main theorems of the paper.
Finally, two vector valued Poincar\'e series are introduced, one of them analogous to a series introduced in \cite{Bo1}, the other related to the first. The main results here are the reproducing formulas in Theorem \ref{thm:properties_poincare}.
In Chapter 5 we derive the Garrett-B\"ocherer pullback formula in our setting. The proof follows the one of \cite{Ga}. In each relevant step the corresponding calculations for $\rho_{L,2}$ have to be included, causing several technical difficulties.
The last chapter then collects all results established before and presents the desired analytic properties of $Z(s,f)$.

\section{Notation and preliminaries}\label{sec:notation}
We use the symbol $e(x)$, $x\in \C$ as an abbreviation for $e^{2\pi ix}$. As usual, by $\overline{z}$ we mean the conjugate complex number of $z$. In this paper, $n$, $m$ are always natural numbers. For any ring $R$, $M_{m,n}(R)$, $R^n$ and $\Sym_n(R)$ are the set of $m\times n$ matrices, the set of row vectors of size $n$ and the set of symmetric matrices in $M_{n,n}(R)$. We write $1_n$ and $0_n$ for the unit matrix and zero matrix of size n, respectively. Moreover, by $A^t$ and $\tr(A)$ we denote the transposed matrix and the trace  of $A$. Also, for any matrix $S\in \Sym_n(\R)$ we write $S>0$ (resp. $S\ge 0)$ if $S$ is positive definite (resp. semi positive definite).  
By $\Sp(n)$ we denote the symplectic group of genus $n$.  We use the symbol $\Gamma_n$ for $\Sp(n,\Z)$. In particular, we have $\Gamma_1 = \SL_2(\Z)$. For $N\in \N$ 
\begin{align*}
 \Gamma_n(N):= \left\{\gamma\in \Gamma_n\;|\; \gamma\equiv 1_n\bmod{N}\right\}
\end{align*}
is the principal congruence subgroup of $\Gamma_n$.  The subgroups
\begin{equation}\label{def:gamma_n_r}
  \Gamma_{n,r}:=\left\{\kzxz{a}{b}{c}{d}\in \Gamma_n\; :\; a = \kzxz{a_1}{0}{a_3}{a_4}, b=\kzxz{b_1}{b_2}{b_3}{b_4},\; c=\kzxz{c_1}{0}{0}{0},\; d=\kzxz{d_1}{d_2}{0}{d_4}\right\},
\end{equation}
where $0\le r\le n$ and  $*_1\in M_{r,r}(\Z), *_2\in M_{r,n-r}(\Z), *_3\in M_{n-r,r}(\Z),$ and $*_4\in M_{n-r,n-r}(\Z)$ are part of the definition of Siegel Eisenstein series. As special cases we have $\Gamma_{n,n} = \Gamma_n$ and 
\begin{equation}\label{def:gamma_infty}
\Gamma_{n,0} = \left\{\kzxz{a}{b}{c}{d}\in \Gamma_n\; |\; c = 0_n\right\}.
\end{equation}

Further,  by $\Gamma_1\kzxz{0}{d^{-1}}{d}{0}$ we mean the subgroup
\begin{align*}
  &\Gamma_1\kzxz{0}{d^{-1}}{d}{0}=\zxz{0}{d^{-1}}{d}{0}\Gamma_1\zxz{0}{d^{-1}}{d}{0}\cap \Gamma_1, 
\end{align*}
where $d$ is a positive integer. 
Note that $\Gamma_1\kzxz{0}{d^{-1}}{d}{0}$ is isomorphic to
\begin{equation}\label{eq:congruence_subgroup}
  \Gamma_1(d) = \kzxz{d}{0}{0}{d^{-1}}^{-1}\Gamma_1\kzxz{d}{0}{0}{d^{-1}}\cap \Gamma_1
\end{equation}
and as a consequence, $\Gamma_1(\kzxz{0}{d^{-1}}{d}{0})\bs \Gamma_1 \cong \Gamma_1(d)\bs\Gamma_1$. The isomorphism is given by
\begin{equation}\label{eq:isomorphism_congruence_subgroup}
\gamma \mapsto \ell(\gamma)=\kzxz{0}{1}{-1}{0}\gamma^{-1}\kzxz{0}{-1}{1}{0}.  
  \end{equation}
It is immediate that $\ell$ is an involution.

The group $\Gamma_m\times \Gamma_n$ can be embedded into $\Gamma_{n+m}$ by the map 
\begin{equation}\label{eq:embed_sl}
   l_{m,n}: \Gamma_m\times \Gamma_n\rightarrow \Gamma_{n+m}, \quad l_{m,n}\left(\begin{pmatrix}a&b\\c&d\end{pmatrix},\begin{pmatrix}a'&b'\\c'&d'\end{pmatrix}\right)=\begin{pmatrix}a&0&b&0\\ 0&a'&0&b'\\c&0&d&0\\0&c'&0&d'\end{pmatrix}.
\end{equation}

Via $l_{m,n}$ we can embed $\Gamma_m$ and $\Gamma_n$ into $\Gamma_{n+m}$:

\begin{equation}\label{eq:embedding_1}0
  \uparrow: \Gamma_m\rightarrow \Gamma_{n+m},\quad
  \begin{pmatrix}a&b\\c&d\end{pmatrix}\mapsto \zxz{a}{b}{c}{d}^{\uparrow}=l_{m,n}\left(\begin{pmatrix}a&b\\c&d\end{pmatrix},\begin{pmatrix}1&0\\0&1\end{pmatrix}\right),
\end{equation}
\begin{equation}\label{eq:embedding_2}
\downarrow:    \Gamma_n\rightarrow \Gamma_{n+m},\quad  \begin{pmatrix}a&b\\c&d\end{pmatrix}\mapsto \zxz{a}{b}{c}{d}^{\downarrow}=l_{n,m}\left(\begin{pmatrix}1&0\\0&1\end{pmatrix},\begin{pmatrix}a&b\\c&d\end{pmatrix}\right).
\end{equation}

Denote with $\H_n:=\{\tau\in M_{n,n}(\C)\; : \; \tau^t = \tau\; \text{ and } \im(\tau)> 0\}$ the Siegel upper half space of genus $n$. It is well known that the group $\Gamma_n$ acts on $\H_n$ by
\[
\left(\begin{pmatrix}a&b\\c&d\end{pmatrix},\tau\right)\mapsto \begin{pmatrix}a&b\\c&d\end{pmatrix}\tau :=(a\tau+b)\cdot(c\tau+d)^{-1}.
\]
 If $n=1$, $\H_n= \H_1$ specializes to the usual upper half plane $\H$. Moreover, $\H\times\H$ can be embedded into $\H_2$ by 
\[
(\tau_1,\tau_2)\mapsto \begin{pmatrix}\tau_1 & 0\\0&\tau_2\end{pmatrix}.
  \]
  Let $g', g=\kzxz{a}{b}{c}{d}\in \Sp(n,\R)$ and $\tau\in H_n$.
In order to define Siegel modular forms, we also need
an automorphy factor $j_n$:
We set 
\[
j_n(g,\tau):=\det(c\tau+d). 
\]
The automorphy factor satisfies the usual cocycle relation 
\[
j_n(g g',\tau) = j_n(g,g'\tau)j_n(g',\tau).
\]
According to \cite{Fr}, Satz 1.4, we have  the following identity
\[
\im(g\tau) = ^t(c\tau+d)^{-1}\im(\tau)\overline{c\tau+d}^{-1},
\]
which yields
\begin{equation}\label{eq:imag_auto}
\det(\im(g\tau)) = |j_n(g,\tau)|^{-2}\det(\im(\tau)).
\end{equation}

For later purposes we introduce  the function 
  \begin{equation}\label{def:holom_func_abbr}
  w\mapsto \varphi_{l,s}(w)=w^{-l}|w|^{-2s}
  \end{equation}
  and denote with $|A|$ the order of any finite set $A$.

\section{The  Weil representation}\label{sec:finite_weil_rep}
In this section we introduce a finite dimensional representation $\rho_{L,n}$  of the symplectic modular group $\Gamma_n$. It is isomorphic to a subrepresentation of the Weil representation (see e. g. \cite{St}, Chapter 3.3) as defined originally in \cite{Wei}, which explains the name ``Weil representation''. We then specialize to the cases of $n=2$ and $n=1$ and study some relations between $\rho_{L,2}$ and $\rho_{L,1}$, which will be crucial for the Garret-B\"ocherer decomposition of the vector valued Siegel Eisenstein series $E_{l,0}^2$.

The following notation will be used this way throughout the whole paper: Let $L$ be a lattice of rank $m$  equipped with a symmetric $\Z$-valued bilinear form $(\cdot,\cdot)$ such that the associated quadratic form
\[
q(x):=\frac{1}{2}(x,x),\quad x\in L,
\]
takes values in $\Z$. We assume that $m$ is even, $L$ is non-degenerate and denote its type by $(b^+,b^-)$ and its signature $b^+-b^-$ by $\sig(L)$. Note that $\sig(L)$ is also even. We stick with these assumptions on $L$ for the rest of this paper unless we state it otherwise. Further, let 
\[
L':=\{x\in V=L\otimes \Q\; :\; (x,y)\in\Z\quad \text{ for all } \; y\in L\}
\]
be the dual lattice of $L$. 
Since $L\subset L'$, the elementary divisor theorem implies that $L'/L$ is a finite group. The modulo 1 reduction of both, the bilinear form $(\cdot, \cdot)$ and the associated quadratic form, defines a $\Q/\Z$-valued bilinear form $(\cdot,\cdot)$ with corresponding $\Q/\Z$-valued quadratic form on $L'/L$. 
For any two elements $\mu,\nu\in (L'/L)^n$ we define
\begin{equation}\label{eq:moment_matrix}
  \begin{split}
  &  (\mu,\nu)=((\mu_i,\nu_j))_{i,j}\in \Sym_n(\Q),\\
  & Q[\mu] = \frac{1}{2}(\mu,\mu)\in \Sym_n(\Q).
    \end{split}
  \end{equation}
It can be easily verified that
\begin{equation}\label{eq:bilinear_form_n_dim}
  \tr(\mu,\nu)=\sum_{i=1}^n(\mu_i,\nu_i)
\end{equation}
defines a $\Q/\Z$-valued bilinear form on $(L'/L)^n$ with associated quadratic form
\begin{equation}\label{eq:quadratic_form_n_dim}
  \tr(Q[\mu]) = \frac{1}{2}\sum_{i=1}^nq(\mu_i). 
\end{equation}
We call $((L'/L)^n,\tr(\mu,\nu))$ a finite quadratic module or a discriminant form. 
We call the discriminant form $(L'/L, (\cdot,\cdot))$  anisotropic, if $q(\mu) = 0$ holds only for $\mu=0$.

The Weil representation $\rho_{L,n}$ is a representation on the group ring $\C[(L'/L)^n]$. We denote its standard basis  by $\{\frake_{\lambda}\}_{\lambda\in (L'/L)^n}$.
The standard scalar product on $\C[(L'/L)^n]$ is given by
  \begin{equation}\label{eq:scalar_product_group_ring}
    \left\langle \sum_{\lambda\in (L'/L)^n}a_\lambda\frake_\lambda,\sum_{\lambda\in (L'/L)^n}b_\lambda\frake_\lambda \right\rangle_n =\sum_{\lambda\in (L'/L)^n}a_\lambda \overline{b_\lambda}.
  \end{equation}
As $\Gamma_n$ is generated by the matrices
\begin{equation}\label{eq:symplectic_group_gen}
S_n=\zxz{0}{-1_n}{1_n}{0},\quad T_n(b)= \zxz{1_n}{b}{0}{1_n}, 
\end{equation}
where $b\in \Sym_n(\Z)$, it is sufficient to define $\rho_{L,n}$ by the action on these  generators. Note that we will use these symbols in the case $n=1$ without the subscript $n$. Clearly, $T(b)$ is equal to $T^b$ in this case. 

\begin{definition}\label{def:weil_finite_repr}
  The representation $\rho_{L,n}$ of $\Gamma_n$ on $\C[(L'/L)^n],$ defined by
\begin{equation}\label{eq:weil_finite_repr}
\begin{split}
&\rho_{L,n}(T_n(b))\frake_\lambda := e(\tr(bQ[\lambda]))\frake_\lambda,\\
& \rho_{L,n}(S_n))\frake_\lambda:=\frac{e(-\frac{n\sig(L)}{8})}{|L'/L|^{n/2}}\sum_{\mu\in (L'/L)^n}e(-\tr(\mu,\lambda))\frake_\mu,
\end{split}
\end{equation}
is called {\it Weil representation}.
\end{definition}

\begin{remark}\label{rem:weil_sign}
\begin{enumerate}
\item
  The action of $m_n(a) = \zxz{a}{0}{0}{(a^{-1})^t}\in \Gamma_n$ with  $a\in \GL_n(\Z)$ via the Weil representation will be needed later on. It is given by
  \begin{equation}\label{eq:weil_m}
    \rho_{L,n}(m_n(a))\frake_\lambda = \chi_V(\det(a))\frake_{\lambda a^{-1}},
  \end{equation}
  where $\chi_V(\det(a)) = \det(a)^{\sig(L)/2}$,  see \cite{Zh}, Def. 2.2. For further details, e. g. regarding the connection of $\rho_{L,n}$ to the Weil representation in \cite{Wei}, see \cite{Zh} or \cite{St}, p. 10.
\item[ii)]
We denote by $N$ the level of the lattice $L$. It is the smallest positive integer such that $Nq(\lambda)\in \Z$ for all $\lambda\in L'$.  One can prove that the Weil representation $\rho_{L,n}$ is trivial on $\Gamma_n(N)$, the principal congruence subgroup of level $N$, cf. \cite{R}, Theorem 2.4. Therefore, $\rho_{L,n}$ factors over the finite group
  \[
  \Gamma_n/\Gamma_n(N)\cong \Sp(n,\Z/N\Z).
  \]
\item[iii)]
  We denote by  $\rho^*_{L,n}$ the dual representation of $\rho_{L,n}$. Since the Weil representation is unitary, we obtain $\rho^*_{L,n}$ from $\rho_{L,n}$ just by replacing the quadratic  module $((L'/L)^n,\tr(\cdot,\cdot))$ with $((L'/L)^2,-\tr(\cdot,\cdot))$. Therefore, any result involving $\rho_{L,n}$ carries over to $\rho^*_{L,n}$.
  Since $\rho_{L,n}$ is unitary with respect to $\langle \cdot, \cdot\rangle_n$, its dual representation  $\rho^*_{L,n}$ is equal to   the complex conjugate of $\rho_{L,n}$ (interpreted as matrices), that is, 
  \[
  \rho^*_{L,n}(g) = \rho^{-1}_{L,n}(g) = \overline{\rho_{L,n}(g)}
  \]
  for all $g\in \Gamma_n$. 
  In particular,
  \begin{align*}
    \rho^*_{L,n}(g)\frake_\lambda &= \sum_{\mu\in (L'/L)^n}\langle\frake_\lambda,\rho^*_{L,n}(g)\frake_\mu\rangle_{n}\frake_\mu = \sum_{\mu\in (L'/L)^n}\overline{\langle\frake_\lambda,\rho_{L,n}(g)\frake_\mu\rangle}_{n}\frake_\mu. 
  \end{align*}
\item[iv)]
  There is an isomorphism on $\Gamma_n$:
  \begin{equation}\label{eq:conjuation}
\gamma\mapsto    \widetilde{\gamma} = \kzxz{-1_n}{0}{0}{1_n}\gamma\kzxz{-1_n}{0}{0}{1_n}.
   \end{equation}
  For the generators of $\Gamma_n$ we have
  \begin{align*}
  \widetilde{S_n} = S_n^{-1} \text{ and } \widetilde{T_n(b)} = T_n(b)^{-1}
  \end{align*}
Since the map \eqref{eq:conjuation} is an isomorphism,  it  follows
 immediately
\begin{equation}\label{eq:weil_conjugate}
 \rho_{L,n}(\widetilde{\gamma})\frake_\lambda = \rho_{L,n}^{-1}(\gamma)= \rho^*_{L,n}(\gamma) 
\end{equation}
for any $\gamma\in \Gamma_n$, where the last equation is due to part iii) of this remark. 
\end{enumerate}
\end{remark}

We now concentrate on  the special cases $n=1$ and $n=2$. It is well known that $\rho_{L,2}$ and $\rho_{L,1}$ are closely related. To describe this relation more explicitly for some elements of $\Gamma_n$ we first recall some facts see from \cite{BY}, p. 647 and \cite{SV}, Chapter 5,  which relate the group rings $\C[(L'/L)^2]$ and $\C[L'/L]$ and as a consequence $\rho_{L,2}$ and $\rho_{L,1}$: The map
\begin{equation}\label{eq:isomorphism_group_rings}
\C[(L'/L)^2]\rightarrow \C[L'/L]\otimes \C[L'/L],\quad \frake_{(\mu,\nu)}\mapsto \frake_\mu\otimes \frake_\nu
\end{equation}
defines an isomorphism, which induces an isomorphism of the representations $\rho_{L,2}$ and $\rho_{L,1}\otimes \rho_{L,1}$.
It is also a known fact that we can equip $\C[L'/L]\otimes \C[L'/L]$ with an inner product by
\begin{equation}\label{eq:scalar_product_rel}
    \langle \frake_{\lambda_1}\otimes \frake_{\mu_1},\frake_{\lambda_2}\otimes\frake_{\mu_2}\rangle_2 = \langle\frake_{\lambda_1},\frake_{\lambda_2}\rangle_{1} \cdot \langle\frake_{\mu_1},\frake_{\mu_2}\rangle_{1}.
\end{equation}
It is then easily checked that the scalar product \eqref{eq:scalar_product_rel} coincides with \eqref{eq:scalar_product_group_ring} and the map \eqref{eq:isomorphism_group_rings} becomes an isometry.

Finally, note that $\C[L'/L]$ can be embedded into $\C[L'/L]\otimes\C[L'/L]$ and therefore into $\C[(L'/L)^2]$ via the map
\begin{equation}\label{eq:embedding_group_ring}
  \frake_\lambda\mapsto \frake_\lambda\otimes \frake_\lambda.
\end{equation}
The image of $\C[L'/L]$ with respect to this map is isomorphic to $\C[L'/L]$. 

Using the  formulas  in \eqref{eq:weil_finite_repr} and the isomorphism \eqref{eq:isomorphism_group_rings}, we can express $\rho_{L,2}$ on the generators of $\Gamma_2$ explicitly in terms of $\rho_{L,1}$.
\begin{lemma}\label{lem:rel_generators}
  We have the following relation between $\rho_{L,2}$ and $\rho_{L,1}$ on the generators of $\Gamma_2$ and $\Gamma_1$, respectively:
  \begin{equation}\label{eq:rel_generators_rho_L}
  \begin{split}
& \rho_{L,2}(S_2)\frake_{(\lambda_1,\lambda_2)} = \rho_{L,1}(S)\frake_{\lambda_1}\otimes \rho_{L,1}(S)\frake_{\lambda_2}\\
&\rho_{L,2}(T_2\kzxz{a_1}{0}{0}{a_2})\frake_{(\lambda_1,\lambda_2)} = \rho_L(T^{a_1})\frake_{\lambda_1}\otimes \rho_L(T^{a_2})\frake_{\lambda_2},
  \end{split}
  \end{equation}
where  $\kzxz{a_1}{0}{0}{a_2}\in M_2(\Z)$. Clearly, an analogous formula holds for $S_2^{-1}$. 
\end{lemma}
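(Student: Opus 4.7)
The proof is a direct verification using Definition \ref{def:weil_finite_repr} and the isomorphism \eqref{eq:isomorphism_group_rings}. The plan is to compute each side on the standard basis vector $\frake_{(\lambda_1,\lambda_2)}\in \C[(L'/L)^2]$, use \eqref{eq:isomorphism_group_rings} to identify it with $\frake_{\lambda_1}\otimes\frake_{\lambda_2}\in \C[L'/L]\otimes\C[L'/L]$, and match the resulting expressions term by term.

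For the translation generator, I would evaluate $\rho_{L,2}(T_2(b))\frake_{(\lambda_1,\lambda_2)}=e(\tr(bQ[(\lambda_1,\lambda_2)]))\frake_{(\lambda_1,\lambda_2)}$ with $b=\kzxz{a_1}{0}{0}{a_2}$. Since the $2\times 2$ matrix $Q[(\lambda_1,\lambda_2)]$ has diagonal entries $q(\lambda_1)$ and $q(\lambda_2)$, the diagonality of $b$ forces the off-diagonal contributions to drop, yielding $\tr(bQ[(\lambda_1,\lambda_2)])=a_1q(\lambda_1)+a_2q(\lambda_2)$. The exponential factorises, and the result equals $e(a_1 q(\lambda_1))\frake_{\lambda_1}\otimes e(a_2q(\lambda_2))\frake_{\lambda_2}=\rho_{L,1}(T^{a_1})\frake_{\lambda_1}\otimes \rho_{L,1}(T^{a_2})\frake_{\lambda_2}$, using the one-dimensional case of \eqref{eq:weil_finite_repr}.

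For $S_2$ I would apply the second line of \eqref{eq:weil_finite_repr} with $n=2$, giving
\[
\rho_{L,2}(S_2)\frake_{(\lambda_1,\lambda_2)}=\frac{e(-\sig(L)/4)}{|L'/L|}\sum_{(\mu_1,\mu_2)\in (L'/L)^2}e(-\tr((\mu_1,\mu_2),(\lambda_1,\lambda_2)))\frake_{(\mu_1,\mu_2)}.
\]
The key observation is that by \eqref{eq:bilinear_form_n_dim} the trace splits as $\tr((\mu,\nu))=(\mu_1,\lambda_1)+(\mu_2,\lambda_2)$, so the exponential factorises, the double sum decouples, and the prefactor splits as $\tfrac{e(-\sig(L)/4)}{|L'/L|}=\bigl(\tfrac{e(-\sig(L)/8)}{|L'/L|^{1/2}}\bigr)^{2}$. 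Under \eqref{eq:isomorphism_group_rings} this is exactly $\rho_{L,1}(S)\frake_{\lambda_1}\otimes\rho_{L,1}(S)\frake_{\lambda_2}$.

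There is no real obstacle here; the only point requiring a little care is the bookkeeping of the signature factor and the normalising constant $|L'/L|^{-n/2}$, which must split as a product across the two tensor slots. The claim for $S_2^{-1}$ then follows either by the same argument (replacing the sign in the exponent) or by inverting the displayed formula using that $\rho_{L,2}$ is unitary.
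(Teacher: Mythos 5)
Your proof is correct and follows exactly the route the paper takes: the paper's own proof simply states that the identities "can be checked by a straightforward computation involving the formulas \eqref{eq:weil_finite_repr} for $n=2$ and $n=1$ and the isomorphism \eqref{eq:isomorphism_group_rings}", which is precisely the computation you carry out. The only blemish is a typo where you write $\tr((\mu,\nu))$ for what should be $\tr((\mu,\lambda))$; the substance is fine.
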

\begin{proof}
These identities can be checked by a straightforward computation involving the formulas \eqref{eq:weil_finite_repr} for $n=2$ and $n=1$ and the isomorphism \eqref{eq:isomorphism_group_rings}.
\end{proof}

The results of the next lemma will be crucial in the proof of a Garret-B\"ocherer decomposition of the Siegel Eisenstein series $E_{l,0}^2$. By identifying $\C[(L'/L)^2]$ with $\C[L'/L]\otimes\C[L'/L]$ they provide formulas that link  $\rho_{L,2}(\gamma^{\uparrow})$ and $\rho_{L,2}(\gamma^{\downarrow})$ with $\rho_{L,1}(\gamma)$. 

\begin{lemma}\label{lem:embedding_weil}
The following formulas for the Weil representation $\rho_{L,2}$ hold:
  \begin{enumerate}
    \item[i)]
\begin{equation}
\rho_{L,2}(S^{\uparrow})\frake_{(\lambda_1,\lambda_2)}=\rho_{L,1}(S)\frake_{\lambda_1}\otimes \frake_{\lambda_2},
\end{equation}
\begin{equation}
\rho_{L,2}(S^{\downarrow})\frake_{(\lambda_1,\lambda_2)} = \frake_{\lambda_1}\otimes \rho_{L,1}(S)\frake_{\lambda_2},
\end{equation}
\begin{equation}\label{eq:embedd_T_1}
\rho_{L,2}(T^{\uparrow})\frake_{(\lambda_1,\lambda_2)} = \rho_{L,1}(T)\frake_{\lambda_1}\otimes \frake_{\lambda_2},
\end{equation}
and
\begin{equation}\label{eq:embedd_T_2}
\rho_{L,2}(T^{\downarrow})\frake_{(\lambda_1,\lambda_2)} = \frake_{\lambda_1}\otimes \rho_{L,1}(T)\frake_{\lambda_2}.
\end{equation}
\item[ii)]
For any $\gamma \in \SL_2(\Z)$ we have
\begin{equation}\label{eq:weil_embedd_1}
  \rho_{L,2}(\gamma^{\uparrow})\frake_{(\lambda_1,\lambda_2)}= \rho_L(\gamma)\frake_{\lambda_1}\otimes \frake_{\lambda_2}
\end{equation}
and
\begin{equation}\label{eq:weil_embedd_2}
  \rho_{L,2}(\gamma^{\downarrow})\frake_{(\lambda_1,\lambda_2)} = \frake_{\lambda_1}\otimes\rho_L(\gamma)\frake_{\lambda_2}.
\end{equation}
  \end{enumerate}
\end{lemma}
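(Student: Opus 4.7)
The plan is to reduce both parts to direct verifications on a small set of generators. For part (i), I identify the embedded matrices explicitly and apply the defining formula \eqref{eq:weil_finite_repr}; for part (ii), I use the homomorphism property of $\uparrow$ and $\downarrow$ together with the fact that $S, T$ generate $\Gamma_1$.

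For the $T$-identities \eqref{eq:embedd_T_1} and \eqref{eq:embedd_T_2}, a glance at \eqref{eq:embedding_1}--\eqref{eq:embedding_2} reveals $T^{\uparrow} = T_2\!\kzxz{1}{0}{0}{0}$ and $T^{\downarrow} = T_2\!\kzxz{0}{0}{0}{1}$. Plugging these into the first line of \eqref{eq:weil_finite_repr} and evaluating $\tr(bQ[\lambda])$ via \eqref{eq:moment_matrix} gives $q(\lambda_1)$ or $q(\lambda_2)$ respectively, which matches $\rho_{L,1}(T)$ acting on the appropriate tensor factor under the isomorphism \eqref{eq:isomorphism_group_rings}.

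For $S^{\uparrow}$ and $S^{\downarrow}$, which are not among the standard Siegel generators $\{S_2, T_2(b)\}$, I would first check by a direct $4\times 4$ matrix computation the factorisation $S_2 = S^{\uparrow}\cdot S^{\downarrow}$ (the two commute in $\Gamma_2$). By Lemma \ref{lem:rel_generators} this already yields
\[
\rho_{L,2}(S^{\uparrow})\,\rho_{L,2}(S^{\downarrow}) = \rho_{L,1}(S)\otimes\rho_{L,1}(S).
\]
To disentangle the two factors I would exploit the block-swap $\tau = m_2\!\kzxz{0}{1}{1}{0}\in \Gamma_2$, which satisfies $\tau S^{\uparrow}\tau^{-1} = S^{\downarrow}$ and whose Weil action, coming from \eqref{eq:weil_m}, is $\rho_{L,2}(\tau)\frake_{(\lambda_1,\lambda_2)} = (-1)^{\sig(L)/2}\frake_{(\lambda_2,\lambda_1)}$. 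Together with the commutation relations $[S^{\uparrow}, T^{\downarrow}] = [S^{\downarrow}, T^{\uparrow}] = 1$ (whose Weil-side consequences are already evaluated via \eqref{eq:embedd_T_1}--\eqref{eq:embedd_T_2}), these constraints pin down the pair of operators to the asserted tensor factorisations $\rho_{L,1}(S)\otimes \id$ and $\id\otimes \rho_{L,1}(S)$. A more mechanical alternative is to write $S^{\uparrow}$ explicitly as a word in $S_2$, $T_2(b)$, and $m_2(a)$ (exploiting the structure of the parabolic $\Gamma_{2,1}$, in which $S^{\uparrow}$ lies) and evaluate term by term.

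Part (ii) then follows formally. Since $\uparrow: \Gamma_1 \to \Gamma_2$ is a group homomorphism and $\rho_{L,2}$ is a representation, the composition $\gamma \mapsto \rho_{L,2}(\gamma^{\uparrow})$ is a representation of $\Gamma_1$ on $\C[(L'/L)^2]$. Under the identification \eqref{eq:isomorphism_group_rings}, the candidate $\gamma \mapsto \rho_{L,1}(\gamma)\otimes \id$ is likewise a representation of $\Gamma_1$, and by part (i) the two agree on the generators $S$ and $T$. Hence they agree on all of $\Gamma_1$, proving \eqref{eq:weil_embedd_1}; the argument for \eqref{eq:weil_embedd_2} is identical with $\uparrow$ and $\downarrow$ interchanged. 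The main obstacle will be the second step: the single factorisation $S_2 = S^{\uparrow}S^{\downarrow}$ underdetermines the individual factors, and it is the simultaneous use of swap-conjugation by $\tau$, the commutation with the $T$-embeddings, and careful bookkeeping of the sign $(-1)^{\sig(L)/2}$ from \eqref{eq:weil_m} that is needed to isolate each factor. If that uniqueness argument proves unwieldy, the fallback is the explicit word-decomposition of $S^{\uparrow}$ in the Siegel generators — mechanical but unambiguous.
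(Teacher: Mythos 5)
Your treatment of the $T$-identities and of part (ii) matches the paper and is fine. The gap is in your primary argument for $S^{\uparrow}$ and $S^{\downarrow}$: the constraints you list do \emph{not} pin down the individual factors. Write $A=\rho_{L,2}(S^{\uparrow})$, $B=\rho_{L,2}(S^{\downarrow})$ and let $\Sigma$ denote the tensor swap. Your constraints are $AB=\rho_{L,1}(S)\otimes\rho_{L,1}(S)$, $B=\Sigma A\Sigma$, $[A,\id\otimes\rho_{L,1}(T)]=0$, $[A,B]=0$ (and, if you add it, $A^2=\rho_{L,1}(S)^2\otimes\id$). Now let $W$ be the unitary involution $\frake_\lambda\mapsto\frake_{-\lambda}$ of $\C[L'/L]$; it lies in the commutant of $\rho_{L,1}$ (it commutes with $\rho_{L,1}(T)$ because $q(-\lambda)=q(\lambda)$ and with $\rho_{L,1}(S)$ by a one-line check), and it is not a scalar as soon as some $\lambda\neq-\lambda$ in $L'/L$. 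Then $A'=(\rho_{L,1}(S)W)\otimes W$ together with $B'=\Sigma A'\Sigma$ satisfies every one of the constraints above, yet $A'\neq\rho_{L,1}(S)\otimes\id$. So "these constraints pin down the pair of operators" is false in general, and the disentangling step cannot be completed as described; the issue is exactly that the commutant of $\rho_{L,1}$ is nontrivial whenever the representation is reducible, which is the generic case.

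What actually works is the route you relegate to a fallback, and it is the paper's proof: one has the explicit factorisation $S^{\uparrow}=U_2(b)T_2(-b)U_2(b)$ with $b=\kzxz{1}{0}{0}{0}$ and $U_2(b)=S_2T_2(-b)S_2^{-1}$, so that $S^{\uparrow}$ is a word in the Siegel generators $S_2$, $T_2(\cdot)$ with \emph{diagonal} symmetric matrices. Lemma \ref{lem:rel_generators} then pushes the whole word down to the first tensor factor, giving $\rho_{L,2}(S^{\uparrow})\frake_{(\lambda_1,\lambda_2)}=\rho_{L,1}(ST^{-1}S^{-1}T^{-1}ST^{-1}S^{-1})\frake_{\lambda_1}\otimes\frake_{\lambda_2}$, and it remains to identify $\rho_{L,1}(ST^{-1}S^{-1}T^{-1}ST^{-1}S^{-1})$ with $\rho_{L,1}(S)$ (the paper does this via McGraw's lemma and Milgram's formula; since the word equals $S$ as a matrix in $\SL_2(\Z)$ and $\rho_{L,1}$ is a genuine representation, this is also immediate). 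This computation is short and unambiguous, so I would drop the uniqueness argument entirely and carry out the word evaluation; as written, your proposal leaves the decisive step unproved.
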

\begin{proof}
  \begin{enumerate}
    \item[$i)$]
      For arbitrary $a\in\Sym_2(\Z)$ we use the notation
\begin{equation}\label{def:U_2}
  U_2(a) = \zxz{1_2}{0}{a}{1_2}\in \Gamma_2.
\end{equation}
  One can easily verify the decomposition
\begin{align*}
  &S^{\uparrow}=  U_2(b)T_2(-b)U_2(b) \text{ and }\\
  &S^{\downarrow}=  U_2(c)T_2(-c)U_2(c),
\end{align*}
where $b=\kzxz{1}{0}{0}{0}$ and $c=\kzxz{0}{0}{0}{1}$.  The matrix $U_2(a)$ can be written as product of the generators  \eqref{eq:symplectic_group_gen} 
\begin{align*}
U_2(a) = S_2T_2(-a)S_2^{-1} 
\end{align*}
and therefore
\begin{align*}
  S^{\uparrow} = S_2T_2(-b)S_2^{-1}T_2(-b)S_2T_2(-b)S_2^{-1},\quad S^{\downarrow} = S_2T_2(-c)S_2^{-1}T_2(-c)S_2T_2(-c)S_2^{-1}.
\end{align*}
If we employ the identities \eqref{eq:rel_generators_rho_L} successively for all constituents of the decomposition above, \eqref{eq:weil_finite_repr} applied to $T_2(-b)$ and $T_2(-c)$,  and the bilinearity of the tensor product, we may transfer the evaluation of $\rho_{L,2}(S^{\uparrow})$ to the evaluation of $\rho_{L,1}$ on the corresponding element in $\Gamma_1$:
\begin{equation}
  \begin{split}
    \rho_{L,2}(S^{\uparrow})\frake_{(\lambda_1,\lambda_2)} &= \rho_{L,1}(ST^{-1}S^{-1}T^{-1}ST^{-1}S^{-1})\frake_{\lambda_1}\otimes \rho_{L,1}(SS^{-1}SS^{-1})\frake_{\lambda_2}\\
 & =\rho_{L,1}(ST^{-1}S^{-1}T^{-1}ST^{-1}S^{-1})\frake_{\lambda_1}\otimes\frake_{\lambda_2}  
  \end{split}
\end{equation}
and accordingly
\begin{equation}
  \begin{split}
    \rho_{L,2}(S^{\downarrow})\frake_{(\lambda_1,\lambda_2)} &=\rho_{L,1}(SS^{-1}SS^{-1})\frake_{\lambda_1} \otimes\rho_{L,1}(ST^{-1}S^{-1}T^{-1}ST^{-1}S^{-1})\frake_{\lambda_2} \\
& =\frake_{\lambda_1}\otimes \rho_{L,1}(ST^{-1}S^{-1}T^{-1}ST^{-1}S^{-1})\frake_{\lambda_2}.  
  \end{split}
\end{equation}
We can further simplify the expressions above by means of \cite{McG}, Lemma 4.6. It states for integers $a,d$ satisfying $ad\equiv 1\bmod{N}$ that
\[
\rho_{L,1}(ST^dS^{-1}T^aST^d)\frake_\lambda = \frac{g_{d}(L)}{g(L)}\frake_{d\lambda}, 
  \]
  where $g_d(L) = \sum_{\lambda\in L'/L}e(dq(\lambda))$ and $g(L) = g_1(L)$.
  If we apply this result for $a=d=-1$, we obtain
  \begin{align*}
    \rho_{L,1}(ST^{-1}S^{-1}T^{-1}ST^{-1}S^{-1})\frake_{\lambda} = \frac{e\left(\frac{\sig(L)}{8}\right)}{\sqrt{|L'/L|}}\sum_{\mu\in L'/L}e((\mu,\lambda))\frac{g_{-1}(L)}{g(L)}\frake_{-\mu}.
  \end{align*}
  By Milgram's formula,  
  \[
  g(L) = \sqrt{|L'/L|}e(\sig(L)/8),
  \]
  we have
  \begin{align*}
    e(\sig(L)/8)\frac{g_{-1}(L)}{g(L)} &= e(-\sig(L)/8),
  \end{align*}
  which implies
  \[
  \rho_{L,1}(ST^{-1}S^{-1}T^{-1}ST^{-1}S^{-1})\frake_{\lambda} = \rho_{L,1}(S)\frake_\lambda. 
  \]
  The identities \eqref{eq:embedd_T_1} and \eqref{eq:embedd_T_2} are due to Lemma \ref{lem:rel_generators} and the fact that
  \[
  T^{\uparrow} = T_2(b) \text{ and } T^{\downarrow} = T_2(c),
  \]
  where $b, c$ have same meaning as before in the proof. 
\item[$ii)$]
  This follows easily from the fact that the embeddings \eqref{eq:embedding_1} and \eqref{eq:embedding_2} are group homomorphisms combined with the corresponding formulas for the generators $S$ and $T$ of $\Gamma_1$. 
  \end{enumerate}
\end{proof}

We end this chapter with the evaluation of  the Weil representation $\rho_{L,2}$ on the element $U_2(D)\in \Gamma_2$, where $D$ is equal to $\kzxz{0}{d}{d}{0}$ and $d$ is a positive integer.  This evaluation will also contribute to the proof of a Garret-B\"ocherer decomposition of the Siegel Eisenstein series $E_{l,0}^2$. 
\begin{lemma}\label{lem:weil_gm}
  Let $d\in \N$, $D=\kzxz{0}{d}{d}{0}\in \Sym_2(\Z)$ and  $U_2(D)\in \Gamma_2$ defined as in  \eqref{def:U_2}.

   Then
   \begin{equation}\label{eq:weil_gm}
     \begin{split}
       \rho_{L,2}^{-1}(U_2(D))\frake_{\lambda_1}\otimes\frake_{\lambda_2} &= \frac{1}{|L'/L|}\sum_{\mu,\nu\in L'/L}e((\mu,\lambda_2-\nu))\frake_{d\mu+\lambda_1}\otimes \frake_\nu\\
    \end{split}
     \end{equation}
\end{lemma}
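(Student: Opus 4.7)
The plan is to reduce the evaluation to a product of actions of the generators $S_2$ and $T_2(\cdot)$. From the decomposition $U_2(a)=S_2 T_2(-a) S_2^{-1}$ (used in the proof of Lemma \ref{lem:embedding_weil}) one obtains $U_2(D)^{-1}=U_2(-D)=S_2 T_2(D) S_2^{-1}$, hence
\[
\rho_{L,2}^{-1}(U_2(D))=\rho_{L,2}(S_2)\,\rho_{L,2}(T_2(D))\,\rho_{L,2}(S_2^{-1}).
\]
One now applies these three operators to $\frake_{\lambda_1}\otimes\frake_{\lambda_2}=\frake_{(\lambda_1,\lambda_2)}$ in order, using the explicit formulas \eqref{eq:weil_finite_repr}; by unitarity of $\rho_{L,2}$ (equivalently, by Remark \ref{rem:weil_sign}(iii)), $\rho_{L,2}(S_2^{-1})$ is obtained from $\rho_{L,2}(S_2)$ by complex-conjugating the matrix coefficients.

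The action of $T_2(D)$ is diagonal, and the key arithmetic input is
\[
\tr\bigl(D\,Q[(\mu_1,\mu_2)]\bigr)=d(\mu_1,\mu_2),
\]
which is immediate from $D=\kzxz{0}{d}{d}{0}$ together with \eqref{eq:moment_matrix}. Sandwiching $\rho_{L,2}(T_2(D))$ between the two $S_2^{\pm 1}$ factors then produces the quadruple character sum
\begin{align*}
& \rho_{L,2}^{-1}(U_2(D))\frake_{(\lambda_1,\lambda_2)} \\
&\quad = \frac{1}{|L'/L|^{2}}\sum_{\mu_1,\mu_2,\nu_1,\nu_2}e\bigl((\lambda_1-\nu_1,\mu_1)+(\lambda_2-\nu_2,\mu_2)+d(\mu_1,\mu_2)\bigr)\frake_{(\nu_1,\nu_2)},
\end{align*}
with the two prefactors $e(\pm\sig(L)/4)$ produced by the two $S_2^{\pm 1}$ factors cancelling. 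Summing first over $\mu_1$, the exponent regroups as $(\lambda_1-\nu_1+d\mu_2,\mu_1)$, and Pontryagin orthogonality on the finite abelian group $L'/L$ collapses that sum to $|L'/L|\cdot\mathbf{1}[\nu_1=\lambda_1+d\mu_2]$. Eliminating $\nu_1$, renaming $\mu_2\to\mu$ and $\nu_2\to\nu$, using the symmetry $(\lambda_2-\nu,\mu)=(\mu,\lambda_2-\nu)$, and invoking the identification $\frake_{(\alpha,\beta)}=\frake_\alpha\otimes\frake_\beta$ from \eqref{eq:isomorphism_group_rings} yields exactly the claimed formula.

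The main obstacle here is really only bookkeeping: one must sum out $\mu_1$ (rather than $\mu_2$) so that the surviving variable ends up in the first tensor slot $\frake_{d\mu+\lambda_1}$, and one must verify that the two prefactors produced by $\rho_{L,2}(S_2^{\pm 1})$ cancel exactly. No deeper input (such as Milgram's formula, used in the proof of Lemma \ref{lem:embedding_weil}) is required.
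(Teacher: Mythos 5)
Your proposal is correct and follows essentially the same route as the paper: both use the decomposition $U_2(D)^{-1}=U_2(-D)=S_2T_2(D)S_2^{-1}$, the identity $\tr(DQ[(\mu_1,\mu_2)])=d(\mu_1,\mu_2)$, and orthogonality of characters on $L'/L$ to collapse the sum over $\mu_1$ to the constraint $\nu_1=d\mu_2+\lambda_1$. The only cosmetic difference is that the paper first splits the action into tensor factors via Lemma \ref{lem:rel_generators} before summing, while you work directly with the genus-$2$ formulas \eqref{eq:weil_finite_repr}; the resulting computation is identical.
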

\begin{proof}

Clearly, $U_2(D)^{-1} = U_2(-D)=S_2T_2(D)S_2^{-1}$. 
    A straightforward calculation  using Lemma \ref{lem:rel_generators} and \eqref{eq:weil_finite_repr} yields
    \begin{align*}
      & \rho_{L,2}(U_2(-D))\frake_{\lambda_1}\otimes\frake_{\lambda_2} \\
      &=     \frac{1}{|L'/L|^2}\sum_{\nu_1,\nu_2\in L'/L}\sum_{\mu_2\in L'/L}e((\mu_2,\lambda_2-\nu_2))\sum_{\mu_1\in L'/L}e((\mu_1,d\mu_2+\lambda_1-\nu_1))\frake_{\nu_1}\otimes\frake_{\nu_2}.\\
    \end{align*}
    The sum over $\mu_1$ is equal to $|L'/L|$ if $\nu_1=d\mu_2+\lambda_1$ and 0 otherwise. Hence,
    \begin{align*}
      \rho_{L,2}(U_2(D)^{-1})\frake_{\lambda_1}\otimes\frake_{\lambda_2} = \frac{1}{|L'/L|}\sum_{\nu_2\in L'/L}\sum_{\mu_2\in L'/L}e((\mu_2,\lambda_2-\nu_2))\frake_{d\mu_2+\lambda_1}\otimes\frake_{\nu_2}. 
    \end{align*}
    as claimed.

\end{proof}

\section{Vector valued Siegel modular forms and Eisenstein series}\label{sec:vec_val_mfs_eisenstein_series}
One major  goal of the present paper is to study the analytic properties of the standard zeta function of a certain vector valued modular form of type $\rho_{L,1}$ (a common eigenform of all Hecke operators $T(m^2)$). The main tool in this regard is the Garret-B\"ocherer decomposition of a non-holomorphic vector valued Siegel Eisenstein series $E_{l,0}^2$. 
In this section we provide the necessary background to prove  the Garret-B\"ocherer decomposition and to define and study the before mentioned standard zeta function. 
To this end, we collect some facts about vector valued Siegel modular forms of type $\rho_{L,n}$ and  introduce the non-holomorphic vector valued Siegel Eisenstein series $E_{l,0}^2$ transforming with $\rho_{L,n}$, which plays a key role in determining the analytic properties of the standard zeta function. As a matter of fact, the zeta function inherits these properties from  $E_{l,0}^2$, which is the reason we state and discuss them in some detail in this section.  
We follow \cite{Zh} and \cite{St}.

\subsection{Vector valued Siegel modular forms of type $\rho_{L,n}$}
Let $l\in \Z$. For $f:\H_n\longrightarrow \C[(L'/L)^n]$ and $g\in \Gamma_n$  we define the usual {\it Petersson slash operator} by
\[
(f\mid^{n}_{l,L} g)(\tau):= j_n(g,\tau)^{-l}\rho_{L,n}(g)^{-1}f(g\tau). 
\]
The Petesson slash operator applied to the dual Weil representation $\rho^*_{L,n}$ will be denoted with $\mid^{n*}_{l,L}$. For $n=1$ we drop the superscript and subscript $n$  and simply write $j(g,\tau)$, $\mid_{l,L}$ or $\mid_{l,L}^*$. 

\begin{definition}\label{def:vec_val_siegel_mod_form}
Let $l\in \Z$. A function $f:\H_n\longrightarrow \C[(L'/L)^n]$ is called a Siegel modular form of weight $l$ with respect to $\Gamma_n$ and $\rho_{L,n}$ if 
\begin{enumerate}
\item
$f$ is holomorphic on $\H_n$,
\item
  $f\mid^{n}_{l,L}g = f\;$ for all $g \in \Gamma_n$,
\item
  $f$ has a Fourier expansion of the form
  \begin{equation}\label{eq:fourier_exp_mod_form}
    f(\tau) = \sum_{\lambda\in (L'/L)^n}\sum_{\substack{t\in\Lambda_n\\ t+Q[\lambda]\ge 0}}a(\lambda, t+ Q[\lambda])e(\tr((t+Q[\lambda])\tau))\frake_\lambda,
  \end{equation}
  where $\Lambda_n$ is the set of half-integral $(n\times n)$-matrices. 
\end{enumerate}
We denote the space of these modular forms with $M_{l}(\rho_{L,n})$. Note that the vanishing condition on the Fourier coefficients is automatically fulfilled for $n\ge 2$ by the Koecher principle.
Moreover, if $a(\lambda, t+Q[\lambda]) = 0$ for all $t\in \Lambda_n$ with $t+Q[\lambda] = 0$, then $f$ is called a cusp form. We use the notation $S_l(\rho_{L,n})$ for the subspace of all cusp forms.
\end{definition}
\begin{remark}
  \begin{enumerate}
\item[i)]
  We could have defined the modular forms in Definition \ref{def:vec_val_siegel_mod_form} also for half integral weight $l$. The following argument shows that $M_{l}(\rho_{L,n})=\{0\}$, if the condition
\begin{equation}\label{eq:even_odd}
  2l\equiv \sig(L)\bmod{2}
\end{equation}
is not satisfied.

For $e_n = \diag(-1,1,\dots,1)\in \GL_n(\Z)$  and $f\in M_l(\rho_{L,n})$ by \eqref{eq:weil_m} we have   
\begin{align*}
 f(\tau) = (f\mid^{n}_{l,L}m_n(e_n))(\tau) = (-1)^{-l}(-1)^{\sig(L)/2}f(\tau)
\end{align*}
for all $\tau\in \H_n$. Thus $2l$ and $\sig(L)$ must either be both even or odd if $f$ is a non-trivial form in $M_l(\rho_{L,n})$. Since we have assumed that the rank of $L$ is even, so is the signature $\sig(L)$ and therefore there are only non-trivial integral weight modular forms.
\item[ii)]
  Since $\rho_{L,n}$ is trivial on $\Gamma_n(N)$, it follows directly from Definition \ref{def:vec_val_siegel_mod_form} that the component functions $f_\lambda$ of $f = \sum_{\lambda\in (L'/L)^n}f_\lambda\frake_\lambda$ are Siegel modular forms of weight $l$ for $\Gamma_n(N)$. 
  \end{enumerate}
\end{remark}

\subsubsection{Petersson scalar product and Hecke operators}\label{subsec:petersson_prod_hecke_op}
For $f, g\in M_l(\rho_{L,n})$, one of which is a cusp form, we define the Petersson scalar product of $f$ and $g$ analogously to \cite{Br1}, Section 1.2.2., by
\begin{equation}\label{def:petersson_scalar_product}
(f,g)_n=  \int_{\Gamma_n\bs \H_n}\langle f(\tau),g(\tau)\rangle_n\det(\im(\tau))^ld\mu_n(\tau).
  \end{equation}
Here $d\mu_n(\tau) = \det(y)^{-n-1} dx dy$ with $\tau = x+iy$ is the usual symplectic volume element. If $n=1$, we omit the subscript $n$. The same arguments as in \cite{Br1} show that the integral \eqref{def:petersson_scalar_product} is well defined.

Let $D= \kzxz{d}{0}{0}{d^{-1}}$, $D'= \kzxz{d^2}{0}{0}{1}$ and $R_d=\kzxz{d}{0}{0}{d}$. We now briefly describe the definition of Hecke operators $T(D')$ and $T(D)$ on $M_l(\rho_{L,1})$ (to the best of my knowledge there is no theory of Hecke operators for $M_l(\rho_{L,n})$, although it might be developed in a similar manner). 
We follow \cite{BS}. All details can be found therein.
According to our assumptions on $L$, we only need to  consider the case of even signature.
Recall that $N$ is the level of $L$. We have to distinguish between the cases $(d,N)=1$ and $(d,N)>1$.

Let
\begin{equation}\label{eq:G_N}
\begin{split}
  \calG(N) =& \left\{M\in \GL_2^+(\Q);\quad \exists n\in \Z\text{ with } 
(n,N)=1 \text{ such that } nM\in M_2(\Z)\right.\\ &\left. \text{ and } 
(\det(nM),N)=1\right\}.
\end{split}
\end{equation}  
If $d$ is coprime to $N$, $(D,1)$ and  $(D',d)$ belong to the group
\begin{equation}\label{eq:Q_N}
  \calQ(N) = 
\left\{(M,r)\in \calG(N)\times (\Z/N\Z)^*;\quad \det(M)\equiv r^2\pmod{N}\right\}.
\end{equation}
We identify $\Gamma_1$ with a subgroup of $\calQ(N)$ via the map $\gamma \mapsto (\gamma,1)$. In \cite{BS} an extension of $\rho_{L,1}$ to the group $\calQ(N)$ is introduced. The condition that the determinant of any element of this group is coprime to the level is crucial. 
Based on this  extension we now define Hecke operators in  terms of the Hecke algebra given by the pair of groups $(\calQ(N), \Gamma_1)$. For any $(g,r)\in \calQ(N)$ and $f\in M_l(\rho_{L,1})$ we set
\begin{equation}\label{eq:hecke_op_coprime}
f\mid_{l,L} T(g,r)= \det(g)^{l/2-1}\sum_{M\in \Gamma_1\bs \Gamma_1 g \Gamma_1} f\mid_{l,L} (M,r).
\end{equation}
Instead of $T(D,1)$ and  $T(D',d)$ we write $T(D)$ and $T(D')$, respectively. 
Taking \cite{BS}, (3.5), into account, it can be easily verified that $\rho_{L,1}(D)$ and $\rho_{L,1}(D')$ are connected via 
\begin{equation}\label{eq:rel_rho_D_}
\rho_{L,1}(D') = \rho_{L,1}(R_d)\rho_{L,1}(D)=\frac{g(L)}{g_d(L)}\rho_{L,1}(D).
\end{equation}
This  immediately yields a relation between $T(D)$ and $T(D')$:
\begin{equation}\label{eq:rel_hecke_T_D}
  T(D) = \frac{g(L)}{g_d(L)}T(D').
  \end{equation}
Replacing $\rho_{L,1}$ with its dual representation gives the corresponding relation
\begin{equation}\label{eq:rel_hecke_T_D_dual}
  T(D) = \frac{g_d(L)}{g(L)}T(D').
  \end{equation}
If $(d,N)>1$, $D$ and  $D'$ do not belong to $\calQ(N)$ and the Weil representation cannot be extended to a suitable subgroup of $\GL_2^+(\Q)$ as before. 
However, it can be extended to the double coset $\Gamma_1D'\Gamma_1$ in the following way (\cite{BS}, Section 5):
\begin{equation}\label{eq:extended_weil_D}
  \rho_{L,1}^{-1}(D')\frake_{\lambda} = \frake_{d\lambda}
\end{equation}
and for $\delta = \gamma D'\gamma'\in \Gamma_1D'\Gamma_1$ we put
\begin{equation}\label{eq:extended_weil_double_coset}
  \rho_{L,1}^{-1}(\delta)\frake_\lambda = \rho_{L,1}^{-1}(\gamma')\rho_{L,1}^{-1}(D')\rho_{L,1}^{-1}(\gamma)\frake_\lambda.
  \end{equation}
Note that \eqref{eq:extended_weil_D} is compatible with the definition of $\rho_{L,1}$ in the case of $(d,N)=1$ (see \cite{BS}, Lemma 3.6).
It can be shown (see \cite{BS}, Section 5) that \eqref{eq:extended_weil_double_coset} is independent of the decomposition of $\delta$.
With these definitions in mind, we define the Hecke operator $T(D')$ exactly as in \eqref{eq:hecke_op_coprime}.
In order to define the operator $T(D)$, we set in consistency with \cite{BS}, (3.5)
\begin{equation}\label{def:weil_S_d}
\rho_{L,1}^{-1}(R_d)\frake_\lambda = \frac{g_d(L)}{g(L)}\frake_\lambda \text{ and } \rho_{L,1}(R_d)\frake_\lambda = \frac{g(L)}{g_d(L)}\frake_\lambda.  
\end{equation}
Accordingly we define
\begin{equation}\label{eq:extended_weil_D_prime}
  \rho_{L,1}^{-1}(D)\frake_\lambda = \frac{g_d(L)}{g(L)}\rho_{L,1}^{-1}(D')\frake_\lambda = \frac{g_d(L)}{g(L)}\frake_{d\lambda}. 
  \end{equation}
Since the $\rho_{L,1}^{-1}(D)$ and $\rho_{L,1}^{-1}(D')$ differ just by a constant factor, 
\begin{equation}\label{eq:extended_weil_dobule_coset_prime}
\rho_{L,1}^{-1}(\gamma D\gamma') = \frac{g_d(L)}{g(L)}\rho_{L,1}^{-1}(\gamma D'\gamma')
\end{equation}
together with  \eqref{eq:extended_weil_double_coset} defines an action of the double coset $\Gamma_1D\Gamma_1$.
Again, we find that the relation \eqref{eq:rel_hecke_T_D} holds.

Finally, note that the analogous identity to $(5.5)$ in \cite{BS} also holds for $D$. It can easily be confirmed that
\begin{equation}\label{eq:dual_weil_scalar_prod}
  \langle \frakb,\rho_{L,1}^{-1}(D)\fraka\rangle_1 = \langle \rho_{L,1}^{-1}(D^{-1})\frakb,\fraka\rangle_1
  \end{equation}
for any $\fraka,\frakb\in \C[L'/L]$.

If $d$ is coprime to $N$, one can show that $\frac{g(L)}{g_d(L)}$ is real and thus $T(D)$ is self-adjoint with respect to $(\cdot, \cdot)_1$.
If $(d,N)>1$, it is in  general not true that $\frac{g(L)}{g_d(L)}$ is a real number. In fact, it can be shown that it is an 8-th root of unity (see \cite{Sch}, Section 3, for these assertions)  In this case, in light of \cite{BS}, Theorem 5.6, we may only conclude that
\begin{equation}\label{eq:TD_self_adjoint}
  (f, g\mid_{l,L}T(D))_1 = \kappa_d(f\mid_{l,L}T(D),g)_1
\end{equation}
holds, where
\[
\kappa_d = \overline{\frac{g(L)}{g_d(L)}}\frac{g_d(L)}{g(L)}. 
\]
In both cases $T(D)$ preservers the space $S_l(\rho_{L,1})$. 

\subsection{Vector valued Eisenstein series}
Eisenstein series and Poincar\'e series are important examples of the vector valued Siegel modular forms in Definition \ref{def:vec_val_siegel_mod_form}. In this subsection we introduce  non-holomorphic versions of these series and study the analytic properties of the real analytic Eisenstein series $E_{l,0}^n$ for $\Gamma_n$ transforming with $\rho_{L,n}$. 

\begin{definition}\label{def:eisenstein_nonholom}
  Let  $l\in 2\Z,\; l\ge n+1,$ satisfy the condition $2l+\sig(L)\equiv 0\bmod{4}.$ For $\tau\in \H_n$ and $s\in \C$ we define by 
\begin{equation}\label{eq:eisenstein_nonholom}
\begin{split}
  E_{l,0}^{n}(\tau,s) &=\sum_{\gamma\in \Gamma_{n,0}\bs\Gamma_n}\det(\im(\tau))^s\frake_0\mid^{n}_{l,L}\gamma \\
  &=\det(\im(\tau))^s\sum_{\gamma\in\Gamma_{n,0}\bs\Gamma_n}|j_n(\gamma,\tau)|^{-2s}j_n(\gamma,\tau)^{-l}\rho_{L,n}^{-1}(\gamma)\frake_0.
\end{split}
\end{equation}
a vector valued non-holomorphic Siegel Eisenstein series of weight $k$ transforming with $\rho_{L,n}$. 
\end{definition}

\begin{remark}
  \begin{enumerate}
  \item[i)]
    From the formulas \eqref{eq:weil_finite_repr}, \eqref{eq:weil_m} and the assumption $2l\equiv -\sig(L)\bmod{4}$ follows  that $\det(\im(\tau))^s\frake_0$ is $\Gamma_{n,0}$-invariant with respect to the slash operator. 
  \item[ii)]
  From \cite{St}, Lemma 3.14,  follows  that $E_{l,0}^n(\tau,s)$ converges absolutely for all $\tau\in \H_n$ and all $s\in \C$ with $\re(s) > \frac{n+1-k}{2}$. The usual argument then shows that $E_{l,0}^n$ is with respect to $\mid_{k,L}$ a $\Gamma_n$-invariant real analytic function. In particular, $\tau \mapsto E_{l,0}^n(\tau,0)$ is a vector valued Siegel modular form in $M_{l}(\rho_{L,n})$.  
\item[iii)]
Although $E_{l,0}^n$ is defined on $\H_n$, we can also assign to $E_{l,0}^n(-\tau)$, $\tau\in \H_n$,   a meaningful value:
To this end, let $E_{l,0}^{n*}$ be the Eisenstein series in Definition \ref{def:eisenstein_nonholom} with respect to the dual Weil representation $\rho^*_{L,n}$. Then, since $\gamma\mapsto \widetilde{\gamma}$ is an isomorphism which preservers $\Gamma_{n,0}$, 
  \begin{align*}
    E_{l,0}^{n*}(\tau,s)= \det(\im(\tau))^s\sum_{\gamma\in\Gamma_{n,0}\bs\Gamma_n}|j_n(\widetilde{\gamma},\tau)|^{-2s}j_n(\widetilde{\gamma},\tau)^{-l}\rho_{L,n}^{*-1}(\widetilde{\gamma})\frake_0,
  \end{align*}
  where $\widetilde{\gamma}$ is given by \eqref{eq:conjuation}. Clearly, $j_n(\widetilde{\gamma},\tau) = j_n(\gamma,-\tau)$ and by Remark \ref{rem:weil_sign}, iv),
  \[
  \rho_{L,n}^{*-1}(\widetilde{\gamma}) = \rho_{L,n}^{-1}(\gamma).
  \]
  Therefore, we put
  \begin{equation}\label{eq:eisenstein_series_negative_argument}
    \begin{split}
    &  E_{l,0}^n(-\tau,s) = ((-1)^n)^{s}E_{l,0}^{n*}(\tau,s) \text{ and }\\
    &  E_{l,0}^n(\overline{\tau},s) = ((-1)^n)^{s}E_{l,0}^{n*}(-\overline{\tau},s).
      \end{split}
    \end{equation}
  With the same arguments 
  \begin{equation}\label{eq:eisenstein_series_conjugate_argument}
    \begin{split}
&     E_{l,0}^{n*}(-\tau,s) = ((-1)^n)^{s}E_{l,0}^{n}(\tau,s) \text{ and }\\  
&    E_{l,0}^{n*}(\overline{\tau},s) = ((-1)^n)^{s}E_{l,0}^{n}(-\overline{\tau},s)
    \end{split}
    \end{equation}
  is justified. 
\end{enumerate}
\end{remark}

The analytic properties of $E_{l,0}^{n}$ with respect to $s$ were investigated in \cite{St}. We summarize the results therein and specialize them to the case $n=2$. In particular, the functional equation of $E_{l,0}^{n}$ is for general $n$ quite complicated, yet for $n=2$ it is much simpler and quite clean. To state the functional equation, we need quite a bit of notation, which we describe here for convenience of the reader in full detail. Most of the notation is needed for the transformation behaviour regarding the primes $p$ dividing $|L'/L|$:

By $S_n$ we mean the symmetric group acting on $I=\{1,\dots,n\}$. Associated to  $\sigma\in S_n$ we have the quantities
\begin{align*}
  c_1(\sigma) = |\{i\in I\; |\; \sigma(i)=i\}|,\quad
  c_2(\sigma) = |\{i\in I\; |\; \sigma(i)\not=i\}|.
  \end{align*}
For any partition $I=I_0\cup \dots \cup I_s$ of $I$ into disjoint $\sigma$-stable subsets, we put
\begin{equation}\label{eq:orders}
  \begin{split}
  &n_r = |I_r|, \\
  &n(r) = \frac{n^{(r)}(n^{(r)}+1)}{2},
\end{split}
  \end{equation}
where $n^{(r)} = n_r + n_{r+1} + \cdots + n_{s}$ and $r\ge 0$. Here we understand $n^{(r)}$ to be zero if $r> s$. Also, the terms  
\begin{equation}\label{eq:constants_local_densities}
  \begin{split}
    & t(\sigma, \{I_i\})= \sum_{r=0}^s|\{(i,j)\in I_r\times I_r\; |\; i < j < \sigma(i), \; \sigma(j) < \sigma(i)\}|,\\
  & \tau(\left\{I_i\right\}) = \sum_{r=1}^s |\{(i,j)\in I_r\times (I_0\cup \cdots \cup I_{r-1})\; |\; j < i\}|,\\
    &  c_1^{(r)}(\sigma)=|\{i\in I_r\cup \cdots \cup I_s\; |\; \sigma(i) = i\}|, \; l\ge 0\\
    & \text{ and }\\
    & c_{1,r}(\sigma) = |\{i\in I_r\; |\; \sigma(i)=i\}|,\\
    & c_{2,r}(\sigma) = |\{i\in I_r\; |\; \sigma(i)> i\}|
  \end{split}
\end{equation}
will occur. Since the latter quantities are part of complicated formulas,  we drop the $(\sigma)$-part and write from now on $c_{1,r}$ and $c_{2,r}$. In terms of $c_{1,r}$ and $c_{2,r}$ the following abbreviations will also be used
\begin{align*}
  A_{i}({\bf I_r}) = \sum_{j=0}^{i-1}(\frac{1}{2}c_{1,j}+c_{2,j},  \quad B_i({\bf I_r}) = -mA_i({\bf I_r}) + n(0) - n(i), \; i=1,\dots, k.
\end{align*}
They appear as argument of the local zeta functions $\zeta_p(s)=(1-p^{-s})^{-1}$:
\begin{equation}\label{eq:local_zeta}
  D_{p,j}(s) = \zeta_p(4A_j({\bf I_r})(s-s_0)-2B_j({\bf I_r})),
\end{equation}
with $s_0=\frac{m}{2}-\rho_n$ and $\rho_n=\frac{n+1}{2}$.  
Attached to a positive integer $k\le n$  and  the  set
\[
\{\nu\}_k = \{(\nu_0=-\sum_{i=1}^{k-1}\nu_i-t, \nu_1,\dots, \nu_{k-1})\in \Z\times \N^{k-1}\; |\; t\in \N\}
\]
there are several further quantities:
Apart from
\begin{equation}\label{eq:notation}
  \begin{split}
  & S^{(k)}_r(\nu)= \sum_{i=0}^r\nu_i =  -t - \sum_{i=r+1}^{k-1}\nu_i,\quad  0\le r\le k-1, \\
& o(\nu)_k = \{0\le r\le k-1\; |\; S_r^{(k)}(\nu) \equiv 1\bmod{2}\},\\
    & e(\nu)_k = \{0\le r \le k-1\; |\; S_r^{(k)}(\nu) \equiv 0\bmod{2}\}
\end{split}
  \end{equation}
we will need the subsets
\[
\{\nu_{i_1,\dots,i_d}\}_k, \quad 0\le i_1<i_2< \dots < i_d\le k-1
\]
of $\{\nu\}_k$, which consist of all tuples $\nu$, where the components $\nu_{i_j},\; j=1,\dots,d$ run over all {\it odd} integers and the remaining components over all {\it even} integers.
To emphasize the dependency of $o(\nu)_k$ and $e(\nu)_k$ on the parity of the components of $\nu$ we will later write $o(i_1,\dots,i_d)_k$ and $e(i_1,\dots,i_d)$ respectively if $\nu\in \{\nu_{i_1,\dots,i_d}\}_k$. 
Note that there is a  misprint in \cite{St} regarding these sets. Incorrectly, $n$ is given as  the upper bound for the indices $i_j$. Whenever the indices $i_j$ appear in the original paper, $n$ should be replaced by $k-1$ as  the upper bound for $i_j$.
Based on the notation above, we use
the abbreviation 
\begin{equation}\label{eq:sum_abb}
\sum_{(\sigma, I, k)} = \sum_{\substack{\sigma\in S_n\\\sigma^2=\id}}\sum_{I=I_0\cup \cdots \cup I_s}\sum_{k=0}^{s+1},
\end{equation}
 put
 \begin{equation}\label{eq:kappa}
\begin{split}   
  \kappa_p(\sigma, I, k) &=2^{-c_1(\sigma)}(1-p^{-1})^{c_1(\sigma)+c_2(\sigma)}p^{-c_2(\sigma)}p^{-\tau(\{I_i\})-t(\sigma,\{I_i\})}\\
&  \times\frac{2^{c_1^{(k)}(\sigma)}2^{\sum_{r=0}^{k-1}c_{1,r}}(1-p^{-1})^{c_1^{(k)(\sigma)}}p^{-\sum_{r=k+1}^s n(r)}p^{\sum_{r=0}^{k-1}(c_{1,r}+2c_{2,r})}}{\prod_{r=k}^s(1-p^{-n(r)})}
\end{split}
\end{equation}
and  further introduce $\Delta_p(\sigma,q,k)=1$ for $k=0$ and
\begin{equation}\label{eq:Delta}
  \begin{split}
  \Delta_p(\sigma,q,k) &= \sum_{0\le i_1\le \dots\le i_d\le k-1}(-1)^{\sum_{r\in e(i_1,\dots, i_d)_k}c_{1,r}}\\
  &\times\leg{-1}{p}^{\frac{(m-2)}{2}\sum_{r\in o(i_1,\dots,i_d)_k}c_{1,r}}\leg{2}{p}^{\sum_{r\in e(i_1,\dots, i_d)_k}c_{1,r}} \\
  &\times p^{\sum_{j=1}^d  2A_{i_j}({\bf I_r})s-B_{i_j}({\bf I_r})},
  \end{split}
  \end{equation}
if $ L'/L\cong (\Z/p\Z)^2$ and zero if $L'/L\cong \Z/p\Z$ for $k\ge 1$. Here the sum $\sum_{0\le i_1\le \dots\le i_d\le k-1}$ runs over all subsets of $\{0,\dots, k-1\}$ indicating the positions of the odd components of $\nu$. This includes the empty set, which covers the case that all components of $\nu$ are even.  Note that the parameter $k$ in the symbol $\Delta_p(\sigma,q,k)$ in \cite{St} is missing, which could give the impression that it does not depend on this parameter. This is reason why we add it here.  

The primes $p$ not dividing $|L'/L|$ are collected in the set $P=\{q \text{ prime }\; |\; q\nmid |L'/L|\}$. For each such prime we put
\begin{equation}\label{eq:zeta_functions}
  \begin{split}
  & a_{n,p}(s) = L_p(s+\rho_n-n,\chi_{V,p})\prod_{k=1}^{[\frac{n}{2}]}\zeta_p(2s-n+2k),\\
    & b_{n,p}(s) = L_p(s+\rho_n,\chi_{V,p})\prod_{k=1}^{[\frac{n}{2}]}\zeta_p(2s+n-2k+1),\\
    \end{split}
  \end{equation}
and form the products
\begin{equation}\label{eq:prod_zeta_functions}
  \begin{split}
    & a_n^{P}(s) = \prod_{p\in P}a_{n,p}(s),\quad b_n^{P}(s) = \prod_{p\in P}b_{n,p}(s).
    \end{split}
\end{equation}
Here $L_p(s,\chi_{V,p}))$ means the local $L$-function given by $(1-\chi_{V,p}(p)p^{-s})$. 
The constant $\xi(s,n,k,P)$ then gathers the contribution of the places belonging to the  primes $p\in P$  and the archimedian place:
\begin{equation}
    \begin{split}
      &   \xi(s,n,k,P) = \frac{(-1)^{k/2}2^{(1-s)n}\pi^{\frac{n(n+1)}{2}}}{|L'/L|^{n/2}}\frac{\Gamma_n(s)}{\Gamma_n(\alpha)\Gamma_n(\beta)}\frac{a_{n}^{P}(s)}{b_{n}^{P}(s)},
    \end{split}
  \end{equation}
where $\alpha = \frac{1}{2}(s+\rho_n+l)$ and $\beta=\frac{1}{2}(s+\rho_n-l)$. We are now ready to state the analytic properties of $E_{l,0}^{n}$.

\begin{theorem}\label{thm:functional_eq_E_n}
  Let $E_{l,0}^{n}(\tau,s)$ be the Eisenstein series as in Definition \ref{def:eisenstein_nonholom} with respect to $\rho_{L,n}$. Then $E_{l,0}^{n}(\tau,s)$ has a meromorphic continuation in $s$ to the whole complex plane. If $L'/L$ is anisotropic and $|L'/L|$  odd, it satisfies the functional equation
\begin{equation}\label{eq:func_eq_vecval_eisteinstein}
\begin{split}
    E_{l,0}^{n}(\tau,s-\frac{l}{2})&=
  \xi(2s-\rho_n,n,l,P)\prod_{p\mid |L'/L|}\sum_{(\sigma,I,k)}\kappa_p(\sigma,I,k)\Delta_p(\sigma,q,k)\\&\times \prod_{j=1}^{k}(D_{p,j}(2s-\rho_n)-1) E_{l,0}^{n}(\tau,\rho_n-s-\frac{l}{2}). 
  \end{split}    
\end{equation}
In this formula we interpret any occurring sum or product with lower index bigger than the upper index to be zero or one, respectively. 
  \end{theorem}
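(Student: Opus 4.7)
The plan is to derive both the meromorphic continuation and the functional equation from an explicit Fourier expansion of $E_{l,0}^n(\tau,s)$ obtained by unfolding the sum over $\Gamma_{n,0}\bs \Gamma_n$. First I would parametrise $\Gamma_{n,0}\bs \Gamma_n$ by the bottom block $(c,d)$ of a symplectic matrix and then apply Poisson summation in the Iwasawa $x$-coordinate. This produces a Fourier expansion of the shape
\[
E_{l,0}^n(\tau,s) = \det(y)^{s}\frake_0 + \sum_{T\in \Lambda_n}\sum_{\lambda\in(L'/L)^n} c(T,\lambda,y,s)\, e(\tr(Tx))\,\frake_\lambda,
\]
in which each Fourier coefficient factors as an archimedean integral times an Euler product of local densities. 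The archimedean factor reduces to a confluent hypergeometric function of matrix argument, whose analytic theory (following Shimura) supplies the Gamma ratio $\Gamma_n(s)/(\Gamma_n(\alpha)\Gamma_n(\beta))$ together with a clean functional equation in $s$.

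Next I would evaluate the non-archimedean factors. At primes $p\nmid |L'/L|$ the Weil representation is trivial on $\Sp(n,\Z_p)$, so the local factor is a classical Siegel representation density; its standard evaluation yields exactly the ratio $a_{n,p}(s)/b_{n,p}(s)$ and hence the products $a_n^{P}(s)$ and $b_n^{P}(s)$ appearing in $\xi$. At the bad primes $p\mid |L'/L|$ one has to evaluate $\rho_{L,n}^{-1}(\gamma)\frake_0$ explicitly on a Bruhat-style cell decomposition of $\Sp(n,\Z_p)$ indexed by involutions $\sigma\in S_n$ together with $\sigma$-stable partitions $I=I_0\cup\cdots\cup I_s$; this is what produces the combinatorial sum $\sum_{(\sigma,I,k)}$, in which $\kappa_p(\sigma,I,k)$ records cell volumes and the normalising factors coming from $\rho_{L,n}$, while $\Delta_p(\sigma,q,k)$ packages the relevant Gauss sums over the $p$-part of $L'/L$. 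The hypotheses that $L'/L$ be anisotropic and of odd order are used precisely here: they force the local discriminant form at each bad $p$ to be of the simple types $\Z/p\Z$ or $(\Z/p\Z)^2$, which is exactly what collapses the combinatorics to the stated clean formula. The subtracted terms $D_{p,j}(s)-1$ arise by peeling off the $k=0$ stratum from the total contribution.

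The global functional equation is then assembled place by place: at $\infty$ from the reflection formula for $\Gamma_n$ together with the Kummer-type transformation of the matrix-argument confluent hypergeometric function, at the good primes $p\in P$ from the classical functional equations of $\zeta_p$ and $L_p(s,\chi_{V,p})$, and at the bad primes by a direct verification that $\kappa_p(\sigma,I,k)\Delta_p(\sigma,q,k)\prod_j(D_{p,j}(s)-1)$ is compatible with the substitution $s\mapsto \rho_n - s$. The meromorphic continuation in $s$ then follows term by term because every factor in the resulting closed expression is a known meromorphic function, and a standard estimate on the tails of the Fourier expansion promotes this to a global continuation on the region of absolute convergence.

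The main obstacle is the bad-prime computation: one has to simultaneously track the cell decomposition of $\Sp(n,\Z_p)$, the action of $\rho_{L,n}^{-1}$ on cell representatives, and the dependence on the $p$-part of the discriminant form. Under the anisotropy and odd-order assumptions the bookkeeping collapses to the combinatorial data $(\sigma,I,k)$ together with the subsets $\{\nu_{i_1,\ldots,i_d}\}_k$ encoding parities of the relevant exponents, but verifying that the archimedean and non-archimedean local functional equations assemble \emph{exactly} to the stated global identity, with the precise sign $(-1)^{k/2}$, the correct power of $\pi$, and the normalising factor $|L'/L|^{-n/2}$ absorbed into $\xi(2s-\rho_n,n,l,P)$, is the technical heart of the argument and requires the careful enumeration of the sets $o(\nu)_k$ and $e(\nu)_k$.
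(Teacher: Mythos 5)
You should first note that the paper does not actually prove this theorem: it is imported verbatim from the companion paper \cite{St} (``The analytic properties of $E_{l,0}^{n}$ with respect to $s$ were investigated in \cite{St}. We summarize the results therein\ldots''), and the only thing proved here is the specialization to $n=2$ in Corollary \ref{cor:functional_eq_E_2}, by explicitly evaluating the quantities $K_p(\tau,I',k)$. So there is no in-paper proof to compare against. That said, your sketch describes the strategy that the notation of the statement makes essentially inevitable (Fourier expansion by unfolding and Poisson summation, archimedean confluent hypergeometric factor producing $\Gamma_n(s)/(\Gamma_n(\alpha)\Gamma_n(\beta))$, good primes producing $a_{n,p}(s)/b_{n,p}(s)$, bad primes producing the combinatorial sum over $(\sigma,I,k)$), and your observation that anisotropy plus odd order forces $(L'/L)_p\cong\Z/p\Z$ or $(\Z/p\Z)^2$ is correct and is indeed where those hypotheses enter.

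However, as a proof the proposal has a genuine gap, and it is exactly the part you defer. The functional equation asserts that $E_{l,0}^{n}(\tau,s-\frac l2)$ equals a single $s$-dependent scalar times $E_{l,0}^{n}(\tau,\rho_n-s-\frac l2)$. In the Fourier-expansion approach this requires showing that every Fourier coefficient $c(T,\lambda,y,s)$ transforms under $s\mapsto\rho_n-s$ with a factor that is \emph{independent of $T$, of $\lambda$, and of $y$}. The local functional equations you invoke (Katsurada-type functional equations of the local Siegel series at finite places, and the Kummer-type transformation of the matrix-argument Whittaker function at $\infty$) each carry $T$-dependent factors, and in the vector-valued setting the bad-prime densities additionally depend on $\lambda$ through Gauss sums on $(L'/L)_p$; the content of the theorem is that all of these dependencies cancel and leave precisely $\xi(2s-\rho_n,n,l,P)\prod_{p\mid|L'/L|}\sum_{(\sigma,I,k)}\kappa_p\Delta_p\prod_j(D_{p,j}-1)$. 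Your sketch asserts this cancellation (``assembled place by place'', ``a direct verification'') without giving any mechanism for it, and likewise posits the formulas for $\kappa_p$ and $\Delta_p$ rather than deriving them from the cell decomposition. Since these verifications are the entire substance of the theorem rather than routine bookkeeping, the proposal is a correct plan of attack but not a proof; to make it one you would have to carry out the bad-prime density computation and exhibit the $T$- and $\lambda$-independence of the global $\gamma$-factor explicitly, which is what \cite{St} is cited for.
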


As a corollary we phrase now the corresponding theorem for the case $n=2$. To this end, we introduce some quantities, which comprise the contribution of the primes $p$ dividing $|L'/L|$:
We write $S_2 = \{\id, \sigma\}$ and use the symbol $I_0$ in two ways. On the one hand, it means $I_0=\{1,2\}$ if the partition of $I$ consists only of $I_0$. On the other hand, it means $I_0= \{1\}$ if the partition of $I$ is of the form $I=I_0\cup I_1 = \{1\}\cup \{2\}$. Further, by $(L'/L)_p$ we mean the $p$-component of $L'/L$. For $\tau\in S_2$, any ($\tau$-stable) decomposition $I'$ of $I=\{1,2\}$ and $k\in\{0,1,2\}$ we give explicit expressions for
\begin{equation}\label{eq:local_factor_func_eq_implicit}
K_p(\tau,I',k) = \kappa_p(\tau,I',k)\Delta_p(\tau,q,k)\prod_{j=1}^k(D_{p,j}(2s-\frac{3}{2}) - 1):
\end{equation}
\begin{equation}\label{eq:local_factor_func_eq_1}
  \begin{split}
    &  K_p(\id,I_0,0) = \frac{(1-p^{-1})^4}{1-p^{-3}},\\
    & K_p(\id,I_0\cup I_1,0) = \frac{p^{-1}(1-p^{-1})^3}{1-p^{-3}},\\
    & K_p(\sigma,I_0,0) = \frac{p^{-2}(1-p^{-1})^2}{1-p^{-3}}.
  \end{split}
\end{equation}
For $k=1$ and $k=2$ we have to distinguish the cases $(L'/L)_p\cong (\Z/p\Z)^2$ and $(L'/L)_p\cong \Z/p\Z$. In the latter case we have $K_p(\tau,I',k) = 0$ for all listed quantities in \eqref{eq:local_factor_func_eq_2}. Therefore, the following identities hold only for $(L'/L)_p\cong (\Z/p\Z)^2$.
\begin{equation}\label{eq:local_factor_func_eq_2}
  \begin{split}
    &  K_p(\id,I_0,1) = 2(1-p^{-1})^4(\zeta_p(8s+6)-1),\\
    &  K_p(\sigma, I_0,1)= 2(1-p^{-1})^2(\zeta_p(16s-6)-1),\\
    &  K_p(\id,I_0\cup I_1,1) = p(1-p^{-1})^2((-1)\leg{2}{p}+ \leg{-1}{p}^{(m-2)/2})(\zeta_p(8s-6)-1),\\
    & K_p(\id,I_0\cup I_1,2) =\\
    &4p^2(1-p^{-1})^3(1+(-1)\leg{-1}{p}^{(m-2)/2}\leg{2}{p}p^{s+\frac{m}{2}-2})(\zeta_p(4s-4)-1)(\zeta_p(8s-6)-1).
    \end{split}
  \end{equation}

\begin{corollary}\label{cor:functional_eq_E_2}
  Let $E_{l,0}^{2}(\tau,s)$ be the Eisenstein series as in Definition \ref{def:eisenstein_nonholom} with respect to $\rho_{L,2}$. Then $E_{l,0}^{2}(\tau,s)$ has a meromorphic continuation in $s$ to the whole complex plane. If $L'/L$ is anisotropic and $|L'/L|$  odd, it satisfies the functional equation
  \begin{equation}\label{eq:func_eq_E_2}
    E_{l,0}^{2}(\tau,s-\frac{l}{2})=
    \xi(2s-\frac{3}{2},2.l,P)\prod_{p\mid |L'/L|}\left(C_p(\id, I_0) + C_p(\id,I_0\cup I_1)+ C_p(\sigma,I_0)\right)E_{l,0}^{2}(\tau, \frac{3}{2}-s-\frac{l}{2}),
  \end{equation}
  Here
  \[
  \xi(2-\frac{3}{2},2.l,P) = \frac{(-1)^{l/2}4^{1-s}\pi^3}{|L'/L|}\frac{\Gamma_2(s)}{\Gamma_2(\alpha)\Gamma_2(\beta)}\frac{a_2^P(s)}{b_2^P(s)}
    \]
    with the same meaning as above and
    \begin{align*}
      & C_p(\id,I_0)=K_p(\id,I_0,0) + K_p(\id,I_0,1),\\
      & C_p(\id,I_0\cup I_1) = K_p(\id,I_0\cup I_1,0) + K_p(\id,I_0\cup I_1,1) + K_p(\id,I_0\cup I_1,2),\\
      & C_p(\sigma,I_0) = K_p(\sigma,I_0,0) + K_p(\sigma,I_0,1).
    \end{align*}    
\end{corollary}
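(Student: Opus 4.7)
The plan is to specialize the functional equation of Theorem \ref{thm:functional_eq_E_n} to $n=2$. With $\rho_2 = 3/2$, the archimedean prefactor $\xi(2s - 3/2, 2, l, P)$ is exactly the expression stated in the corollary, so only the product of local contributions at primes $p \mid |L'/L|$ requires explicit evaluation. The combinatorial sum $\sum_{(\sigma, I, k)}$ in \eqref{eq:sum_abb} collapses to a finite enumeration: $S_2$ contains precisely two involutions, namely $\id$ and $\sigma = (1\,2)$; the $\sigma$-stable partitions of $\{1,2\}$ are $I_0 = \{1,2\}$ (for both involutions) and the finer $I = I_0 \cup I_1 = \{1\} \cup \{2\}$ (only for $\id$). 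With $k$ ranging over $\{0, \ldots, s+1\}$, this produces exactly the seven triples listed in \eqref{eq:local_factor_func_eq_1}--\eqref{eq:local_factor_func_eq_2}.

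For each such triple I would directly substitute the combinatorial data $c_1(\sigma)$, $c_2(\sigma)$, $n_r$, $n(r)$, $\tau(\{I_i\})$, $t(\sigma,\{I_i\})$, $c_{1,r}$, $c_{2,r}$, $c_1^{(r)}(\sigma)$, $A_i(\{\mathbf{I_r}\})$, $B_i(\{\mathbf{I_r}\})$ into the definitions \eqref{eq:kappa}, \eqref{eq:Delta}, and \eqref{eq:local_zeta}, then multiply by $\prod_{j=1}^{k}(D_{p,j}(2s-3/2)-1)$ to obtain $K_p(\sigma, I', k)$. For example, in the simplest case $(\id, I_0, 0)$ one reads off $c_1(\id) = 2$, $c_2(\id) = 0$, $s=0$, $n(0)=3$, $\tau(\{I_i\}) = t(\sigma,\{I_i\}) = 0$; the factor $\Delta_p$ equals $1$ (empty product) and the product of $(D_{p,j}-1)$'s is also empty, giving
\[
K_p(\id,I_0,0) \;=\; 2^{-2}(1-p^{-1})^{2}\cdot\frac{4(1-p^{-1})^{2}}{1-p^{-3}} \;=\; \frac{(1-p^{-1})^{4}}{1-p^{-3}},
\]
exactly as claimed. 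The remaining six triples are analogous bookkeeping computations, and regrouping the resulting seven local factors according to the pair $(\sigma, I')$ then produces the three sums $C_p(\id,I_0)$, $C_p(\id,I_0\cup I_1)$, $C_p(\sigma,I_0)$ as defined.

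The main technical obstacle lies in evaluating $\Delta_p(\sigma, q, k)$ for $k \ge 1$, because its definition \eqref{eq:Delta} bifurcates according to whether $(L'/L)_p \cong (\Z/p\Z)^2$ or $(L'/L)_p \cong \Z/p\Z$. In the cyclic case $\Delta_p = 0$ by definition, which is precisely why only the rank-two expressions appear in \eqref{eq:local_factor_func_eq_2}. In the rank-two case one must carefully sum over the subsets $\{0 \le i_1 < \cdots < i_d \le k-1\}$ encoding the positions of the odd components of $\nu$, tracking the parity sets $o(i_1, \ldots, i_d)_k$, $e(i_1, \ldots, i_d)_k$ together with the quadratic residue symbols $\leg{-1}{p}$, $\leg{2}{p}$ and the powers of $p$ produced by the $A_{i_j}, B_{i_j}$ data. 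Once this bookkeeping is executed and combined with the Euler factors $(D_{p,j}-1)$, the identities \eqref{eq:local_factor_func_eq_1}--\eqref{eq:local_factor_func_eq_2} drop out, and substituting into \eqref{eq:func_eq_vecval_eisteinstein} with $n=2$ yields the asserted functional equation \eqref{eq:func_eq_E_2}.
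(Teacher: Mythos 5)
Your proposal is correct and follows essentially the same route as the paper: specialize Theorem \ref{thm:functional_eq_E_n} to $n=2$ (so $\rho_2=3/2$), enumerate the seven admissible triples $(\tau,I',k)$, and evaluate $\kappa_p$, $\Delta_p$, and the $D_{p,j}$ factors by direct substitution, with the cyclic case $(L'/L)_p\cong\Z/p\Z$ killing the $k\ge 1$ terms. Your worked example for $(\id,I_0,0)$ matches the paper's value, and the remaining cases are exactly the "direct but tedious calculation" the paper itself invokes.
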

\begin{proof}
  As indicated before, for the statement of Corollary \ref{cor:functional_eq_E_2} we have to calculate $K_p(\tau,I',k)$ in \eqref{eq:local_factor_func_eq_implicit} for all $\tau\in S_2$, any $\tau$-stable partition $I'$ of $I=\{1,2\}$ and all $k\in \{0,1,2\}$. Since $\Delta_p(\tau,q,0) = 1$ and the product $\prod_{j=1}^0(D_{p,j}(2s-\frac{3}{2})-1)$ is empty and thus equal to 1, we have
  \[
  K_p(\tau,I',0) = \kappa_p(\tau,I',0).
  \]
  By the definition of $\Delta_p(\tau,q,0)$, this identity holds for both possible cases of $(L'/L)_p$.
  The expressions for $K_p(\tau,I',0)$ in \eqref{eq:local_factor_func_eq_1} can be verified by a direct but tedious calculation.
  Again, the definition of $\Delta_p(\tau,q,k)$ for $k=1,2$ yields $K_p(\tau,I',k)=0$ for all $\tau\in S_2$, all partitions $I'$ of $I$ and $(L'/L)_p\cong \Z/p\Z$. 
  It remains to confirm the equations in \eqref{eq:local_factor_func_eq_2}. This is also a straightforward but tedious  calculation.
  For all these computations we have to take the convention in Theorem \ref{thm:functional_eq_E_n} into account. 
  \end{proof}

\begin{remark}\label{rem:functional_equation_dual}
Note that by Remark 3.18 in \cite{St} the functional equation \eqref{eq:func_eq_E_2} remains valid for the Eisenstein series $E_{l,0}^{2*}$. 
\end{remark}

Since the case $n=2$ lies mainly in our interest, we stick with it for the rest of this section and introduce now  a vector valued analogue of the Poincar\'e series in \cite{Bo1}, Section 2.2, and a related Poincar\'e series. To this end, let $\calH = \H\cup \H^-$, where $\H^-$ is the lower complex half-plane.

\begin{definition}\label{def:poincare_series}
  Let $d\in \Z$ be a positive integer, $D=\kzxz{d}{0}{0}{d^{-1}}\in \GL_2(\Q)^+$, $l\in \Z$ and $s\in \C$ with $\re(s) > \frac{3-k}{2}$. 

\begin{enumerate}
    \item[i)]
    $P_{l}:\H\times \H\rightarrow \C[L'/L]\otimes\C[L'/L]$ is defined by
  \begin{equation}\label{eq:poincare_series_1}
    P_{l}(\tau,\zeta,s)=\sum_{\lambda\in L'/L}\sum_{\gamma\in \Gamma_1}\det\left(\im\kzxz{\tau}{0}{0}{\zeta}\right)^s(\tau+\zeta)^{-l}|\tau+\zeta|^{-2s}\frake_\lambda\otimes\frake_\lambda\mid_{l,L} \gamma_{(\tau,1)},
  \end{equation}
  where the subscript $(\tau,1)$ means that $\gamma$  acts on the first component and with respect to the variable $\tau$. We further define a variant of $P_l$, which occurs in the course of the present paper. 
  It  already appeared in slightly less general  scalar valued  form in Poincar\'e's work (see \cite{Le}).
  
  $P_l^+:\calH\times \calH\rightarrow \C[L'/L]\otimes\C[L'/L]$ with
  \begin{equation}\label{eq:dual_poincare_series_1}
    P^+_{l}(\tau,\zeta,s)=\sum_{\lambda\in L'/L}\sum_{\gamma\in \Gamma_1}\det\left(\im\kzxz{\tau}{0}{0}{\zeta}\right)^s(\tau-\zeta)^{-l}|\tau-\zeta|^{-2s}\frake_\lambda\otimes\frake_\lambda\mid_{l,L} \gamma_{(\tau,1)}.
  \end{equation}

\item[ii)]
  Associated to $P_{l}$  and its variant we define
  \begin{equation}\label{eq:poincare_series_2}
    \begin{split}
        \mathscr{P}_l:\H\times\H\rightarrow \C[L'/L]\otimes\C[L'/L],\quad
      \mathscr{P}_l(\tau,\zeta,D,s)= \sum_{M\in \Gamma_1\bs \Gamma_1D\Gamma_1}P_{l}(\tau,\zeta,s)\mid_{l,L}^*M_{(\zeta,2)},
    \end{split}
    \end{equation}
where the subscript $(\zeta,2)$ indicates that $M$  acts on the second component and with respect to the variable $\zeta$ and $\rho^*_{L,1}$.  Corresponding to $P_l^+$ we put 
\begin{equation}\label{eq:poincare_series_plus}
    \begin{split}
        \mathscr{P}^+_l:\calH\times\calH\rightarrow \C[L'/L]\otimes\C[L'/L],\quad
      \mathscr{P}^+_l(\tau,\zeta,D,s)= \sum_{M\in \Gamma_1\bs \Gamma_1D\Gamma_1}P^+_{l}(\tau,\zeta,s)\mid_{l,L}^*M_{(\zeta,2)}.
    \end{split}
\end{equation}
    \end{enumerate}
\end{definition}

\begin{remark}\label{rem:poincare_series}
  \begin{enumerate}
    \item[i)]
The series $P_l$ can be understood as vector valued version of the Poincar\'e series $P_r^k(Z,W,g,s)$ introduced in \cite{Bo1}, Section 2.2. It can also be interpreted as the non-holomorphic variant of the series $h_{\beta,n}$ defined in \cite{RS}, Def. 5.1.
In terms of  the coefficients of $\rho_{L,1}$ the component function $P_{l}(\tau,\zeta,s)_\mu$ can be written as
  \begin{equation}\label{eq:component_poincare_1}
   \det(\im\kzxz{\tau}{0}{0}{\zeta})^s \sum_{\gamma\in \Gamma_1}\langle \rho_{L,1}^{-1}(\gamma)\frake_\lambda,\frake_\mu\rangle j(\gamma,\tau)^{-l}|j(\gamma,\tau)|^{-2s}(\gamma\tau+\zeta)^{-l}|\gamma\tau+\zeta|^{-2s}.
  \end{equation}
  Since $\rho_{L,1}$ factors through the finite group $\SL_2(\Z/N\Z)$, the coefficient $\rho_{\lambda,\mu}^{-1}(\gamma)$ is bounded on $\Gamma_1$. Thus, for matters of convergence of \eqref{eq:component_poincare_1} it suffices to study the
  scalar valued Poincar\'e series
  \[
  \det(\im\kzxz{\tau}{0}{0}{\zeta})^s \sum_{\gamma\in \Gamma_1}j(\gamma,\tau)^{-l}|j(\gamma,\tau)|^{-2s}(\gamma\tau+\zeta)^{-l}|\gamma\tau+\zeta|^{-2s},
  \]
  which is exactly the series $P_1^l(\tau,\zeta,g,s)$ in \cite{Bo1}, Section 2.2, for $g\in \Gamma_1$. It follows from \cite{Bo1}, Sec. 2.2, that each component function of   $P_{l}(\tau,\zeta,s)_\mu$ is absolutely and uniformly convergent for all $s\in \C$ with  $\re(s) > \frac{3-l}{2}$ on any product $V_1(\delta)\times V_1(\delta)$, where
    \[
  V_1(\delta) = \{z = x+iy\in \H\; |\; y\ge \delta \text{ and } x^2\le \frac{1}{\delta}\}
  \]
  with  $\delta>0$, and thereby represents a real analytic function on $\H^2$. Thus, the usual argument shows that $P_l$ transforms under the action of $\Gamma_1$ with respect to the variable $\tau$ like a vector valued modular form of weight $l$ and type $\rho_{L,1}$. 

  Since the sum $\sum_{M\in \Gamma_1(D)\bs \Gamma_1}$ is finite, the same holds for the Poincar\'e series $\mathscr{P}_l$. In terms of the function $\varphi_{l,s}$ (see \eqref{def:holom_func_abbr}), 
  we may write $\mathscr{P}_{l}$ in the following more explicit form
  \begin{equation}
    \begin{split}
      &\sum_{\lambda\in L'/L}\sum_{M\in \Gamma_1(D)\bs \Gamma_1}\\
      &\times\sum_{\gamma\in \Gamma_1}\det(\im\kzxz{\tau}{0}{0}{\zeta})^s\varphi_{l,s}(j(\gamma,\tau)j(M,\zeta)(\gamma\tau+M\zeta))\rho_{L,1}^{-1}(\gamma)\frake_\lambda \otimes \rho_{L,1}^{*-1}(M)\frake_\lambda.
      \end{split}
  \end{equation}
\item[ii)]
  The series $P_l^+$ has similar  properties  as $P_l$. More specifically, it converges absolutely and uniformly on compact subsets of $\calH\times \calH$.
  It is thereby real-analytic on $\calH\times \calH$, leaving out possible poles. Indeed, if $\tau,\zeta$ are both in the upper or lower half-plane, $P_l^+$ has  a pole at $\tau = \zeta$. In all other cases no poles can occur.  Also, the usual argument shows that $P_l^+$ transforms with respect to $\tau\in \H$ like a vector-valued modular form.

  If $\tau$ is an element of $\H$, the before mentioned statements can be deduced from Theorem 3A and Section 3C in Chapter V of \cite{Le}. 

 If $\tau\in\H^-$ and $\zeta\in \H$, $P_l^+$ inherits  the analytic properties of $P_l$. To be specific, by  employing the same argument as in i), we may estimate each of the component functions of $P_l^+$ by 
  \begin{align*}
    \sum_{\gamma\in \Gamma_1}|j(\gamma,\tau)|^{-2s-l}|(\gamma \tau-\zeta)|^{-l-2s} &=  \sum_{\gamma\in \Gamma_1}|j(\gamma,-z)|^{-2s-l}|(\gamma(-z)-\zeta)|^{-l-2s}\\
  &=  \sum_{\gamma\in \Gamma_1}|j(\widetilde{\gamma},z)|^{-2s-l}|(\widetilde{\gamma}z+\zeta)|^{-l-2s},
  \end{align*}
  where we replaced $\tau\in \H^-$ with $-z$, $z \in \H$ and used the fact that $\gamma(-z) = -(\widetilde{\gamma}z)$. The latter sum 
  is up to a constant nothing else but an estimate of the series \eqref{eq:component_poincare_1}.
  If $\tau, \zeta$ are both in $\H^-$, the same reasoning as before shows that $P_l^+$ is bounded by
  \[
  \sum_{\gamma\in \Gamma_1}|j(\widetilde{\gamma},z)|^{-2s-l}|(\widetilde{\gamma}z+\zeta)|^{-l-2s}
  \]
  with $z\in \H$ and $\zeta\in \H^-$. This is up to a constant  an estimate of $P^+_{l|\H\times \H^-}$.
Again, in all considered cases the Poincar\'e series $\mathscr{P}_l^+$ shares the same properties as $P_l^+$. 
    \end{enumerate}
\end{remark}

The following theorem provides some further properties of $P_l^+$ and  $\mathscr{P}_l^+$, which will be vital  later on. 

\begin{theorem}\label{thm:properties_poincare}
  Let $l\in \Z$ and $s\in \C$ with $\re(s)>\frac{3-l}{2}$,  $f\in S_l(\rho_{L,1})$ and
  \begin{equation}\label{eq:constant_reproducing}
    C(l,s) = (-1)^{\frac{l}{2}}2^{2-2s-k}\pi\frac{\Gamma(s+1)}{\Gamma(s+2)}.
      \end{equation}
  \begin{enumerate}
  \item[i)]
    Then we have
    \begin{equation}\label{eq:reproducing_kernel_1}
      \sum_{\lambda\in L'/L}\left(\int_{\Gamma_1\bs \H}\langle f(\tau)\otimes \frake_\lambda,P^+_l(\tau,\overline{\zeta},\overline{s})\rangle_2\im(\tau)^ld\mu(\tau)\right)\frake_\lambda = (-1)^{-s}C(l,s)f(\zeta)
    \end{equation}
    for all $\zeta\in \H$. 
  \item[ii)]
    Let $D = \kzxz{d}{0}{0}{d^{-1}}\in \GL_2(\Q)^+$  and $T(D)$ the Hecke operator defined in Subsection \ref{subsec:petersson_prod_hecke_op}. Then
    \begin{equation}\label{eq:reproducing_kernel_hecke_op}
      \sum_{\lambda\in L'/L}\left(\int_{\Gamma_1\bs \H}\langle f(\tau)\otimes \frake_\lambda,\mathscr{P}^+_l(\tau,\overline{\zeta},D,\overline{s})\rangle_2\im(\tau)^ld\mu(\tau)\right)\frake_\lambda = (-1)^{-s}C(l,s)(f\mid_{l,L}T(D))(\zeta)
    \end{equation}
    for all $\zeta\in \H$. 
  \end{enumerate}
\end{theorem}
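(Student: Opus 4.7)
\emph{Proof proposal.}
For part (i) the argument is a vector-valued adaptation of the classical unfolding trick for Poincar\'e series. Substitute the defining series \eqref{eq:dual_poincare_series_1} of $P_l^+$ into the integrand and compute the scalar product $\langle f(\tau)\otimes\frake_\lambda,\cdot\rangle_2$ via \eqref{eq:scalar_product_rel}: the Kronecker pairing in the second tensor slot isolates the $\lambda$-th basis vector, and in the first slot the modular transformation $\rho_{L,1}(\gamma)f(\tau)=j(\gamma,\tau)^{-l}f(\gamma\tau)$ together with unitarity of $\rho_{L,1}$ reduces $\langle f(\tau),\rho_{L,1}^{-1}(\gamma)\frake_\lambda\rangle_1$ to $j(\gamma,\tau)^{-l}f_\lambda(\gamma\tau)$, where $f_\lambda$ denotes the $\frake_\lambda$-component of $f$. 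Principal-branch evaluation of $\det(\im\kzxz{\gamma\tau}{0}{0}{\overline{\zeta}})^{\overline{s}}=(-\im(\gamma\tau)\im\zeta)^{\overline{s}}$ produces the overall factor $(-1)^{-s}\im(\zeta)^s$ after conjugation. Collecting the powers of $|j(\gamma,\tau)|$ together with the Jacobian $dx\,dy=|j(\gamma,\tau)|^4\,dx'\,dy'$ under $\tau'=\gamma\tau$ shows that the integrand depends only on $\tau'$, so the sum over $\gamma\in\Gamma_1$ unfolds the fundamental-domain integral onto all of $\H$:
\[
I_\lambda(\zeta)=(-1)^{-s}\im(\zeta)^s\int_{\H}f_\lambda(\tau)(\overline{\tau}-\zeta)^{-l}|\tau-\overline{\zeta}|^{-2s}\im(\tau)^{s+l-2}\,dx\,dy.
\]
This scalar-valued reproducing integral is evaluated by substituting the Fourier expansion of the cusp form $f_\lambda$ and translating $x\mapsto x+\re\zeta$; the inner $x$-integral is a standard Fourier transform of $(\xi-ia)^{-\alpha}(\xi+ia)^{-\beta}$, computable in $\Gamma$-functions (see \cite{Le}, Chapter V, or \cite{Bo1}, Section 2.2), and the remaining $\eta$-integration yields $C(l,s)\im(\zeta)^{-s}f_\lambda(\zeta)$. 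Summing $\sum_\lambda f_\lambda(\zeta)\frake_\lambda=f(\zeta)$ completes (i).

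Part (ii) reduces to (i) through the finite expansion $\mathscr{P}_l^+=\sum_{M\in\Gamma_1\bs\Gamma_1D\Gamma_1}P_l^+\mid_{l,L}^*M_{(\zeta,2)}$; since the slash affects only the $\zeta$-variable and the second tensor slot, the finite sum commutes with the $\tau$-integral. For each coset representative $M=\gamma D\gamma'$, the slash contributes $j(M,\overline{\zeta})^{-l}$, substitutes $M\overline{\zeta}$ for $\overline{\zeta}$ in $P_l^+$, and applies $\rho_{L,1}^{*-1}(M)$ to the second tensor factor. The identity $\rho_{L,1}^{*-1}=\rho_{L,1}$ from Remark \ref{rem:weil_sign}(iii), together with the duality \eqref{eq:dual_weil_scalar_prod} iterated along the decomposition $M=\gamma D\gamma'$, transfers this action from the second tensor slot onto the cusp form $f$. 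Applying part (i) with $M\zeta$ in place of $\zeta$ to each summand and collecting then gives
\[
(-1)^{-s}C(l,s)\sum_{M\in\Gamma_1\bs\Gamma_1D\Gamma_1}j(M,\zeta)^{-l}\rho_{L,1}^{-1}(M)f(M\zeta)=(-1)^{-s}C(l,s)(f\mid_{l,L}T(D))(\zeta),
\]
by $\det(D)=1$ and the definition of $T(D)$ in Section \ref{subsec:petersson_prod_hecke_op}.

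The main technical obstacle is the explicit computation of the scalar $x$-Fourier transform after unfolding in part (i), with careful tracking of the principal branch of $(-1)^{\overline{s}}$ that arises from $\im(\overline{\zeta})<0$. A secondary subtlety in part (ii) is the non-unitarity of the extended representation $\rho_{L,1}$ on the double coset $\Gamma_1D\Gamma_1$ when $(d,N)>1$: the ordinary adjointness of the Petersson pairing must be replaced by \eqref{eq:dual_weil_scalar_prod}, and the correct combinatorics of this substitution has to be verified along each decomposition $M=\gamma D\gamma'$.
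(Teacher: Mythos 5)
Your proposal is correct and follows essentially the same route as the paper: expand the $\langle\cdot,\cdot\rangle_2$ pairing via \eqref{eq:scalar_product_rel} so the second tensor slot isolates the $\lambda$-component, unfold the $\Gamma_1$-sum against the fundamental-domain integral, and evaluate the resulting scalar reproducing integral by the proposition in \cite{Bo1}, Section 2.2 (the paper cites the unfolding in \cite{RS}, Theorem 5.6, rather than carrying out the Fourier computation, but this is the same calculation). Part (ii) is likewise handled identically, by commuting the finite sum over $\Gamma_1\bs\Gamma_1D\Gamma_1$ with the integral, applying the case (i) evaluation at $M\zeta$, and recognizing $\sum_M j(M,\zeta)^{-l}\rho_{L,1}^{-1}(M)f(M\zeta)$ as $(f\mid_{l,L}T(D))(\zeta)$.
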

\begin{proof}
  $i)$: 
 It can easily be confirmed by direct calculation using the relation \eqref{eq:scalar_product_rel} that
  \begin{equation}\label{eq:scalar_prod_integral_rel}
    \begin{split}
&  \langle f(\tau)\otimes \frake_\lambda,P_l(\tau,\overline{\zeta},s)\rangle_2 =\\
    & \langle f(\tau),\sum_{\mu\in L'/L}\sum_{\gamma\in \Gamma_1}\det\left(\im\kzxz{\tau}{0}{0}{\overline{\zeta}}\right)^{\overline{s}}(\tau-\overline{\zeta})^{-l}|\tau-\overline{\zeta}|^{-2\overline{s}}\frake_\mu\mid_{l,L}\gamma\rangle_1\langle \frake_\lambda\otimes\frake_\mu\rangle_1  
    \end{split}
    \end{equation}
  The last expression shows that the integral in \eqref{eq:reproducing_kernel_1} as Petersson scalar product $(\cdot,\cdot)_1$ is well defined since $f$ is a cusp form and $P^+_l$ transforms like a modular form in $M_l(\rho_{L,1})$ with respect to $\tau$. Using \eqref{eq:scalar_prod_integral_rel} the left-hand side of \eqref{eq:reproducing_kernel_1} becomes
  \begin{equation}\label{eq:integral_expr}
\sum_{\lambda\in L'/L}\left(\int_{\Gamma_1\bs \H}\langle f(\tau),\sum_{\gamma\in \Gamma_1}\det\left(\im\kzxz{\tau}{0}{0}{\overline{\zeta}}\right)^{\overline{s}}(\tau-\overline{\zeta})^{-l}|\tau-\overline{\zeta}|^{-2\overline{s}}\frake_\lambda\mid_{l,L}\gamma\rangle_1\im(\tau)^ld\mu(\tau)\right)\frake_\lambda, 
  \end{equation}
  which can be seen as non-holomorphic version of the kernel operator $K_1$  defined in \cite{RS}, Definition 5.3. To prove the stated assertion, we may use the same unfolding trick as laid out in the proof of Theorem 5.6 in \cite{RS}. In doing so, we obtain
  \begin{equation}\label{eq:reproducing_integral_vec_val}
    \begin{split}
     &     (-1)^{-s}\im(\zeta)^s \int_{\Gamma_1\bs \H}\langle f(\tau),\sum_{\gamma\in \Gamma_1}\varphi_{l,\overline{s}}(j(\gamma,\tau)(\tau-\overline{\zeta}))\rho_{L,1}^{-1}(\gamma)\frake_\lambda\rangle_1\im(\tau)^{l+s}d\mu(\tau) \\
      & = (-1)^{-s}\im(\zeta)^s\int_{\H}\langle f(\tau),(\tau-\overline{\zeta})^{-l}|\tau-\overline{\zeta}|^{-2\overline{s}}\frake_\lambda\rangle_1\im(\tau)^{l+s}d\mu(\tau). 
      \end{split}
  \end{equation}
 Since each component of $f$ is a cusp form for $\Gamma(N)$, the latter integral in \eqref{eq:reproducing_integral_vec_val} can be evaluated explicitly by means of  the proposition in \cite{Bo1}, Section 2.2:
    \begin{equation}\label{eq:reproducing_integral_1}
    (-1)^{-s}\im(\zeta)^s \int_{\H}f_\lambda(\tau)(\overline{\tau}-\zeta)^{-l}|\tau-\overline{\zeta}|^{-2s}\im(\tau)^{l+s}d\mu(\tau) = (-1)^{-s}C(l,s)f_\lambda(\zeta)
    \end{equation}
    for any $\zeta\in \H$. 

    $ii)$:  Similar to $i)$ we have
    \begin{equation}\label{eq:scalar_product_poincare_2}
      \begin{split}
        &        \langle f(\tau)\otimes \frake_\lambda, \mathscr{P}^+_l(\tau,\overline{\zeta}, D, \overline{s})\rangle_{2} =  \sum_{M\in \Gamma_1\bs \Gamma_1D\Gamma_1}\sum_{\mu\in L'/L}\sum_{\gamma\in \Gamma_1} \times\\
     &   \langle f(\tau),\det(\im\kzxz{\tau}{0}{0}{\overline{\zeta}})^{\overline{s}}\varphi_{l,\overline{s}}(j(\gamma,\tau)j(M,\overline{\zeta})(\gamma\tau-M\overline{\zeta}))\rho_{L,1}^{-1}(\gamma)\frake_\mu\rangle_1\langle \frake_\lambda,\rho_{L,1}^{*-1}(M)\frake_\mu\rangle_1.
      \end{split}
    \end{equation}
    Thus, as before,
    \begin{equation}\label{eq:petersson_scalar_product_poincare_2}
      \begin{split}
  &  \int_{\Gamma_1\bs \H}\langle f(\tau)\otimes\frake_\lambda,\mathscr{P}^+_l(\tau,\overline{\zeta},D,\overline{s})\rangle_2\im(\tau)^ld\mu(\tau) \\
      &= (-1)^{-s}\sum_{M\in \Gamma_1(D)\bs\Gamma_1}\im(M\zeta)^sj(M,\zeta)^{-l}\sum_{\mu\in L'/L}\langle\frake_\lambda,\rho_{L,1}^{*-1}(M)\frake_\mu\rangle_1\\
        & \times \int_{\Gamma_1\bs \H}\langle  f(\tau),\sum_{\gamma\in \Gamma_1}\varphi_{l,\overline{s}}(j(\gamma,\tau)(\gamma\tau-M\overline{\zeta}))\rho_{L,1}^{-1}(\gamma)\frake_\mu\rangle_1\im(\tau)^{l+s}d\mu(\tau).
      \end{split}
    \end{equation}
    Unfolding the latter integral and applying \eqref{eq:reproducing_integral_1} subsequently, yields
    \begin{equation}\label{eq:reproducing_hecke_op_id}
      \begin{split}
  &\sum_{\lambda\in L'/L}\left(\int_{\Gamma_1\bs \H}\langle f(\tau)\otimes \frake_\lambda,\mathscr{P}^+_l(\tau,\overline{\zeta},D,\overline{s})\rangle_2\im(\tau)^ld\mu(\tau)\right)\frake_\lambda    \\
      & =(-1)^{-s}C(l,s)\sum_{M\in \Gamma_1\bs\Gamma_1D\Gamma_1}\sum_{\mu\in L'/L}j(M,\zeta)^{-l}f_\mu(M\zeta)\sum_{\lambda\in L'/L}\langle\frake_\lambda,\rho^{*-1}_{L,1}(M)\frake_\mu\rangle_1\frake_\lambda. \\
        \end{split}
    \end{equation}
  The right-hand side of \eqref{eq:reproducing_hecke_op_id}  is nothing else than $(-1)^{-s}C(l,s) (f\mid_{l,L}T(D))(\zeta)$. 

\end{proof}

\section{Garrett-B\"ocherer decomposition of vector-valued Siegel Eisenstein series}
In this section we present a decomposition of the Siegel Eisenstein series $E^2_{l,0}$ in terms of $E^1_{l,0}$ and the Poincar\'e series $\mathscr{P}^+_l$. Such a decomposition  was developed by Garrett (\cite{Ga}) and B\"ocherer (\cite{Bo1}) for scalar valued holomorphic and non-holomorphic Siegel Eisenstein series, respectively. It is based on an explicit system of representatives of $\Gamma_{n+m,0}\bs \Gamma_{n+m}/l_{n+m}(\Gamma_m\times \Gamma_n)$ determined by Garrett. Since we are dealing with the same groups for $n=m=1$, we may use Garrett's results. They become considerably easier in this special case, so we  summarize them  here in the following two theorems. Subsequently, we will use  these theorems to state and prove our vector valued version of the Garrett-B\"ocherer decomposition. 

\begin{theorem}\label{thm:double_coset}
  A complete set of representatives for the double coset $\Gamma_{2,0}\bs \Gamma_2/l_{1,1}(\Gamma_1\times \Gamma_1)$ is given by
  \[
  \bigcup_{r=0}^1\mathscr{M}_r,
  \]
  where
  \begin{equation}\label{eq:double_coset_representative}
    \mathscr{M}_0 = \{U_2(0_2)\}\text{ and } \mathscr{M}_1 = \left\{U_2\kzxz{0}{d}{d}{0}\; |\; d\in \N\right\}.  
    \end{equation}
\end{theorem}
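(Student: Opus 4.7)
The theorem is a special case of Garrett's classification of the double cosets $\Gamma_{n+m,0}\bs\Gamma_{n+m}/l_{m,n}(\Gamma_m\times\Gamma_n)$ in \cite{Ga}: in the general setup these double cosets are parametrized by an integer $r\in\{0,\dots,\min(m,n)\}$ together with certain additional data depending on $r$. Setting $m=n=1$ collapses the parameter range to $r\in\{0,1\}$, which matches the partition $\mathscr{M}_0\cup\mathscr{M}_1$ in the statement. My plan is to make this specialization explicit by a direct matrix reduction, which is considerably more transparent here than in the general case.

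Writing $\gamma=\kzxz{A}{B}{C}{D}\in\Gamma_2$ in $2\times 2$ blocks, I would first analyze how $\Gamma_{2,0}$ and $l_{1,1}(\Gamma_1\times\Gamma_1)$ act on the bottom row $(C,D)$: left multiplication by $\Gamma_{2,0}$ sends $(C,D)\mapsto(PC,PD)$ for $P\in \GL_2(\Z)$ (the $B$-shift inside $\Gamma_{2,0}$ acts trivially on $(C,D)$), while right multiplication by $l_{1,1}(\Gamma_1\times\Gamma_1)$ mixes the first, respectively the second, columns of $(C,D)$ separately via the two copies of $\SL_2(\Z)$. The basic invariant of a double coset is then the rank of the off-diagonal entry $c_{12}$ of $C$ (equivalently of $c_{21}$, by the symplectic identity $CD^t=DC^t$), which takes only the values $0$ or $1$; this accounts exactly for the two families. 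For $r=0$, one uses a $P\in \GL_2(\Z)\subset\Gamma_{2,0}$ to clear the off-diagonal entries of $C$, and the symplectic relations then force $\gamma\in l_{1,1}(\Gamma_1\times\Gamma_1)\cdot\Gamma_{2,0}$, so $\gamma$ is equivalent to $U_2(0_2)$. For $r=1$, Smith normal form reduces $c_{12}$ to a positive integer $d$, and the remaining symplectic constraints together with the $l_{1,1}$-right action pin down the other blocks, yielding the representative $U_2\kzxz{0}{d}{d}{0}$.

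To complete the argument I would check that distinct $d\in\N$ give distinct double cosets inside $\mathscr{M}_1$: this is immediate once $d=|c_{12}|$ is identified as a genuine double-coset invariant, insensitive to both actions. The main technical nuisance I anticipate is the book-keeping of the right action of $l_{1,1}(\Gamma_1\times\Gamma_1)$, since it couples the columns of $C$ with those of $D$ through the lower rows of the embedded matrices; one must make sure that the normalization of $C$ can be carried out without spoiling the simultaneous reductions on $D$. A Bruhat-type decomposition of $\Gamma_2$ along the Siegel parabolic $\Gamma_{2,0}$ is the natural tool to push this through cleanly, and it reduces the verification to the case-by-case matrix calculations sketched above.
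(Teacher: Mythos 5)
The paper itself gives no proof of this statement --- it simply specializes Garrett's classification from \cite{Ga} to $n=m=1$, exactly as in your opening paragraph. The issue is with the direct argument you then commit to: the quantity you designate as the fundamental invariant is not in fact a double-coset invariant, and this breaks both the separation of $\mathscr{M}_0$ from $\mathscr{M}_1$ and the distinctness of the cosets for different $d$. Concretely, left multiplication by $\kzxz{a}{b}{0_2}{(a^{-1})^t}\in\Gamma_{2,0}$ replaces $C$ by $PC$ with $P=(a^{-1})^t$ an arbitrary element of $\GL_2(\Z)$. Taking $\gamma=U_2\kzxz{0}{1}{1}{0}$, so $C=\kzxz{0}{1}{1}{0}$ and $c_{12}=1$, and $P=\kzxz{2}{1}{1}{1}$, one gets the new lower-left block $\kzxz{1}{2}{1}{1}$ with $c_{12}=2$, while the double coset is of course unchanged (still $d=1$). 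So $|c_{12}|$ is not ``insensitive to both actions'' and cannot be used to show that distinct $d$ give distinct cosets. Likewise $S^{\downarrow}=l_{1,1}(1,S)$ lies in the trivial double coset and has $c_{12}=0$, but after left multiplication by the element with $P=\kzxz{1}{1}{0}{1}$ its lower-left block becomes $\kzxz{0}{1}{0}{1}$ with $c_{12}\neq 0$; so vanishing of $c_{12}$ does not even detect the cell $r=0$. The parenthetical appeal to $CD^t=DC^t$ does not repair this, since that symmetry relates $c_{12}$ and $c_{21}$ only through the entries of $D$.

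The repair is to replace the single matrix entry by a genuinely equivariant quantity. Let $C_1,D_1$ denote the first columns of $C$ and $D$. Under the left $\Gamma_{2,0}$-action both columns are multiplied by the same $P\in\GL_2(\Z)$, and under the right action of $l_{1,1}(\gamma_1,\gamma_2)$ the pair transforms as $(C_1,D_1)\mapsto(aC_1+cD_1,\,bC_1+dD_1)$ with $\kzxz{a}{b}{c}{d}=\gamma_1\in\SL_2(\Z)$; hence $\delta(\gamma):=|\det(C_1\,|\,D_1)|$ is a double-coset invariant (it is the index in $\Z^2$ of the projection onto the first symplectic plane of the Lagrangian lattice spanned by the rows of $(C\,|\,D)$). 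One computes $\delta=0$ on $U_2(0_2)$ and $\delta=d$ on $U_2\kzxz{0}{d}{d}{0}$, which separates all the listed representatives and identifies the two families. With this invariant in place, your reduction scheme (clearing blocks with $P\in\GL_2(\Z)$, Smith normal form to normalize the elementary divisor to $d$, and the symplectic relations to pin down the remaining blocks) does go through and yields an honest elementary proof of the specialization that the paper only quotes from \cite{Ga}.
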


Let $g_d\in \mathscr{M}_i,\; d\in \N_0$. Tailored to the case $n=m=1$ the theorem in \cite{Ga}, $\S$3, specifies a complete set of left coset representatives of $\Gamma_{2,0}\backslash \Gamma_{2,0}\; g_d\; l_{1,1}(\Gamma_1\times \Gamma_1).$

\begin{theorem}\label{thm:left_coset}
  Let $d\in \N_0,\; r\in \{0,1\}$ and $g_d\in \mathscr{M}_r$.  Then a complete set of representatives of the $\Gamma_{2,0}$-left cosets in  $\Gamma_{2,0}\; g_d\;l_{1,1}(\Gamma_1\times \Gamma_1)$ is given by
  \begin{equation}\label{eq:system_left_cosets}
    \left\{g_d\;(\gamma g)^{\uparrow}(M h)^{\downarrow}\;|\; \gamma\in \Gamma_r,\; g\in \Gamma_{1,r}\bs\Gamma_1\;,M\in \Gamma_r\kzxz{0}{d^{-1}}{d}{0}\bs \Gamma_r,\; h\in \Gamma_{1,r}\bs \Gamma_1\right\}.
    \end{equation}
\end{theorem}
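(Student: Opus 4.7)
The plan is to apply the orbit--stabilizer bijection
\[
\Gamma_{2,0}\backslash\Gamma_{2,0}\,g_d\,l_{1,1}(\Gamma_1\times\Gamma_1)\;\longleftrightarrow\;H_d\backslash l_{1,1}(\Gamma_1\times\Gamma_1),\quad \Gamma_{2,0}g_dk\longleftrightarrow H_dk,
\]
with $H_d:=g_d^{-1}\Gamma_{2,0}g_d\cap l_{1,1}(\Gamma_1\times\Gamma_1)$, to identify $l_{1,1}(\Gamma_1\times\Gamma_1)$ with $\Gamma_1\times\Gamma_1$ through $(g,h)\mapsto g^{\uparrow}h^{\downarrow}$, to compute $H_d$ explicitly for each representative $g_d$ obtained from Theorem \ref{thm:double_coset}, and finally to verify that the set \eqref{eq:system_left_cosets} is a transversal of $H_d$ acting on $\Gamma_1\times\Gamma_1$.

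The case $r=0$ is essentially immediate. For $g_0=U_2(0_2)=1_4$ one has $H_0=\Gamma_{2,0}\cap l_{1,1}(\Gamma_1\times\Gamma_1)$. Since $\gamma^{\uparrow}$ (respectively $\gamma^{\downarrow}$) has vanishing lower-left $\Gamma_2$-block if and only if $\gamma\in\Gamma_{1,0}$, this intersection equals $l_{1,1}(\Gamma_{1,0}\times\Gamma_{1,0})$. The orbits of $H_0$ on $\Gamma_1\times\Gamma_1$ are thus labelled by $(g,h)\in(\Gamma_{1,0}\backslash\Gamma_1)\times(\Gamma_{1,0}\backslash\Gamma_1)$, reproducing \eqref{eq:system_left_cosets} for $r=0$, where the $\gamma$- and $M$-parameters collapse because $\Gamma_0$ is trivial.

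The substantive case is $r=1$, where $g_d=U_2(E)$ with $E=\kzxz{0}{d}{d}{0}$. Here I would perform the block computation $g_d\,g^{\uparrow}h^{\downarrow}\,g_d^{-1}$: writing $g^{\uparrow}h^{\downarrow}=\kzxz{\tilde A}{\tilde B}{\tilde C}{\tilde D}$ with $\tilde A,\tilde B,\tilde C,\tilde D$ the diagonal $2\times 2$ blocks assembled from the entries of $g$ and $h$, the $(2,1)$-block of the conjugate is $E\tilde A-E\tilde BE+\tilde C-\tilde DE$. Forcing it to vanish produces four scalar equations which I would repackage as the pair of conditions $g\in\Gamma_1(\kzxz{0}{d^{-1}}{d}{0})$ and
\[
h=\phi(g):=\kzxz{0}{d^{-1}}{d}{0}\,g\,\kzxz{0}{d^{-1}}{d}{0}.
\]
Thus $H_d$ is the graph of the involutive automorphism $\phi$ of $\Gamma_1(\kzxz{0}{d^{-1}}{d}{0})$, acting on $\Gamma_1\times\Gamma_1$ via $g_0\cdot(g,h)=(g_0g,\phi(g_0)h)$.

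Once $H_d$ has this graph form, extracting Garrett's transversal is essentially forced. Given $(g,h)\in\Gamma_1\times\Gamma_1$, let $M$ be the unique representative in a fixed transversal of $\Gamma_1(\kzxz{0}{d^{-1}}{d}{0})\backslash\Gamma_1$ with $M\in\Gamma_1(\kzxz{0}{d^{-1}}{d}{0})h$, set $g_0:=\phi^{-1}(Mh^{-1})\in\Gamma_1(\kzxz{0}{d^{-1}}{d}{0})$ and $\gamma:=g_0g\in\Gamma_1$; then $(\gamma,M)$ lies in the same $H_d$-orbit as $(g,h)$, and uniqueness follows from the bijectivity of $\phi$ on $\Gamma_1(\kzxz{0}{d^{-1}}{d}{0})$. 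This is \eqref{eq:system_left_cosets} for $r=1$, the $g$- and $h$-parameters being trivial because $\Gamma_{1,1}=\Gamma_1$. I expect the main obstacle to be the block-matrix calculation that recognizes $H_d$ as the graph of $\phi$ inside $l_{1,1}(\Gamma_1\times\Gamma_1)$; after that structural step the coset counting and reduction are routine, and in particular the transversal has the expected cardinality $|\Gamma_1|\cdot[\Gamma_1:\Gamma_1(\kzxz{0}{d^{-1}}{d}{0})]$.
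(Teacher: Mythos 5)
Your argument is correct, but note that the paper itself gives no proof of this statement: it is quoted directly from Garrett's theorem in \cite{Ga}, \S 3, specialized to $n=m=1$, so there is nothing in the text to compare against except that citation. Your stabilizer computation is a clean, self-contained verification of the special case. I checked the key step: for $g_d=U_2(E)$ with $E=\kzxz{0}{d}{d}{0}$ and $k=g^{\uparrow}h^{\downarrow}$ with $g=\kzxz{a}{b}{c}{e}$, $h=\kzxz{a'}{b'}{c'}{e'}$, the lower-left block of $g_dkg_d^{-1}$ is indeed $E\tilde A-E\tilde BE+\tilde C-\tilde DE=\kzxz{c-d^2b'}{d(a'-e)}{d(a-e')}{c'-d^2b}$, and since $d\ge 1$ its vanishing forces $a'=e$, $e'=a$, $c=d^2b'$, $c'=d^2b$, i.e.\ $h=\kzxz{0}{d^{-1}}{d}{0}g\kzxz{0}{d^{-1}}{d}{0}=\phi(g)$; integrality of $h$ is precisely the defining condition of the subgroup the paper denotes $\Gamma_1\kzxz{0}{d^{-1}}{d}{0}$, so $H_d$ is the graph of the involution $\phi$ on that subgroup, and your transversal argument (choose $M$ in the fixed set of representatives of $\Gamma_1\kzxz{0}{d^{-1}}{d}{0}\bs\Gamma_1$ lying in $\Gamma_1\kzxz{0}{d^{-1}}{d}{0}h$, then solve uniquely for $g_0$ via bijectivity of $\phi$) correctly identifies the orbit space with $\Gamma_1\times\bigl(\Gamma_1\kzxz{0}{d^{-1}}{d}{0}\bs\Gamma_1\bigr)$, matching \eqref{eq:system_left_cosets} for $r=1$; the $r=0$ case is as trivial as you say. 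What your route buys over the citation is an elementary, genus-$2$-specific proof that avoids Garrett's general rank-invariant analysis of $P\bs\Sp_{n+m}/(\Sp_n\times\Sp_m)$; what it loses is generality, which is irrelevant here since only $n=m=1$ is used.
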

Note that we use the convention that $\Gamma_0= \{1_2\}$ and accordingly $\Gamma_0\kzxz{0}{d}{d^{-1}}{0}\bs \Gamma_0 = \{1_2\}$. We also have that $\Gamma_{1,1}\bs \Gamma_1 = \{1_2\}$.

Based on these theorems a vector valued variant of the  pullback formula  in  \cite{Ga}, $\S 5$, and \cite{Bo1}, involving the  Siegel Eisenstein series \eqref{eq:eisenstein_nonholom}, can be given.

\begin{theorem}\label{thm:pullback}
  Let $d\in \Z$ be a positive integer, $D=\kzxz{d}{0}{0}{d^{-1}}$ and $E_{k,0}^{2*}$ be defined as in Definition \ref{def:eisenstein_nonholom}.  Then for all $\tau, \zeta\in \H$
  \begin{equation}\label{eq:pullback}
    \begin{split}
      &  E_{l,0}^{2*}(\kzxz{\tau}{0}{0}{\zeta},s) = \\
&      E_{l,0}^{1*}(\tau,s)\otimes E_{l,0}^{1*}(\zeta,s) + \frac{e(\sig(L)/8)}{|L'/L|^{1/2}}\sum_{d\ge 1}\frac{g_d(L)}{g(L)}d^{-l-2s}\mathscr{P}^+_l(-\tau,\zeta,D,s).
    \end{split}
    \end{equation}  
\end{theorem}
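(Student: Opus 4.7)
My plan is to emulate the Garrett--B\"ocherer approach: I split the sum defining $E^{2*}_{l,0}$ according to the double coset decomposition of $\Gamma_{2,0}\bs\Gamma_2/l_{1,1}(\Gamma_1\times\Gamma_1)$ given by Theorem \ref{thm:double_coset}, and refine each piece by the left-coset parametrisation of Theorem \ref{thm:left_coset}. This rewrites $E^{2*}_{l,0}(\kzxz{\tau}{0}{0}{\zeta},s)$ as a sum indexed by $r\in\{0,1\}$, by representatives $g_d\in\mathscr{M}_r$, and by inner parameters $\gamma\in\Gamma_r$ and $M\in\Gamma_r\kzxz{0}{d^{-1}}{d}{0}\bs\Gamma_r$ (with the usual convention that an empty range yields a single term). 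The $r=0$ piece should produce the tensor-product Eisenstein series, and the $r=1$ pieces (one for each $d\ge 1$) the Poincar\'e-series summands.

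For $r=0$ the only representative is $U_2(0_2)=1_4$ and the parameters reduce to $g,h\in\Gamma_{1,0}\bs\Gamma_1$. Since $l_{1,1}$ is a group homomorphism, $j_2(l_{1,1}(g,h),\kzxz{\tau}{0}{0}{\zeta})=j_1(g,\tau)j_1(h,\zeta)$, and Lemma \ref{lem:embedding_weil} gives $\rho^{*-1}_{L,2}(g^{\uparrow}h^{\downarrow})\frake_{(0,0)}=\rho^{*-1}_{L,1}(g)\frake_0\otimes\rho^{*-1}_{L,1}(h)\frake_0$. Combined with the factorisation $\det(\im\kzxz{\tau}{0}{0}{\zeta})^s=\im(\tau)^s\im(\zeta)^s$, the double sum collapses exactly to $E^{1*}_{l,0}(\tau,s)\otimes E^{1*}_{l,0}(\zeta,s)$.

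For each $r=1$ representative $g_d=U_2\kzxz{0}{d}{d}{0}$, the cocycle yields
\[
j_2\bigl(g_d\gamma^{\uparrow}M^{\downarrow},\kzxz{\tau}{0}{0}{\zeta}\bigr)=\bigl(1-d^2(\gamma\tau)(M\zeta)\bigr)\,j_1(\gamma,\tau)\,j_1(M,\zeta),
\]
while Lemma \ref{lem:weil_gm} (transferred to $\rho^*_{L,2}$ via Remark \ref{rem:weil_sign}(iii)) and Lemma \ref{lem:embedding_weil} give
\[
\rho^{*-1}_{L,2}(g_d\gamma^{\uparrow}M^{\downarrow})\frake_{(0,0)}=\tfrac{1}{|L'/L|}\sum_{\mu,\nu\in L'/L}e((\mu,\nu))\,\rho^{*-1}_{L,1}(\gamma)\frake_{d\mu}\otimes\rho^{*-1}_{L,1}(M)\frake_\nu.
\]
To recognise the Poincar\'e series I would perform three coordinated substitutions. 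First, the algebraic identity
\[
1-d^2(\gamma\tau)(M\zeta)=-d^2(M\zeta)\bigl(\gamma\tau-\tfrac{1}{d^2M\zeta}\bigr),\qquad \tfrac{1}{d^2M\zeta}=\kzxz{0}{d^{-1}}{d}{0}\!\cdot\!(M\zeta),
\]
combined with the substitution $\gamma'=D\gamma$, extracts the factor $d^{-l-2s}$ via $\varphi_{l,s}$. Second, the involution $\ell$ of \eqref{eq:isomorphism_congruence_subgroup}, composed with the standard Hecke bijection, identifies $\Gamma_1\kzxz{0}{d^{-1}}{d}{0}\bs\Gamma_1$ with $\Gamma_1\bs\Gamma_1 D\Gamma_1$. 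Third, the change of variable $\gamma\mapsto\widetilde\gamma$, combined with \eqref{eq:weil_conjugate} and $\gamma(-\tau)=-\widetilde\gamma\tau$, converts the first component into the $P^+_l(-\tau,\cdot,s)$-form. Finally, rewriting $\frake_{d\mu}=\tfrac{g(L)}{g_d(L)}\rho^{-1}_{L,1}(D)\frake_\mu$ via the extension \eqref{eq:extended_weil_D_prime}, and using Milgram's formula $g(L)=\sqrt{|L'/L|}\,e(\sig(L)/8)$, combines the $\tfrac{1}{|L'/L|}$ prefactor with the Gauss-sum ratio into the global constant $\tfrac{e(\sig(L)/8)}{|L'/L|^{1/2}}\tfrac{g_d(L)}{g(L)}$.

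The main obstacle is weaving these three substitutions together consistently: the re-indexing of $M$ via $\ell$ must cancel precisely the stray cocycle factor left over after extracting $d^{-l-2s}$ and after reflecting $\tau$ into $\H^{-}$, so that what remains is exactly the summand of $\mathscr{P}^+_l(-\tau,\zeta,D,s)$. Careful bookkeeping of the sign conventions arising from $\varphi_{l,s}(-w)=\varphi_{l,s}(w)$ (valid since $l$ is even) and from the branch choice $\im(-\tau)^s=(-1)^s\im(\tau)^s$ built into \eqref{eq:eisenstein_series_negative_argument} is essential for matching the $(-1)^s$ factor that $\mathscr{P}^+_l(-\tau,\zeta,D,s)$ inherits from $P^+_l(-\tau,\zeta,s)$, and for verifying the Gauss-sum identification in the form stated in the theorem.
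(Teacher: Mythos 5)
Your overall strategy is exactly the paper's: decompose $E^{2*}_{l,0}$ via Theorems \ref{thm:double_coset} and \ref{thm:left_coset}, identify the $r=0$ cell with $E^{1*}_{l,0}\otimes E^{1*}_{l,0}$ (your treatment of that piece is correct and matches the paper), and massage the $r=1$ cells into $\mathscr{P}^+_l$ using Lemmas \ref{lem:embedding_weil} and \ref{lem:weil_gm}, the involution $\ell$, a reflection of $\tau$, and \eqref{eq:extended_weil_D_prime}. However, there is one concrete gap in your $r=1$ analysis: none of the three substitutions you list can turn the off-diagonal element $\sum_{\mu,\nu}e(\pm(\mu,\nu))\,\frake_{d\mu}\otimes\frake_\nu$ of $\C[L'/L]\otimes\C[L'/L]$ into the diagonal element $\sum_{\varrho}\frake_\varrho\otimes\frake_{d\varrho}$ that $\mathscr{P}^+_l$ is built from (recall $P^+_l$ is a sum over $\lambda$ of slashed copies of $\frake_\lambda\otimes\frake_\lambda$). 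Rewriting $\frake_{d\mu}$ via \eqref{eq:extended_weil_D_prime} and invoking Milgram's formula only rescales; it does not remove the phase $e(\pm(\mu,\nu))$ coupling the two tensor slots, so the expression you arrive at is not a scalar multiple of the required diagonal sum.

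The missing ingredient is that the reflection must be $\gamma\mapsto S^{-1}\widetilde{\gamma}$, not merely $\gamma\mapsto\widetilde{\gamma}$. The extra $S$ does double duty. On the automorphy-factor side it converts $1-d^2(\gamma\tau)(M\zeta)$ into $d\bigl(\gamma(-\tau)-(DM)\zeta\bigr)\,j(\gamma,-\tau)j(DM,\zeta)/\bigl(j(\gamma,\tau)j(M,\zeta)\bigr)^{-1}$-type expressions while keeping everything inside $\calH\times\H$; your alternative of writing $1-d^2(\gamma\tau)(M\zeta)=-d^2(M\zeta)\bigl(\gamma\tau-\tfrac{1}{d^2M\zeta}\bigr)$ uses the matrix $\kzxz{0}{d^{-1}}{d}{0}$ of determinant $-1$, which sends $M\zeta$ into $\H^-$ and does not produce the argument $(-\tau,\zeta)$ appearing in the theorem. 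On the Weil-representation side, the extra $\rho^{*}_{L,1}(S)$ acting on the first slot is a finite Fourier transform whose phase $e((\mu,\varrho))$ combines with $e(-(\mu,\nu))$ so that the $\mu$-sum yields $|L'/L|\,\delta_{\nu,d\varrho}$; this simultaneously collapses the double sum to the diagonal and manufactures the constant $e(\sig(L)/8)/|L'/L|^{1/2}$ (this is where Milgram's formula is implicitly hiding, via $\rho_{L,1}(S)$, rather than in the step you describe). With that substitution in place, \eqref{eq:extended_weil_D_prime} then converts $\frake_{d\varrho}$ in the second slot into $\tfrac{g_d(L)}{g(L)}\rho^{*-1}_{L,1}(D)\frake_\varrho$, which absorbs $D$ into the coset representative $R=DM\in\Gamma_1\bs\Gamma_1D\Gamma_1$ and produces the factor $\tfrac{g_d(L)}{g(L)}$ in the stated form. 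The rest of your bookkeeping (the $\ell$-reindexing, $\varphi_{l,s}(-w)=\varphi_{l,s}(w)$ for even $l$, and the branch convention for $\im(-\tau)^s$) is consistent with the paper.
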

 \begin{remark}\label{rem:convergence_subseries}
  Since $E_{l,0}^{2}$ is absolutely and uniformly convergent on compact subsets of $\H^2$, the same holds for any subseries occurring on the right-hand side of \eqref{eq:pullback}. In particular, $\sum_{d\in \N}\frac{g_d(L)}{g(L)}d^{-l-2s}\mathscr{P}^+_l(-\tau,\zeta,D,s)$ is normally convergent for $\re(s)> \frac{3-l}{2}$. 
  \end{remark}

 \begin{proof}
  The proof is an adaption of the one  given in \cite{Ga}. All steps concerning the factor of automorphy $j_2$ carry over immediately. The parts in $E_{l,0}^{2*}$ coming from the Weil representation $\rho_{L,2}^*$ have to be treated separately.
  
According to Theorem \ref{thm:double_coset} and Theorem \ref{thm:left_coset} we have
\begin{equation}\label{eq:eis_decomp}
  \begin{split}
E_{l,0}^{2*}(\kzxz{\tau}{0}{0}{\zeta},s)
  &= \sum_{g\in \Gamma_{1,0}\bs \Gamma_1}\sum_{h\in \Gamma_{1,0}\bs \Gamma_1}\det\left(\im\kzxz{\tau}{0}{0}{\zeta}\right)^s\frake_{0}\mid^{2*}_{l, L} g^{\uparrow}h^{\downarrow} \\
    &+  \sum_{d\in\N}\sum_{M\in \Gamma_1(d)\bs \Gamma_1}\sum_{\gamma\in \Gamma_1}\det\left(\im\kzxz{\tau}{0}{0}{\zeta}\right)^s\frake_{0}\mid^{2*}_{l,L}U_2\kzxz{0}{d}{d}{0}\gamma^{\uparrow}M^{\downarrow}.
    \end{split}
\end{equation}
Note that we have replaced the left cosets over $\Gamma_1\kzxz{0}{d^{-1}}{d}{0}\bs \Gamma_1$ with $\Gamma_1(d)\bs \Gamma_1$. This amounts to replace $M$ with $\ell(M)$ (see Section \ref{sec:notation} for the notation), which only alters the order of summation but not the whole expression.
We consider both of the summands above separately.

Starting with first summand, note that a by direct computation we find  
 
\begin{equation}\label{eq:calc_cocycle_rel}
j_2(g^\uparrow h^{\downarrow},\kzxz{\tau}{0}{0}{\zeta}) 
= j(g,\tau)j(h,\zeta).
\end{equation}

For the part involving the Weil representation $\rho_{L,2}^*$ we exploit \eqref{eq:weil_embedd_1} and \eqref{eq:weil_embedd_2} and infer that
\begin{equation}\label{eq:calc_weil_repr}
  \begin{split}
  \rho_{L,2}^{*-1}(g^{\uparrow}h^{\downarrow})\frake_{(0,0)} &= \rho_{L,2}^{*-1}(g^{\uparrow})(\frake_0\otimes \rho_{L,1}^{*-1}(h)\frake_0)\\
  &= \rho_{L,1}^{*-1}(g)\frake_0\otimes \rho_L^{*-1}(h)\frake_0.
  \end{split}
  \end{equation}

If we insert the right-hand side of \eqref{eq:calc_cocycle_rel} and \eqref{eq:calc_weil_repr}, we may write in terms of \eqref{def:holom_func_abbr}
\begin{align*}
  &  \sum_{g\in \Gamma_\infty\bs \Gamma_1}\sum_{h\in \Gamma_\infty\bs \Gamma_1}\det\left(\im\kzxz{\tau}{0}{0}{\zeta}\right)^s\frake_0\otimes\frake_0\mid^{2*}_{l,L}g^{\uparrow}h^{\downarrow}\\
  & =\sum_{g\in \Gamma_\infty\bs \Gamma_1}\sum_{h\in \Gamma_\infty\bs \Gamma_1}\det\left(\im\kzxz{\tau}{0}{0}{\zeta}\right)^s\varphi_{l,s}(j(g,\tau)j(h,\zeta))\rho_{L,1}^{*-1}(g)\frake_0\otimes \rho_L^{*-1}(h)\frake_0\\
  &=E_{l,0}^{1*}(\tau,s)\otimes E_{l,0}^{1*}(\zeta,s). 
\end{align*}

For the second summand of \eqref{eq:eis_decomp}, a straightforward calculation, using the cocycle relation of $j_2$ twice, yields
\begin{align*}
&j_2\left((U_2\kzxz{0}{d}{d}{0}\gamma^{\uparrow}M^{\downarrow},\kzxz{\tau}{0}{0}{\zeta}\right) 
  = \left(1-(\gamma\tau)\cdot d^2(M\zeta)\right)j(\gamma,\tau)j(M,\zeta).
\end{align*}

On the other hand, by means of Lemma \ref{lem:embedding_weil} and Lemma \ref{lem:weil_gm} we have
\begin{equation}\label{eq:expr_weil}
  \begin{split}
    \rho_{L,2}^{*-1}(U_2\kzxz{0}{d}{d}{0}\gamma^{\uparrow}M^{\downarrow})\frake_0\otimes\frake_0 &= \frac{1}{|L'/L|}\sum_{\mu,\nu\in L'/L}e(-(\mu,\nu))\rho_{L,2}^{*-1}(M^{\downarrow})\rho_{L,2}^{*-1}(\gamma^{\uparrow})\frake_{d\mu}\otimes\frake_\nu
    \\
&=  \frac{1}{|L'/L|}\sum_{\mu,\nu\in L'/L}e(-(\mu,\nu))\rho_{L,2}^{*-1}(M^{\downarrow})(\rho_{L,1}^{*-1}(\gamma)\frake_{d\mu}\otimes \frake_\nu)\\
&=\frac{1}{|L'/L|}\sum_{\mu,\nu\in L'/L}e(-(\mu,\nu))\rho_{L,1}^{*-1}(\gamma)\frake_{d\mu}\otimes \rho_{L,1}^{*-1}(M)\frake_\nu.
\end{split}
  \end{equation}
The transformation $\gamma\mapsto S^{-1}\widetilde{\gamma}$ (see \eqref{eq:conjuation} for the definition of $\widetilde{\gamma}$) leaves the sum over $\gamma\in \Gamma_1$ and therefore the latter summand of \eqref{eq:eis_decomp} invariant.
The subsequent calculations can be easily verified
\begin{align*}
   j_2\left((U_2\kzxz{0}{d}{d}{0}(S\widetilde{\gamma})^{\uparrow}M^{\downarrow},\kzxz{\tau}{0}{0}{\zeta}\right) & =\left(1-(S^{-1}\widetilde{\gamma}\tau)\cdot d^2(M\zeta)\right)j(S^{-1}\gamma,\tau)j(M,\zeta)\\
  &= (\gamma(-\tau) - d^2M\zeta)j(\gamma,-\tau)j(M,\zeta)\\
  &=d(\gamma(-\tau) - (\kzxz{d}{0}{0}{d^{-1}}M)\zeta)j(\gamma,-\tau) j(\kzxz{d}{0}{0}{d^{-1}}M,\zeta).
\end{align*}
Taking \eqref{eq:expr_weil} and \eqref{eq:weil_finite_repr} into account, the corresponding calculations on the level of the Weil representation  yield
\begin{align*}
  &  \rho_{L,2}^{*-1}(U_2\kzxz{0}{d}{d}{0}\gamma^{\uparrow}M^{\downarrow})\frake_0\otimes\frake_0 \\
  &= \frac{1}{|L'/L|}\sum_{\mu,\nu\in L'/L}e(-(\mu,\nu))\rho_{L,1}^{*-1}(\widetilde{\gamma})\rho_{L,1}^*(S)\frake_{d\mu}\otimes \rho^{*-1}_{L,1}(M)\frake_\nu \\
  & =\frac{e(\sig(L)/8)}{|L'/L|^{3/2}}\sum_{\varrho,\nu\in L'/L}\sum_{\mu\in L'/L}e((\mu,d\varrho-\nu))\rho^{-1}_{L,1}(\gamma)\frake_\varrho\otimes\rho_{L,1}^{*-1}(M)\frake_\nu\\
    &=\frac{e(\sig(L)/8)}{|L'/L|^{1/2}}\sum_{\varrho\in L'/L}\rho^{-1}_{L,1}(\gamma)\frake_\varrho \otimes \rho_{L,1}^{*-1}(M)\frake_{d\varrho}.
\end{align*}
For the second equation we have additionally used \eqref{eq:weil_conjugate}.

With \eqref{eq:extended_weil_D_prime} in mind we may then rewrite the last expression in the form
\begin{equation}\label{eq:eis_decomp_weil}
  \frac{e(\sig(L)/8)}{|L'/L|^{1/2}}\frac{g_d(L)}{g(L)}\sum_{\varrho\in L'/L}\rho^{-1}_{L,1}(\gamma)\frake_\varrho \otimes \rho_{L,1}^{*-1}(\kzxz{d}{0}{0}{d^{-1}}M)\frake_{\varrho}.
\end{equation}

Putting together the rearrangements above, we obtain
\begin{align*}
  & \sum_{d\in\N}\sum_{M\in \Gamma_1(d)\backslash \Gamma_1}\sum_{\gamma\in \Gamma_1}\det\left(\im\kzxz{\tau}{0}{0}{\zeta}\right)^s\frake_0\mid^2_{l,L}U_2\kzxz{0}{d}{d}{0}\gamma^{\uparrow}M^{\downarrow} = \\
  &\sum_{d\in\N}\sum_{R\in \Gamma_1\bs \Gamma_1\kzxz{d}{0}{0}{d^{-1}}\Gamma_1}\sum_{\gamma\in \Gamma_1}\det\left(\im\kzxz{\tau}{0}{0}{\zeta}\right)^sd^{-l-2s}\varphi_{l,s}(j(\gamma,-\tau)j(R,\zeta)(\gamma(-\tau) - R\zeta))\times \\
  & \frac{e(\sig(L)/8)}{|L'/L|^{1/2}}\frac{g_d(L)}{g(L)}\sum_{\varrho\in L'/L}\rho^{-1}_{L,1}(\gamma)\frake_\varrho \otimes \rho_{L,1}^{*-1}(R)\frake_{\varrho}.
\end{align*}

 \end{proof}

\section{Standard zeta function of an eigenform}

In \cite{BS}, p. 251, it was proposed to associate a zeta function
\[
\sum_{\stackrel{d\in \N}{(d,N)=1}}\lambda_d(f)d^{-s}
\]
to a common eigenform $f$ of all Hecke operators $T\kzxz{d^2}{0}{0}{1},\; (d,N)=1$, where
\[
f\mid_{l,L}T\kzxz{d^2}{0}{0}{1} = \lambda_d(f)f. 
\]
In this chapter we study the analytic properties of a slightly different zeta function. Attached to an  Eigenform $f$ of {\it all} Hecke operators $T\kzxz{d^2}{0}{0}{1}$, we define
\begin{equation}\label{def:standard_zeta_f}
  Z(s,f)=\sum_{d\in \N}\lambda_d(f)d^{-s}. 
  \end{equation}
In consistency with \cite{Bo1} the zeta function $Z(s,f)$ can be viewed as the standard  zeta function of  $f$.

\begin{remark}\label{rem:multiplicity_one}
  \begin{enumerate}
  \item[i)]
    As was already stated in \cite{BS}, $Z(s,f)$ converges for $\re(s)$ sufficiently large. We obtain this result as a by-product of the subsequent studies of the analytic properties of $Z(s,f)$.
  \item[ii)]
    Under certain assumptions on the discriminant form one can prove the existence of a common eigenform of all Hecke operators $T\kzxz{d^2}{0}{0}{1},\; d\in \N$. This is possible if a multiplicity one theorem for $S_l(\rho_{L,1})$ holds. In \cite{We}, Theorem 41, conditions for the validity of such a theorem are stated. These conditions depend heavily on the decomposition of the Weil representation $\rho_{L,1}$ into irreducible subrepresentations. The multiplicities of these subrepresentations encode the dimension of the common eigenspace for a set of eigenvalues $\lambda_d$ for all Hecke operators $T\kzxz{d^2}{0}{0}{1},\; (d,N)=1$. If all occurring irreducible subrepresentations have multiplicity one, the same holds for the before mentioned dimension of common eigenspaces.
    The decomposition of $\rho_{L,1}$ into irreducible subrepresentations is well known, see e.g. \cite{No}, \cite{NW}. Among other things, it depends on the structure of the discriminant form $L'/L$. If for example each $p$-group of $L'/L$ consists of a single Jordan block of the form $(\Z/p^\lambda\Z, \frac{rx^2}{p^\lambda})$, $\rho_{L,1}$ decomposes into irreducible subrepresentations of multiplicity one. 
    \end{enumerate}
  \end{remark}

\subsection{Analytic properties of $Z(s,f)$}

In this section we will prove that $Z(s,f)$ can be continued meromorphically to the whole $s$-plane. Also, we are able to establish a functional equation of $Z(s,f)$ under the assumption that the discriminant form is anisotropic. The proof is adaptation of the corresponding result in \cite{Bo1} to the vector valued setting. A first step is 

\begin{theorem}\label{thm:orthogonality}
  Let $f\in  S_l(\rho_{L,1})$ be a cusp form with Fourier expansion
\[
f(\tau) = \sum_{\mu\in L'/L}\sum_{\substack{n\in \Z+q(\mu)\\n>0}}a(\mu,n)e(n\tau).
\]
Then for $\frac{l}{2} + \re(s)>1$ we have
  \begin{equation}\label{eq:orthogonality}
    \sum_{\lambda\in L'/L}\left(\int_{\Gamma_1\bs \H}\langle f(\tau)\otimes \frake_\lambda,E_{l,0}^{1*}(-\tau,\overline{s})\otimes E_{l,0}^{1*}(\overline{\zeta},\overline{s})\rangle_{2} \im(\tau)^{l}d\mu(\tau)\right)\frake_\lambda = 0.
  \end{equation}
  Here $E_{l,0}^{1*}(-\tau,\overline{s})$ and $E_{l,0}^{1*}(\overline{\zeta},\overline{s})$ are defined by
  \eqref{eq:eisenstein_series_conjugate_argument}.
  \end{theorem}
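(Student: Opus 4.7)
The plan is to exploit the tensor structure of $\C[L'/L]\otimes\C[L'/L]$ in order to reduce the vanishing to the classical orthogonality of a cusp form against a vector valued Eisenstein series of the same type.

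By the compatibility \eqref{eq:scalar_product_rel} of the two scalar products, the integrand factors as
\[
  \langle f(\tau)\otimes \frake_\lambda,\,E_{l,0}^{1*}(-\tau,\overline{s})\otimes E_{l,0}^{1*}(\overline{\zeta},\overline{s})\rangle_2 = \langle f(\tau),E_{l,0}^{1*}(-\tau,\overline{s})\rangle_1\cdot \langle \frake_\lambda,E_{l,0}^{1*}(\overline{\zeta},\overline{s})\rangle_1.
\]
The second factor is independent of $\tau$, so every $\lambda$-indexed integral in \eqref{eq:orthogonality} is the product of $\langle\frake_\lambda,E_{l,0}^{1*}(\overline{\zeta},\overline{s})\rangle_1$ with the single scalar
\[
  I(s):=\int_{\Gamma_1\bs\H}\langle f(\tau),E_{l,0}^{1*}(-\tau,\overline{s})\rangle_1\im(\tau)^l\,d\mu(\tau),
\]
which is independent of $\lambda$ and $\zeta$. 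Hence the sum in \eqref{eq:orthogonality} equals $I(s)\cdot\sum_{\lambda}\langle\frake_\lambda,E_{l,0}^{1*}(\overline{\zeta},\overline{s})\rangle_1\frake_\lambda$, and it is enough to prove $I(s)=0$.

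Invoking the identity $E_{l,0}^{1*}(-\tau,\overline{s})=(-1)^{\overline{s}}E_{l,0}^1(\tau,\overline{s})$ from \eqref{eq:eisenstein_series_conjugate_argument} together with the sesquilinearity of $\langle\cdot,\cdot\rangle_1$, the quantity $I(s)$ becomes $(-1)^{-s}$ times the ordinary Petersson pairing of the cusp form $f$ against the Eisenstein series $E_{l,0}^1(\cdot,\overline{s})\in M_l(\rho_{L,1})$. The main step is then the standard unfolding argument. Writing
\[
  E_{l,0}^1(\tau,\overline{s}) = \sum_{\gamma\in\Gamma_{1,0}\bs\Gamma_1}(\im(\tau)^{\overline{s}}\frake_0)\mid_{l,L}\gamma
\]
and using the unitarity of $\rho_{L,1}$ (which makes $\langle f(\tau),g(\tau)\rangle_1\im(\tau)^l\,d\mu(\tau)$ $\Gamma_1$-invariant for $g\in M_l(\rho_{L,1})$), one checks that $\sum_\gamma$ collapses against the fundamental domain, yielding
\[
  I(s) = (-1)^{-s}\int_{\Gamma_{1,0}\bs\H}f_0(\tau)\im(\tau)^{l+s}\,d\mu(\tau),
\]
where $f_0$ denotes the $\lambda=0$ component of $f$. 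A fundamental domain for $\Gamma_{1,0}\bs\H$ may be chosen (modulo the scalar matrix $-1_2$) as the strip $\{x+iy:0\le x\le 1,\,y>0\}$, and the $x$-integration extracts the zeroth Fourier coefficient of $f_0$, which vanishes because $f$ is a cusp form.

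Absolute convergence of $E_{l,0}^1(\tau,\overline{s})$ in the range $\re(s)>1-l/2$ (cf.\ the remark after Definition \ref{def:eisenstein_nonholom}) together with the rapid decay of $f$ at $\infty$ justifies the exchange of summation and integration, which is precisely why the hypothesis $l/2+\re(s)>1$ appears. I do not anticipate a substantive obstacle: the theorem is the vector valued incarnation of the classical orthogonality between cusp forms and Eisenstein series, and the only genuine work consists in correctly tracking the twists $*$, $-\tau$, $\overline{\zeta}$ and $\overline{s}$ prescribed by \eqref{eq:eisenstein_series_conjugate_argument} and Remark \ref{rem:weil_sign}.
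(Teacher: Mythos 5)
Your proposal is correct and follows essentially the same route as the paper's proof: factor the $\langle\cdot,\cdot\rangle_2$-pairing through the tensor structure via \eqref{eq:scalar_product_rel}, pull out the $\zeta$-dependent factor, convert $E_{l,0}^{1*}(-\tau,\overline{s})$ to $E_{l,0}^{1}(\tau,\overline{s})$ by \eqref{eq:eisenstein_series_conjugate_argument}, unfold the resulting Petersson integral to the strip, and kill it by the vanishing of the constant term of $f_0$. The only cosmetic difference is that the paper applies \eqref{eq:eisenstein_series_conjugate_argument} to both tensor factors (producing its $(-1)^{2s}$ prefactor) whereas you only convert the $\tau$-factor, which is immaterial for the vanishing.
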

\begin{proof}
Similar to \eqref{eq:scalar_prod_integral_rel} and \eqref{eq:scalar_product_poincare_2}  a straightforward calculation using the relation \eqref{eq:scalar_product_rel} gives
%
   \begin{align*}
     \langle f(\tau)\otimes \frake_\lambda,E_{l,0}^{1*}(-\tau,\overline{s})\otimes E_{l,0}^{1*}(\overline{\zeta},\overline{s})\rangle_{2} &=  (-1)^{2s}\langle f(\tau)\otimes \frake_\lambda,E_{l,0}^1(\tau,\overline{s})\otimes E_{l,0}^1(-\overline{\zeta},\overline{s})\rangle_{2}
     \\&= (-1)^{2s}\langle f(\tau), E_{l,0}^1(\tau,\overline{s})\rangle_{1} \langle \frake_\lambda, E_{l,0}^1(-\overline{\zeta},\overline{s})\rangle_{1}.
     \end{align*}
   Replacing this with the integrand in \eqref{eq:orthogonality}, we find that the integral in \eqref{eq:orthogonality} is equal to 
   \begin{equation}\label{eq:petersson_integral_eisenstein}
    (-1)^{2s} \left(\sum_{\lambda \in L'/L}\langle \frake_\lambda, E_{l,0}^1(-\overline{\zeta},\overline{s})\rangle_{1}\frake_\lambda\right)\int_{\Gamma_1\bs \H}\langle f(\tau), E_{l,0}^1(\tau,\overline{s})\rangle_{1}\im(\tau)^{l}d\mu(\tau).
   \end{equation}
   The integral in \eqref{eq:petersson_integral_eisenstein} is the Petesson scalar product of $f$ and $E_{l,0}^1$.  As such, it is well defined. For its evaluation we adapt the calculations in the proof of \cite{Br1}, Proposition 1.5, to  our situation. The usual unfolding argument (see the proof of Thm. 5.6 in \cite{RS}) yields
   \begin{align*}
     \int_{\Gamma_1\bs \H}\langle f(\tau), E_{l,0}^{1}(\tau,\overline{s})\rangle_{1}\im(\tau)^{l}d\mu(\tau) &= \int_{\Gamma_1\bs \H}\sum_{g\in \Gamma_\infty\bs \Gamma_1}\langle f(g\tau),\frake_0\rangle_1\im(g\tau)^{l+s}d\mu(\tau)\\
     &=\int_{\Gamma_\infty\bs \H}\langle f(\tau),\frake_0\rangle_1\im(\tau)^{l+s-2}dxdy.
   \end{align*}
   Inserting the Fourier expansion of $f_0$ into the last integral, we find
   \begin{align*}
     & \int_0^\infty\int_0^1\sum_{\substack{n\in \Z\\n>0}}a(0,n)e(n\tau)\im(\tau)^{l+s-2}dxdy \\
     &= \int_0^\infty \sum_{\substack{n\in \Z\\n>0}}a(0,n)e^{2\pi n y}y^{l+s-2}dy\left(\int_0^1e^{2\pi i nx}dx\right)\\
     &= 0.
     \end{align*}
\end{proof}

In order to study the analytic properties of the standard zeta function, we express it basically as a Petersson scalar product of $f$ with the restricted Eisenstein series $E_{l,0}^2(\kzxz{\tau}{0}{0}{\zeta},s)$. This approach is weĺl known  and has been applied in several settings, see e. g. \cite{Bo1}, \cite{Ar} or \cite{BM}.

\begin{definition}
  Let $f\in  S_l(\rho_{L,1})$ be a cusp form and $E_{l,0}^{2}$  be the Eisenstein series as in Definition \ref{def:eisenstein_nonholom}. For  $\zeta\in \H$ and $l+2\re(s)>3$ we define the integral 
\begin{equation}\label{eq:scalar_product_eisenstein}
  \sum_{\lambda\in L'/L}\left(\int_{\Gamma_1\bs \H}\langle f(\tau)\otimes \frake_\lambda,E_{l,0}^{2}(\kzxz{\tau}{0}{0}{-\overline{\zeta}},\overline{s})\rangle_{2} \im(\tau)^{l}d\mu(\tau)\right)\frake_\lambda.
\end{equation}
\end{definition}

\begin{remark}\label{rem:rerproducing_integral}
  In view of  Theorem \ref{thm:pullback} we may write the integral in \eqref{eq:scalar_product_eisenstein} in the form
  \begin{equation}\label{eq:reproducing_integral}
    \begin{split}
    & \int_{\Gamma_1\bs \H}\langle f(\tau)\otimes \frake_\lambda,E_{l,0}^{2*}(\kzxz{-\tau}{0}{0}{\overline{\zeta}},\overline{s})\rangle_{2} \im(\tau)^{l}d\mu(\tau) \\
    & =   \int_{\Gamma_1\bs \H}\langle f(\tau)\otimes\frake_\lambda,E_{l,0}^{1*}(-\tau,\overline{s})\otimes E_{l,0}^{1*}(\overline{\zeta},\overline{s})\rangle_2\im(\tau)^ld\mu(\tau)  \\
    &+\frac{e(\sig(L)/8)}{|L'/L|^{1/2}}\sum_{d\in \N}\frac{g_d(L)}{g(L)}d^{-l-2s}\int_{\Gamma_1\bs \H}\langle f(\tau)\otimes\frake_\lambda,\mathscr{P}^+_l(\tau,\overline{\zeta},\kzxz{d}{0}{0}{d^{-1}},\overline{s}\rangle_2\im(\tau)^ld\mu(\tau).
      \end{split}
    \end{equation}
It follows from Remark \ref{rem:convergence_subseries},  \eqref{eq:petersson_scalar_product_poincare_2} and   \eqref{eq:petersson_integral_eisenstein} that the integral in \eqref{eq:scalar_product_eisenstein} is well defined. 
\end{remark}

  The pullback formula \eqref{eq:pullback} combined with Theorem \ref{thm:orthogonality} and Theorem \ref{thm:properties_poincare} give rise to the before mentioned integral formula of the standard zeta function of a common Hecke eigenform $f$. 

\begin{theorem}\label{thm:integral_l_series}
  Let $l\in 2\Z$, $l\ge 3$, satisfy $2l+\sig(L)\equiv 0\bmod{4}$.  Let $f\in S_l(\rho_{L,1})$ and $E_{l,0}^{2}$ the Eisenstein series in Definition \ref{def:eisenstein_nonholom}. If $l+2\re(s)>3$, then, for any $\zeta\in \H$,
  \begin{equation}\label{eq:boecherer_garrett_decomp_int}
    \begin{split}
      & \sum_{\lambda\in L'/L}\left(\int_{\Gamma_1\bs \H}\langle f(\tau)\otimes \frake_\lambda,E_{l,0}^{2}(\kzxz{\tau}{0}{0}{-\overline{\zeta}},\overline{s})\rangle_{2} \im(\tau)^{l}d\mu(\tau)\right)\frake_\lambda \\
      &= K(l,s)\sum_{d\in \N}d^{-l-2s}\left(f\mid_{k,L}T\kzxz{d^2}{0}{0}{1}\right)(\zeta),
    \end{split} 
  \end{equation}
  where $K(l,s)= \frac{e(\sig(L)/8)}{|L'/L|^{1/2}}(-1)^{-s}C(l,s)$ and $C(l,s)$ is specified in \eqref{eq:constant_reproducing}.
  Moreover, if $f$ is common eigenform of all Hecke operators $T\kzxz{d^2}{0}{0}{1}$, the right-hand side of the above identity coincides with
  \begin{equation}\label{eq:boecherer_garrett_decomp_eigen}
    K(l,s)\left(\sum_{d\in \N}\lambda_d(f)d^{-l-2s}\right)f(\zeta).
    \end{equation}
  \end{theorem}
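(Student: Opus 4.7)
The plan is essentially to assemble three ingredients already established in the paper: the Garrett--B\"ocherer pullback formula (Theorem \ref{thm:pullback}), the orthogonality statement (Theorem \ref{thm:orthogonality}), and the reproducing property of the Poincar\'e series (Theorem \ref{thm:properties_poincare}, part ii). The bulk of the work has been done; the present theorem is essentially a clean assembly of these three, glued together by the Hecke relation \eqref{eq:rel_hecke_T_D}.

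First I would rewrite the integrand using the conventions \eqref{eq:eisenstein_series_negative_argument}--\eqref{eq:eisenstein_series_conjugate_argument}: since $\kzxz{\tau}{0}{0}{-\overline{\zeta}}\in\H_2$ is the negative of $\kzxz{-\tau}{0}{0}{\overline{\zeta}}\in\H_2^-$, one has $E_{l,0}^{2}(\kzxz{\tau}{0}{0}{-\overline{\zeta}},\overline{s})=E_{l,0}^{2*}(\kzxz{-\tau}{0}{0}{\overline{\zeta}},\overline{s})$. To this I apply Theorem \ref{thm:pullback} with $(\tau,\zeta,s)\mapsto(-\tau,\overline{\zeta},\overline{s})$, obtaining the sum of $E_{l,0}^{1*}(-\tau,\overline{s})\otimes E_{l,0}^{1*}(\overline{\zeta},\overline{s})$ and a series in $d\ge 1$ whose general term is a scalar multiple of $\mathscr{P}^+_l(\tau,\overline{\zeta},D,\overline{s})$ with $D=\kzxz{d}{0}{0}{d^{-1}}$. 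By Remark \ref{rem:convergence_subseries} this sum is normally convergent in the region $l+2\re(s)>3$, so summation and integration can be exchanged.

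In the resulting decomposition of the left-hand side of \eqref{eq:boecherer_garrett_decomp_int}, Theorem \ref{thm:orthogonality} kills the tensor-product contribution. For each remaining Poincar\'e summand, summing over $\lambda\in L'/L$ and applying Theorem \ref{thm:properties_poincare} (ii) replaces the integral by $(-1)^{-s}C(l,s)(f\mid_{l,L}T(D))(\zeta)$. The relation \eqref{eq:rel_hecke_T_D} in the form $T(D)=\frac{g(L)}{g_d(L)}T\kzxz{d^2}{0}{0}{1}$ then cancels the Gauss sum ratio $\frac{g_d(L)}{g(L)}$ introduced by the pullback and delivers $K(l,s)\sum_d d^{-l-2s}(f\mid_{l,L}T\kzxz{d^2}{0}{0}{1})(\zeta)$, with the eigenform version \eqref{eq:boecherer_garrett_decomp_eigen} following at once from $f\mid_{l,L}T\kzxz{d^2}{0}{0}{1}=\lambda_d(f)f$. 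The one delicate point is the bookkeeping of complex conjugations forced by the sesquilinearity of $\langle\cdot,\cdot\rangle_2$ in its second slot: one has to verify that when the scalar $\frac{e(\sig(L)/8)}{|L'/L|^{1/2}}\frac{g_d(L)}{g(L)}d^{-l-2\overline{s}}$ is pulled out of the pairing, the resulting conjugates recombine to give exactly the constant $K(l,s)$ together with the required power $d^{-l-2s}$, consistently with the informal computation in Remark \ref{rem:rerproducing_integral}.
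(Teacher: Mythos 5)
Your proposal is correct and follows essentially the same route as the paper: rewrite the integrand via the conventions \eqref{eq:eisenstein_series_negative_argument}--\eqref{eq:eisenstein_series_conjugate_argument}, apply the pullback formula of Theorem \ref{thm:pullback} (as organized in Remark \ref{rem:rerproducing_integral}), kill the tensor term with Theorem \ref{thm:orthogonality}, evaluate the Poincar\'e terms with Theorem \ref{thm:properties_poincare}~ii), and convert $T\kzxz{d}{0}{0}{d^{-1}}$ to $T\kzxz{d^2}{0}{0}{1}$ via \eqref{eq:rel_hecke_T_D}. No substantive difference from the paper's argument.
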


\begin{proof}
  This is routine work. We have just to collect the results we established before and put them together.
Equation \eqref{eq:reproducing_integral} in  Remark \ref{rem:rerproducing_integral}  combined with Theorem \ref{thm:orthogonality} and Theorem \ref{thm:properties_poincare} allows us to replace the left-hand side of the above stated identity with
  \[
 \frac{e(\sig(L)/8)}{|L'/L|^{1/2}}(-1)^{-s}C(l,s)\sum_{d\in \N}\frac{g_d(L)}{g(L)}d^{-l-2s}\left(f\mid_{k,L}T\kzxz{d}{0}{0}{d^{-1}}\right)(\zeta).
  \]
  Employing the relation \eqref{eq:rel_hecke_T_D} afterwards, gives the desired result.  
  \end{proof}

We are now in a position to prove a result concerning the analytic properties of $Z(s,f)$. Corollary \ref{cor:functional_eq_E_2} together with \eqref{eq:boecherer_garrett_decomp_int} gives us the means to transfer the analytic properties of $E_{l,0}^2$ to the standard zeta function. We use the same notation as in Corollary \ref{cor:functional_eq_E_2}.

\begin{theorem}\label{thm:analytic_prop_zeta}
  Let $l\in 2\Z,\, l\ge 3$, satisfy $2l+\sig(L)\equiv 0\bmod{4}$ and $f\in S_l(\rho_{L,1})$ a common eigenform of Hecke operators $T\kzxz{d^2}{0}{0}{1}$. Then the Dirichlet series $Z(s,f)$ can be continued meromorphically to the $s$-plane. If additionally $L'/L$ is anisotropic and $|L'/L|$ odd, then
  \[
  \mathscr{Z}(f,s)=K(l,s)Z(f,2s+l) 
  \]
  satisfies the following functional equation
  \begin{equation}\label{eq:functional_equation_zeta}
    \mathscr{Z}(f,s-\frac{l}{2}) = \xi(2s-\frac{3}{2},2.l,P)\prod_{p\mid |L'/L|}\left(C_p(\id, I_0) + C_p(\id,I_0\cup I_1)+ C_p(\sigma,I_0)\right)\mathscr{Z}(f, \frac{3}{2}-s-\frac{l}{2}).
    \end{equation}
\end{theorem}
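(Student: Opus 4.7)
The plan is to derive both claims by transferring the analytic properties of $E_{l,0}^2$ established in Theorem \ref{thm:functional_eq_E_n} and Corollary \ref{cor:functional_eq_E_2} to $\mathscr{Z}(f,s)$ through the integral representation of Theorem \ref{thm:integral_l_series}. Fix $\zeta\in\H$ with $f(\zeta)\neq 0$ and write $I(s,\zeta)$ for the left-hand side of \eqref{eq:boecherer_garrett_decomp_int}, so that Theorem \ref{thm:integral_l_series} gives $I(s,\zeta)=\mathscr{Z}(f,s)\,f(\zeta)$ for $\re(s)>(3-l)/2$. Since $f$ decays rapidly on $\Gamma_1\bs\H$ while the meromorphic continuation of $E_{l,0}^{2}(\cdot,s)$ provided by Theorem \ref{thm:functional_eq_E_n} is of at most polynomial growth in $\im(\tau)$, the integral $I(s,\zeta)$ extends meromorphically to the whole $s$-plane, and dividing by the meromorphic, not identically vanishing factor $K(l,s)f(\zeta)$ yields the claimed meromorphic continuation of $Z(f,\cdot)$ and hence of $\mathscr{Z}(f,\cdot)$.

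For the functional equation, write
\[
\Xi(s)=\xi(2s-\tfrac{3}{2},2,l,P)\prod_{p\mid |L'/L|}\bigl(C_p(\id,I_0)+C_p(\id,I_0\cup I_1)+C_p(\sigma,I_0)\bigr)
\]
for the full scalar factor appearing in \eqref{eq:func_eq_E_2}, so that $E_{l,0}^{2}(\tau,s-l/2)=\Xi(s)\,E_{l,0}^{2}(\tau,3/2-s-l/2)$. Replacing $s$ by $\bar{s}+l/2$ in this identity and inserting the result into the integrand of \eqref{eq:boecherer_garrett_decomp_int} gives
\[
E_{l,0}^{2}\!\left(\kzxz{\tau}{0}{0}{-\overline{\zeta}},\bar{s}\right)=\Xi(\bar{s}+l/2)\,E_{l,0}^{2}\!\left(\kzxz{\tau}{0}{0}{-\overline{\zeta}},\tfrac{3}{2}-\bar{s}-l\right).
\]
Since $\langle\cdot,\cdot\rangle_2$ is conjugate-linear in its second argument, $\overline{\Xi(\bar{s}+l/2)}$ factors out of the integral. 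Setting $\tilde{s}:=3/2-l-s$, so that $\overline{\tilde{s}}=3/2-\bar{s}-l$, the remaining integral is precisely $I(\tilde{s},\zeta)$, which by Theorem \ref{thm:integral_l_series} equals $\mathscr{Z}(f,\tilde{s})\,f(\zeta)$. Cancelling $f(\zeta)$ and then substituting $s\mapsto s-l/2$ yields
\[
\mathscr{Z}(f,s-\tfrac{l}{2})=\overline{\Xi(\bar{s})}\,\mathscr{Z}(f,\tfrac{3}{2}-s-\tfrac{l}{2}),
\]
which is \eqref{eq:functional_equation_zeta} provided that $\overline{\Xi(\bar{s})}=\Xi(s)$.

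The main obstacle is the bookkeeping forced by the complex conjugation in the Petersson product of Theorem \ref{thm:integral_l_series}, together with the verification of the symmetry $\overline{\Xi(\bar{s})}=\Xi(s)$. The latter reduces to Schwarz reflection $\overline{\Gamma(\bar{z})}=\Gamma(z)$ for the archimedean gamma factors in $\xi(2s-\tfrac{3}{2},2,l,P)$, to the reality of the quadratic character $\chi_V(p)\in\{\pm 1\}$ governing the local $L$-factors $a_{2,p}$ and $b_{2,p}$, and to a direct inspection of the explicit constants $C_p$ in \eqref{eq:local_factor_func_eq_1}--\eqref{eq:local_factor_func_eq_2}, all of whose coefficients lie in $\R$. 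Once this symmetry is confirmed, the functional equation drops out of the computation above.
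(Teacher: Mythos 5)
Your proposal is correct and follows the same overall strategy as the paper: both derive the continuation and the functional equation of $\mathscr{Z}(f,\cdot)$ by pushing the analytic properties of $E_{l,0}^{2}$ through the integral representation of Theorem \ref{thm:integral_l_series}. The one genuine divergence is how the complex conjugation built into the Petersson pairing is handled. The paper first rewrites $\overline{E_{l,0}^2(\kzxz{\tau}{0}{0}{-\overline{\zeta}},\overline{s})}$ as $E_{l,0}^{2}(\kzxz{-\overline{\tau}}{0}{0}{\zeta},s)$ using the duality relations \eqref{eq:eisenstein_series_conjugate_argument}, so that the integrand is literally a component of $E_{l,0}^{2}$ evaluated at $s$; the meromorphic continuation is then immediate and the component-wise functional equation \eqref{eq:func_eq_E_2} can be applied in the variable $s$ directly, producing the factor $\Xi(s)$ with no conjugation. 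You instead apply the functional equation at $\overline{s}$ inside the pairing and pull out $\overline{\Xi(\overline{s})}$ by conjugate-linearity, which forces you to verify the reflection symmetry $\overline{\Xi(\overline{s})}=\Xi(s)$. Your reduction of that symmetry to $\overline{\Gamma(\overline{z})}=\Gamma(z)$, the reality of $\chi_{V,p}$, and the reality of the coefficients in \eqref{eq:local_factor_func_eq_1}--\eqref{eq:local_factor_func_eq_2} (all powers of $p$ and the local zeta factors $\zeta_p(as+b)$ having real $a,b$) is correct, so the argument closes; it just carries an extra verification that the paper's substitution sidesteps. Two minor points worth tightening: for the continuation you should note explicitly that $s\mapsto\overline{F(\overline{s})}$ is meromorphic whenever $F$ is (otherwise the meromorphy of the integrand in $s$ is not literally what Theorem \ref{thm:functional_eq_E_n} gives you), and the moderate-growth claim for the continued Eisenstein series, which you invoke to justify interchanging continuation and integration, is used but not proved in the paper either, so you are at parity with it there.
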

\begin{proof}
  First, we note that
  \begin{align*}
    \overline{E_{l,0}^2(\kzxz{\tau}{0}{0}{-\overline{\zeta}},\overline{s})} &= E_{l,0}^{2*}(\kzxz{\overline{\tau}}{0}{0}{-\zeta},s).\\
    &= E_{l,0}^2(\kzxz{-\overline{\tau}}{0}{0}{\zeta},s),
  \end{align*}
  where the last equation is due to \eqref{eq:eisenstein_series_conjugate_argument}.
  Clearly, this identity holds for each component function of $E_{l,0}^{2}$:
  \begin{equation}
    \overline{E_{l,0}^2(\kzxz{\tau}{0}{0}{-\overline{\zeta}},\overline{s})}_\mu = E_{l,0}^{2}(\kzxz{-\overline{\tau}}{0}{0}{\zeta},s)_\mu
  \end{equation}
  for any $\mu\in (L'/L)^2$.
  From this follows the first assertion.

  For the functional equation make use of the fact that \eqref{eq:func_eq_E_2} is valid for each component function of $E_{l,0}^{2}$, which can be immediately read off the proof of Theorem 3.16 of \cite{St} and \cite{BY}, Section 2.2 on pages 641 and 642.
  Taking this into account, we obtain the claimed functional equation.
  \end{proof}


\begin{thebibliography}{MCG}
  \bibitem[AZ]{AZ} A. N. Andrianov and V. G. Zhuravlev, {\it Modular forms and Hecke operators}. Translated form the 1990 Russian original by Neal Koblitz, Transactions of Mathematical Monographs, {\bf 145} (1995)
    \bibitem[Ar]{Ar} T. Arakawa, {\it Jacobi Eisenstein Series and a Basis Problem for Jacobi Forms}, Comment. Math. Univ. St. Paul. {\bf 43}, no 2, 181--216, (1994).
    \bibitem[Bo1]{Bo1} S. B\"ocherer, {\it \"Uber die Funktionalgleichung automorpher $L$-Funktionen zur Siegelschen Modulgruppe}, J. Reine  Angew. Math. {\bf 362}, 146--168, (1985).
      \bibitem[Bor]{Bor}
R. Borcherds, \emph{Automorphic forms with singularities on Grassmannians}, 
Inv. Math. \textbf{132} (1998), 491--562.
      
    \bibitem[BM]{BM} T. Bouganis, J. Marzec, {\it On the analytic properties of the standard $L$-function attached Siegel-Jacobi modular forms}, Dok. Math. {\bf 24}, 2613-2684, (2019).
\bibitem[Br1]{Br1} J. H. Bruinier, {\it Borcherds Products on $O(2,l)$ and Chern Classes of Heegner Divisors}, Lecture notes in mathematics, {\bf 1728}, (2002).
\bibitem[Br2]{Br2} J. H. Bruinier, {\it On the converse theorem for Borcherds Products}, J. Algebra {\bf 397}, 315--342, (2014).
\bibitem[BK]{BK}, J. H. Bruinier, M. Kuss, {\it Eisenstein series attached to lattices and modular forms on orthogonal groups}, Manusripta Math. {\bf 106}, no. 4, 443-459, (2001).
\bibitem[BEF]{BEF} J. H. Bruinier, S. Ehlen, E. Freitag, {\it Lattices with many Borchers products}, Math. Comp. {\bf 85}, no. 300, 1953--1981 (2016).
\bibitem[BS]{BS} J. H. Bruinier and O. Stein, {\it The Weil representation and Hecke operators for vector valued modular forms}, Math. Z. {\bf 264}, 249--270, (2010).
\bibitem[BY]{BY} J. H. Bruinier, T. Yang, {\it Faltings Heights of CM cycles  and derivatives of $L$-functions}, Invent. Math. {\bf 177}, 631--681, (2009).
\bibitem[Fr]{Fr} E. Freitag, {\it Siegelsche Modulfunktionen} Grundlehren der Mathematischen Wissenschaften, Springer-Verlag, (1983).
\bibitem[Ga]{Ga} P. Garret, {\it Pullbacks of Eisenstein series}, Applications in Automorphic Forms of Several Variables, Taniguchi Symposium, Katata 1983, Birkh\"auser, (1984).
\%bibitem[HW]{HW} E. Hewitt, K. A. Ross, {\it Abstract Harmonic Analysis, Vol. I}, Second Edition, Springer-Verlag, (1979).
\bibitem[Ko1]{Ko1} N. Kozima, {\it Garret's pullback fomula for vector-valued Siegel modular forms}, J. Number Theory {\bf 128}, (2008).
\bibitem[Ko2]{Ko2} N. Kozima, {\it Standard $L$-functions attached to vector valued Siegel modular forms}, Kodai Math. J. {\bf 23}, 255--265, (2000).
\bibitem[Le]{Le} J. Lehner, {\it Discontinous groups and automorphic functions},  Mathematical Surveys, Number VIII, American Math. Soc. Providence, (1964).
\bibitem[McG]{McG} W. McGraw, {\it The Rationality of vector valued modular forms associated with the Weil representation}, Math. Ann. {\bf 326}, 105--122, (2003).
\bibitem[Mi]{Mi} T. Miyake, {\it Modular Forms}, Springer-Verlag, New York, (1989).
\bibitem[Mu1]{Mu1} A. Murase, {\it $L$-functions attached to Jacobi forms of degree $n$, Part I. The basic identity}, J. reine angew. Math. {\bf 401}, 122-156 (1989).
\bibitem[Mu2]{Mu2} A. Murase, {\it $L$-functions attached to Jacobi forms of degree $n$, Part II. Functional equation}, Math. Ann. 290, 247--276 (1991).
\bibitem[No]{No} A. Nobs, {\it Die irreduziblen Darstellungen $\GL_2(\Z_p)$, insbesondere $\GL_2(\Z_2)$}, Math. Ann. {\bf 229}, 113--133, (1977).
\bibitem[NW]{NW} A. Nobs, J. Wolfahrt, {\it Die irreduziblen Darstellungen der Gruppen $\SL_2(\Z_p)$, insbesondere $\SL_2(\Z_2)$, II. Teil}, Comment. Math. Helvetici {\bf 39}, 491--526, (1976).
  \bibitem[PSR]{PSR} I. Piatetski-Shapiro, S. Rallis, {\it $L$-functions for Classical Groups}, Lecture Notes in Mathematics, {\bf 1254}, Springer-Verlag, Berlin, (1987).  
  \bibitem[R]{R} B. Runge, {\it Theta Functions and Siegel-Jacobi forms}, Acta Math. {\bf 175}, 165--196, (1995).
\bibitem[RS]{RS} N. Raulf and O. Stein, {\it A trace formula for Hecke operators on vector-valued modular forms}, Glasg. Math. J., {\bf 59}, no 1, 143--165, (2017).
\bibitem[Sch]{Sch} N. Scheithauer, {\it The Weil Rerpresentation of $\SL_2(\Z)$ and Some Applications}, IMNR, {\bf 2009}, no. 8, 1488--1545, (2009).
  \bibitem[Sch1]{Sch1} N. Scheithauer, {\it Some constructions of modular forms for the Weil representation of $\SL_2(\Z)$}, Nagoya Math. J. 
\bibitem[SV]{SV} M. Schwagenscheidt and F. V\"olz, {\it Lifting newforms to vector-valued modular forms for the Weil representation}, Int. J. Number Theory,{\bf 11}, 2199--2219, (2015).
\bibitem[St]{St} O. Stein, {\it Analytic properties of Eisenstein series and standard $L$-functions}, to appear in Nagoya Math. J.
  \bibitem[Wei]{Wei} A. Weil, {\it Sur certains groupes d' operateurs unitaires}, Acta Math. {\bf 111}, 143--211 (1964).
  \bibitem[We]{We} F. Werner, {\it Vector valued Hecke theory}, Ph. D. thesis, Technische Universit\"at Darmstadt, (2014).
    \bibitem[Zh]{Zh} W. Zhang, {\it Modularity of generating functions of special cycles on Shimura varieties}, Ph. D. thesis, Columbia University (2009).
\end{thebibliography}
\end{document}